\documentclass{amsart}

\usepackage[a4paper, margin=.8in]{geometry}

\usepackage[fontsize=13pt]{scrextend}

\usepackage{amsmath, amsthm, thmtools, amsfonts, amssymb}
\usepackage{enumitem}
\usepackage[dvipsnames]{xcolor}
\usepackage[pdfpagelabels, bookmarks, hyperindex, hyperfigures, colorlinks = true, linkcolor = {RoyalBlue}, citecolor = {BrickRed}, urlcolor = {Magenta}]{hyperref}
\usepackage{tikz-cd}

\renewcommand\emph[1]{\textcolor{darkgray}{\textit{#1}}}

\theoremstyle{plain}
\newtheorem{theorem}{Theorem}[section]
\newtheorem*{theorem*}{Theorem}
\newtheorem{theoremX}{Theorem}

\newtheorem{prop}[theorem]{Proposition}
\newtheorem{corr}[theorem]{Corollary}
\newtheorem*{corr*}{Corollary}
\newtheorem{lemma}[theorem]{Lemma}
\newtheorem{sublemma}[theorem]{Sublemma}
\theoremstyle{definition}
\newtheorem{defn}[theorem]{Definition}
\newtheorem{example}[theorem]{Example}
\newtheorem{obs}[theorem]{Observation}
\newtheorem*{obs*}{Observation}

\newtheorem*{question*}{Question}

\newtheorem*{conj*}{Conjecture}
\newtheorem{remark}[theorem]{Remark}

\DeclareMathOperator{\codim}{codim}
\DeclareMathOperator{\rk}{rk}
\DeclareMathOperator{\cork}{cork}
\DeclareMathOperator{\im}{Im}
\DeclareMathOperator{\bs}{bs}

\DeclareMathOperator{\diff}{Diff}
\DeclareMathOperator{\Op}{\mathcal{O}p}
\DeclareMathOperator{\Sym}{Sym}
\DeclareMathOperator{\Sol}{Sol}
\DeclareMathOperator{\Ann}{Ann}

\newcommand{\Id}{\textrm{Id}}
\newcommand{\Span}{\textrm{Span}}

\newcommand{\relHor}{\mathcal{R}^{\text{Hor}}}
\newcommand{\relHorTilde}{\tilde{\mathcal{R}}^{\text{Hor}}}
\newcommand{\relQHor}{\mathcal{R}^{\text{Hor}}}
\newcommand{\relQHorTilde}{\tilde{\mathcal{R}}^{\text{Hor}}}
\newcommand{\relCont}{\mathcal{R}^{\text{Cont}}}
\newcommand{\relContTilde}{\tilde{\mathcal{R}}^{\text{Cont}}}
\newcommand{\relICont}{\mathcal{R}^{\text{IsoCont}}}
\newcommand{\relIContTilde}{\tilde{\mathcal{R}}^{\text{IsoCont}}}

\newcommand{\solHor}{\Phi^{\text{Hor}}}
\newcommand{\solCont}{\Phi^{\text{Cont}}}
\newcommand{\solContTilde}{\tilde{\Phi}^{\text{Cont}}}

\newcommand{\solIContTilde}{\tilde{\Phi}^{\text{IsoCont}}}

\newcommand{\secCont}{\Psi^{\text{Cont}}}
\newcommand{\secContTilde}{\tilde{\Psi}^{\text{Cont}}}

\newcommand{\opCont}{\mathfrak{D}^{\text{Cont}}}
\newcommand{\opContTilde}{\tilde {\mathfrak{D}}^{\text{Cont}}}

\newcommand{\linCont}{\mathfrak{L}^{\text{Cont}}}

\newcommand{\resLinCont}{\mathcal{L}^{\text{Cont}}}

\begin{document}
\allowdisplaybreaks
\title{Existence of Horizontal Immersions in Fat Distributions}

\author[A. Bhowmick]{Aritra Bhowmick}
\address{A. Bhowmick, Department of Mathematics and Statistics, Indian Institute of Science Education and Research, Kolkata\\ Mohanpur, Nadia 741246, West Bengal, India}
\email{avowmix@gmail.com}

\author[M. Datta]{Mahuya Datta}
\address{M. Datta, Statistics and Mathematics Unit, Indian Statistical Institute\\ 203, B.T. Road, Kolkata 700108, India}
\email{mahuya@isical.ac.in}

\subjclass[2010]{58A30, 58D10, 58A20, 58C15}

\keywords{Horizontal immersions, Bracket generating distributions, Fat distributions, Holomorphic contact structures, Quaternionic contact structures, h-principle}

\begin{abstract}
Contact structures, as well as their holomorphic and quaternionic counterparts, are the primary examples of strongly bracket generating (or fat) distributions. In this article, we associate a numerical invariant to corank $2$ fat distribution on manifolds, referred to as \emph{degree} of the distribution. The real distribution underlying a holomorphic contact structure is of degree $2$. Using Gromov's sheaf theoretic and analytic techniques of $h$-principle, we prove the existence of horizontal immersions of an arbitrary manifold into degree $2$ fat distributions and the quaternionic contact structures. We also study immersions of a contact manifold inducing the given contact structure.
\end{abstract}

\maketitle

\frenchspacing

\section{Introduction}\label{sec:intro}
A \emph{distribution} on a manifold $M$ is a subbundle $\mathcal{D}$ of the tangent bundle $TM$. The sections of $\mathcal{D}$ constitute a distinguished subspace $\Gamma(\mathcal{D})$ in the space of all vector fields on $M$. On one end there are \emph{involutive} distributions for which $\Gamma(\mathcal{D})$ is closed under the Lie bracket operation, while at the polar opposite there are \emph{bracket-generating distributions} for which the local sections of $\mathcal{D}$ generate the whole tangent bundle under successive Lie bracket operations. A celebrated theorem due to Chow says that if $\mathcal{D}$ is a bracket-generating distribution on a manifold $M$, then any two points of $M$ can be joined by a $C^\infty$-path horizontal (that is, tangential) to $\mathcal{D}$ \cite{chowBracketGenerating}. This is the starting point of the study of subriemannian geometry. Chow's theorem is clearly not true for involutive distributions since by Frobenius theorem the set of points that can be reached by horizontal paths from a given point is a (integral) submanifold of dimension equal to the rank of $\mathcal{D}$.

Given an arbitrary manifold $\Sigma$ there is a distinguished class of maps $u:\Sigma\to (M,\mathcal{D})$ such that $T\Sigma$ is mapped into $\mathcal{D}$ under the derivative map of $u$. Such maps are called $\mathcal{D}$-horizontal maps or simply horizontal maps. If $u$ is an embedding then the image of $u$ is called a \emph{horizontal submanifold} in $(M,\mathcal{D})$. An immediate question that arises after Chow's theorem is the following: For a given distribution $\mathcal{D}$ on $M$ and a given point $x\in M$, what is the maximum dimension of a (local) horizontal submanifold through $x$? More generally, can we classify $\mathcal{D}$-horizontal immersions (or embeddings) of a given manifold into $(M,\mathcal{D})$ up to homotopy? These questions have been studied in generality by Gromov, and the answers can be given in the language of $h$-principle.

Horizontal immersions of a manifold $\Sigma$ in $(M,\mathcal{D})$ can be realized as solutions to a first order partial differential equation associated with a differential operator $\mathfrak{D}$ defined on $C^\infty(\Sigma, M)$ and taking values in $TM/\mathcal{D}$-valued $1$-forms on $\Sigma$. If $\mathcal{D}$ is globally defined as the common kernel of independent $1$-forms $\lambda^i$ on $M$ for $i=1,\ldots, p$, then the operator can be expressed as
$$\mathfrak{D} : u \mapsto \big(u^*\lambda^1,\ldots,u^*\lambda^p\big).$$ 
This operator is infinitesimally invertible on \emph{$\Omega$-regular} horizontal immersions (\autoref{defn:contOmegaRegular}), where $\Omega$ is the curvature form of $\mathcal{D}$. It follows from an application of the Nash-Gromov Implicit Function Theorem that $\mathcal{D}$ is locally invertible on $\Omega$-regular immersions. An integrable distribution $\mathcal{D}$ has vanishing curvature form; as a consequence, there are no $\Omega$-regular immersions. In order to have an $\Omega$-regular horizontal immersion, it is necessary that $k(p+1)\leq \rk\mathcal{D}$. Gromov proves that for \emph{generic} distribution germs this is also sufficient. Moreover, with sheaf theoretic techniques, he proves the $h$-principle for horizontal immersions satisfying the so-called `overregularity' condition, which demands that $(k+1)(p+1)\leq \rk\mathcal{D}$. Gromov, however, conjectures an $h$-principle for $\Omega$-regular horizontal immersions under the condition $k(p+1)< \rk\mathcal{D}$, the operator being underdetermined in this range. 

Among all the bracket-generating distributions, \emph{contact structures} have been studied most extensively \cite{geigesBook}. These are corank-$1$ distributions on odd-dimensional manifolds, which are maximally non-integrable. In other words, a contact structure $\xi$ is locally described as the kernel of a $1$-form $\alpha$ such that, $\alpha\wedge (d\alpha)^n$ is non-vanishing, where the dimension of the manifold is $2n+1$. Since $d\alpha$ is non-degenerate on $\xi$, the maximal dimension of a horizontal submanifold in $(M,\xi)$ is $n$. The $n$-dimensional horizontal submanifolds of a contact structure are called \emph{Legendrians}. Locally, there are plenty of $n$-dimensional horizontal (Legendrian) submanifolds, due to the existence of Darboux charts. Globally, Legendrian immersions and loose Legendrian embeddings are completely understood in terms of $h$-principle \cite{gromovBook,duchampLegendre,murphyLooseLegendrianThesis}. Any horizontal immersion in a contact structure is $\Omega$-regular by default. Moreover, one does not require the overregularity condition to obtain the $h$-principle.

Beyond the corank-$1$ situation, very few cases are completely understood. \emph{Engel structures}, which are certain rank-$2$ distributions on $4$-dimensional manifolds \cite{engelOriginal}, have been studied in depth in recent years, and the question of existence and classification of horizontal loops in a given Engel structure has been solved \cite{adachiEngelLoops,pinoEngelTangentTransverse,casalsPinoEngelKnot}. We refer to \cite{dAmbraSubbundle} for horizontal immersions in products of contact manifolds. In a recent article \cite{pinoKnotEmbeddingBracketGenerating}, the authors have obtained the complete $h$-principle for a regular class of immersed (and embedded) loops in any equiregular bracket-generating distributions of rank $\ge 3$.

The simplest invariant for distribution germs is given by a pair of integers $(n,p)$ where $n=\dim M$ and $p=\cork \mathcal{D}$. The germs of contact and Engel structures are generic in their respective classes. They also admit local frames generating a finite dimensional Lie algebra structure. The only other distributions that have the same properties are the class of even contact structures and the $1$-dimensional distributions. All of these lie in the range $p(n-p) \le n$. But in the range $p(n-p) > n$, generic distribution germs do not admit local frames which generate finite dimensional Lie algebra, due to the presence of function moduli \cite{montGeneric}.

Contact structures are the simplest kind of \emph{strongly} bracket generating distributions. A distribution $\mathcal{D}$ is called strongly bracket generating if every non-vanishing vector field in $\mathcal{D}$, defined locally around a point $x\in M$, generates the tangent space $T_x M$ by taking Lie bracket with sections of $\mathcal{D}$ only once. Strongly bracket generating distributions are also referred to as \emph{fat distributions} in the literature. In fact, corank-1 fat distributions are the same as the contact ones. Germs of fat distributions in higher corank, are far from being generic \cite{raynerFat}. However, they are interesting in their own right and have been well-studied \cite{geHorizontalCC, montTour}. 

The notion of contact structures can be extended verbatim to complex manifolds (\autoref{exmp:holomorphicContact}). There is also a quaternionic analog of contact structures, which are corank-3  distributions on $(4n+3)$-manifolds (\autoref{exmp:quaternionicContact}). Both these families of distributions satisfy the fatness property. Moreover, these distributions admit local frames generating finite dimensional lie algebras, namely, the complex Heisenberg lie algebra and the quaternionic Heisenberg Lie algebra in place of real Heisenberg algebra for contact structures \cite{montTour}. Hence, they are not generic.\medskip

In this article, we mainly focus on the existence of smooth horizontal immersions in a certain class of corank-$2$ fat distributions, which are referred to here as `degree $2$ fat distributions' (\autoref{defn:degreeOfFatDistribution}) and which include the underlying real bundles of holomorphic contact structures. The main results may be stated as follows.

\begin{theoremX} [\autoref{thm:hPrinHorizImmFatDeg2}, \autoref{thm:existenceHorizImm}] \label{thm:mainTheoremHorizDeg2}
	Let $\mathcal{D}$ be a degree 2 fat distribution on a manifold $M$. Then $\mathcal{D}$-horizontal $\Omega$-regular immersions $\Sigma\to (M,\mathcal{D})$ satisfy the $C^0$-dense $h$-principle provided $\rk \mathcal{D} \ge 4\dim \Sigma$. Furthermore, any map $\Sigma\to M$ can be $C^0$-approximated by an $\Omega$-regular $\mathcal{D}$-horizontal immersion provided $\rk \mathcal{D} \ge \max \{4\dim \Sigma, 5\dim \Sigma - 3\}$.
\end{theoremX}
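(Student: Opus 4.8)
The plan is to deduce both halves of the statement from Gromov's theory of infinitesimally invertible operators, applied to the operator $\frD$ measuring non-horizontality (locally $u\mapsto(u^*\lambda^1,u^*\lambda^2)$, since $\calD$ has corank $2$), and then to resolve the resulting purely formal existence problem by general position. Throughout, the two engines are the Nash--Gromov implicit function theorem (the analytic input) and the sheaf-theoretic flexibility theorem for microflexible invariant sheaves (the topological input).

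\emph{Step 1: infinitesimal inversion in the degree $2$ case.} First I would make the infinitesimal inversion of $\frD$ over $\Omega$-regular horizontal immersions completely explicit and read off the rank hypothesis under which it is defined. The curvature of a degree $2$ corank $2$ fat distribution is a pencil of nondegenerate $2$-forms on $\calD$ controlled by a recursion operator that behaves like a complex structure (this is exactly what singles out the real bundle underlying a holomorphic contact structure). Using this normal form, the linearized equation $D\frD_u(\,\cdot\,)=\psi$ along $\Sigma$ splits and can be solved, and the threshold $\rk\calD\ge 4\dim\Sigma$ is precisely what makes the resulting right inverse a genuine differential operator of bounded order over the open set of $\Omega$-regular immersions. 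Granting this, the Nash--Gromov implicit function theorem shows that $\frD$ is locally invertible near every $\Omega$-regular horizontal immersion, with $C^0$-estimates; equivalently, the sheaf $\solHor$ of $\Omega$-regular $\calD$-horizontal immersions is microflexible.

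\emph{Step 2: from microflexibility to the $h$-principle.} The sheaf $\solHor$ is invariant under the natural $\diff\Sigma$-action, and the underlying relation $\relHor\subset J^1(\Sigma,M)$ of formal horizontal immersions $(f,F)$ with $F\colon T\Sigma\hookrightarrow\calD$ fibrewise injective and $\Omega$-regular is open. By Gromov's flexibility theorem a microflexible $\diff\Sigma$-invariant sheaf is flexible, and one extracts the parametric, relative, $C^0$-dense $h$-principle for $\relHor$ by the usual induction over a handle decomposition of $\Sigma$, the $C^0$-control being a consequence of the local nature of the microflexibility statement. This is \autoref{thm:hPrinHorizImmFatDeg2}.

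\emph{Step 3: existence by general position.} To prove \autoref{thm:existenceHorizImm} it remains, by Step 2, to produce a formal solution over a prescribed $f\colon\Sigma\to M$, i.e. a fibrewise injective, $\Omega$-regular homomorphism $F\colon T\Sigma\to f^*\calD$ (horizontality being automatic once $F$ takes values in the subbundle $\calD$). I would run a transversality argument in the bundle $\mathrm{Hom}(T\Sigma,f^*\calD)\to\Sigma$: the non-injective homomorphisms form a subvariety of fibrewise codimension $\rk\calD-\dim\Sigma+1$, and a codimension count specific to the degree $2$ case shows that the injective-but-not-$\Omega$-regular homomorphisms form a stratum of fibrewise codimension $\rk\calD-4(\dim\Sigma-1)$ (the precise constant being where the degree $2$ hypothesis enters), which exceeds $\dim\Sigma$ exactly when $\rk\calD\ge 5\dim\Sigma-3$. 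Hence under $\rk\calD\ge\max\{4\dim\Sigma,5\dim\Sigma-3\}$ both bad loci have fibrewise codimension $>\dim\Sigma$, so by the Thom transversality theorem a generic section of $\mathrm{Hom}(T\Sigma,f^*\calD)$ avoids them, giving the required formal solution; feeding it into \autoref{thm:hPrinHorizImmFatDeg2} yields an $\Omega$-regular $\calD$-horizontal immersion $C^0$-close to $f$.

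\emph{Main obstacle.} The real work is Step 1: converting the bare infinitesimal invertibility of $\frD$ into a right inverse of controlled order on an explicit open set of $\Omega$-regular immersions, so that the Nash--Gromov scheme converges and globalizes to microflexibility with $C^0$-estimates, and doing so in a way that pins the threshold at $4\dim\Sigma$. This is exactly where the algebra of the degree $2$ curvature pencil must be exploited; by comparison, the singularity/codimension computation in Step 3 and the sheaf-theoretic bookkeeping in Step 2 are comparatively routine uses of known machinery.
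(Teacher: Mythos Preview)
Your proposal has two genuine gaps, and you have misidentified where the real difficulty lies.

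\textbf{Step 2 fails for closed $\Sigma$.} The assertion ``a microflexible $\diff\Sigma$-invariant sheaf is flexible'' is false. Gromov's flexibility theorem (\autoref{thm:mainFlexibilityTheorem}) only yields flexibility of the \emph{restricted} sheaf $\Phi|_{V_0}$ along a submanifold $V_0$ of \emph{positive codimension} that is sharply moved. For open $\Sigma$ one retracts to a lower-dimensional spine and this suffices, but for closed $\Sigma$ the top cells obstruct the handle induction. The paper's actual argument embeds $\Sigma$ as a Legendrian in $\tilde\Sigma=J^1(\Sigma,\bbR)$ with its standard contact structure $\tilde K$, and shows that the relation $\relIContTilde$ of formal $\tilde K$-isocontact immersions is a \emph{microextension} of $\relHor$ (\autoref{defn:microextension}). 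Verifying this microextension property---extending a formal $\Omega$-regular, $\Omega$-isotropic $F:T_x\Sigma\to\calD_y$ to a formal isocontact map on $T_x\tilde\Sigma$---is precisely where the degree $2$ algebra enters (one must find a complement $V'\subset V^{\perp_2}$ to $V^\Omega$ so that $V\oplus V'$ is $\omega^2$-isotropic, using \autoref{prop:fatTupleVAV}), and where the bound $\rk\calD\ge 4\dim\Sigma$ is actually consumed. Your Step~1 is, by contrast, standard: infinitesimal invertibility over $\Omega$-regular maps is immediate from the definition of $\Omega$-regularity, and microflexibility follows from \autoref{thm:microflexibleLocalWHESheafTheorem} without any dimension hypothesis.

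\textbf{Step 3 omits the isotropy condition.} A section of $\relHor$ is not merely an injective $\Omega$-regular $F:T\Sigma\to f^*\calD$; it must also satisfy $F^*\Omega=0$ (condition~(\ref{defn:relCont:3}) of \autoref{defn:relCont} with $K=T\Sigma$). This is a closed, nowhere-generic condition in $\hom(T\Sigma,f^*\calD)$, so a transversality argument cannot produce it. The paper instead computes directly the connectivity of the fibre $R(k)$ of injective, $\Omega$-regular, $\Omega$-isotropic frames (\autoref{lemma:connectivityOfHorizR(k)}) by an induction that again relies on the degree~$2$ structure via \autoref{prop:fatTupleExtension}: one shows $R(k)$ is $(\rk\calD-4k+2)$-connected, whence obstruction theory gives a global section once $\rk\calD-4\dim\Sigma+2\ge\dim\Sigma-1$, i.e.\ $\rk\calD\ge 5\dim\Sigma-3$.
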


The existence of a horizontal immersion follows from the $h$-principle by solving the algebraic problem. An important corollary to the above result is the $h$-principle for loops in arbitrary corank $2$ fat distribution on a $6$-dimensional manifold (\autoref{corr:hPrinFat46}). This corollary also follows from \cite{pinoKnotEmbeddingBracketGenerating}.

We would like to emphasize that the $h$-principle obtained in \autoref{thm:mainTheoremHorizDeg2} is in the optimal range (\autoref{rmk:horizontalOptimalRange}). However, this does not rule out horizontal immersions in $(M, \mathcal{D})$ below the critical dimension if $\Omega$-regularity condition is dropped. In \cite{bhowmickFat46}, it has been observed that there are certain corank-$2$ fat distribution germs on $\mathbb{R}^6$ which allow germs of $2$-dimensional submanifolds tangent to the given distribution (\autoref{rmk:existenceGermHorizDeg2}). It may also be pertinent to mention here that holomorphic Legendrian embeddings of an open Riemann surface into $\mathbb{C}^3$, with the standard holomorphic contact structure, satisfy the Oka’s principle \cite{forstnericLegendrianCurves,forstnericHoloLegendrianCurves}.\medskip

More generally, we can consider immersions $u : \Sigma \to M$  which induce a specific distribution $K$ on the domain, i.e., $K = du^{-1}\mathcal{D}$. These are called $K$-isocontact immersions. The $h$-principle in \autoref{thm:mainTheoremHorizDeg2} is derived as a consequence of the following.

\begin{theoremX} [\autoref{thm:hPrinIsocontactDeg2}, \autoref{thm:existenceIsocontact}] \label{thm:mainTheoremIsoContDeg2}
	Let $K$ be a given contact structure on $\Sigma$ and $\mathcal{D}$ be a degree $2$ fat distribution on $M$. Then, $K$-isocontact immersions $\Sigma \to M$ satisfy the $C^0$-dense $h$-principle provided $\rk \mathcal{D} \ge 2\rk K + 4$. Furthermore, if $K,\mathcal{D}$ are cotrivial, then any map $\Sigma \to M$ can be $C^0$-approximated by a $K$-isocontact immersion provided $\rk \mathcal{D}\ge \max \{2 \rk K + 4, 3\rk K - 2\}$.
\end{theoremX}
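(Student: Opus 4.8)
The plan is to run Gromov's combined analytic and sheaf-theoretic scheme for horizontal immersions, but with the ambient symmetry now coming from the contact structure $K$ on the \emph{source} rather than from $(M,\calD)$. I would first encode $K$-isocontact immersions as the holonomic solutions of the first-order relation $\relICont\subset J^1(\Sigma,M)$ consisting of those $1$-jets $(x,y,L)$ for which $L$ is injective and $L^{-1}(\calD_y)=K_x$. Writing $\calD$ locally as $\ker(\lambda^1,\lambda^2)$ and $K=\ker\alpha$, membership in $\relICont$ is the closed condition that $\opCont(u):=(u^*\lambda^1,u^*\lambda^2)$ vanish on $K$, packaged by the operator $\opCont$, together with the open conditions that $u$ be an immersion and that $(u^*\lambda^1,u^*\lambda^2)$ not vanish on $T\Sigma/K$. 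By the discussion around \autoref{defn:contOmegaRegular}, $\opCont$ is infinitesimally invertible over $\Omega$-regular maps; the first point to establish is that, for a degree $2$ fat $\calD$, this infinitesimal inversion -- and hence the Nash--Gromov right inverse -- survives in the optimal range $\rk\calD\ge 2\rk K+4$. This is exactly where the degree hypothesis is used: it controls the rank of the $\Omega$-dependent linear system one has to invert and ensures that inverting it does not consume more than the available corank $2$.

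Granting that, the first assertion is obtained by the usual packaging. The Nash--Gromov implicit function theorem, applied over $\Omega$-regular solutions, furnishes a \emph{local} right inverse of $\opCont$, which is a genuine support-non-increasing operator; it follows that the sheaf $\solICont$ of $\Omega$-regular $K$-isocontact immersions over $\Sigma$ is microflexible. Moreover $\solICont$ is invariant under the pseudogroup of local contactomorphisms of $(\Sigma,K)$, since precomposing a $K$-isocontact immersion with such a contactomorphism again yields one, and this pseudogroup is sharply moving: a contact vector field may be prescribed arbitrarily at a point through its contact Hamiltonian, away from any given compact set. Gromov's theorem that a microflexible sheaf invariant under a sharply moving pseudogroup is flexible then applies, and flexibility of $\solICont$ yields the parametric, relative and $C^0$-dense $h$-principle over an arbitrary manifold $\Sigma$. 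It is this source symmetry that removes any need for $(M,\calD)$ to be homogeneous, which is the reason the theorem is phrased for isocontact immersions in the first place.

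For the second assertion I must produce, over a prescribed map $f\colon\Sigma\to M$, a formal $\Omega$-regular isocontact solution. Pulling everything back along $f$, this is the bundle-theoretic problem of finding a fibrewise injective $\varphi\colon T\Sigma\to f^*TM$ with $\varphi(K)\subset f^*\calD$, $\varphi^{-1}(f^*\calD)=K$, and $\varphi$ $\Omega$-regular. I would build $\varphi$ from its restriction $K\to f^*\calD$ and its transverse part $T\Sigma/K\to f^*(TM/\calD)$ -- a line bundle mapping into a rank $2$ bundle -- and assemble the pieces by obstruction theory over $\Sigma$, which has dimension $\rk K+1$. Cotriviality of $K$ and $\calD$ trivializes $T\Sigma/K$ and $TM/\calD$, so every obstruction reduces to the connectivity of the pointwise space of $\Omega$-regular injective morphisms with the prescribed kernel; a computation specific to the degree $2$ case should show this model space is $(\rk K)$-connected precisely when $\rk\calD\ge 3\rk K-2$, the remaining bound $\rk\calD\ge 2\rk K+4$ being inherited from the $h$-principle. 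Feeding the resulting formal solution into the first assertion produces a $K$-isocontact immersion $C^0$-close to $f$, and the parametric form of the same argument handles continuous families of maps.

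The step I expect to be the main obstacle is establishing the infinitesimal invertibility of $\opCont$ over $\Omega$-regular $K$-isocontact immersions \emph{with a tame right inverse in the sharp range} $\rk\calD\ge 2\rk K+4$; this is the quantitative content of the assertion that a degree $2$ fat distribution is efficient and does not waste corank, and it is where the new invariant does real work rather than bookkeeping. A second, lesser difficulty is the exact connectivity estimate for the pointwise $\Omega$-regular model in the cotrivial case, which again has to be carried out degree-$2$-specifically in order to land exactly on $3\rk K-2$. Once microflexibility and the contactomorphism symmetry are in place, the sheaf-theoretic globalization is a routine application of Gromov's flexibility theorem.
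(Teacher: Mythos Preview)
Your plan contains a genuine gap in the globalization step. You want to conclude that $\solICont$ is flexible on all of $\Sigma$ from microflexibility plus invariance under the contactomorphism pseudogroup of $(\Sigma,K)$. But Gromov's Flexibility Theorem (\autoref{thm:mainFlexibilityTheorem}) does \emph{not} say this: it says that if a microflexible sheaf $\Phi$ on $V$ is invariant under diffeotopies that sharply move a \emph{positive-codimension} submanifold $V_0\subset V$, then the \emph{restriction} $\Phi|_{V_0}$ is flexible. Contactomorphisms of $(\Sigma,K)$ sharply move any proper submanifold of $\Sigma$, so you get the $h$-principle for $\relICont$ near any $V_0\subsetneq\Sigma$ (this is exactly \autoref{thm:hPrinContOpenAlpha}), but you cannot take $V_0=\Sigma$. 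To reach a closed $\Sigma$ you must embed it as a positive-codimension submanifold of some auxiliary $(\tilde\Sigma,\tilde K)$ and work with the extended relation $\relIContTilde$ there. The paper takes $(\tilde\Sigma,\tilde K)=(\Sigma\times\bbR,\,K\times\bbR)$; the crux is then \emph{not} a sharper infinitesimal inversion, but showing that $\relIContTilde$ is a \emph{microextension} of $\relICont$, i.e.\ that every formal section of $\relICont$ over a contractible chart lifts to one of $\relIContTilde$. This is precisely where both the rank bound $\rk\calD\ge 2\rk K+4$ and the degree~$2$ hypothesis enter: one needs a vector $\tau\in V^\Omega\setminus (V^\Omega)^\Omega$ (the dimension count uses $\rk\calD\ge 2\rk K+4$, and the disjointness $V^\Omega\cap(V^\Omega)^\Omega=0$ uses degree~$2$ via \autoref{prop:fatTupleIsoContact}) so that $V'=V+\langle\tau\rangle$ is again $\Omega$-regular (\autoref{prop:fatTupleExtension}, again degree~$2$).

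Relatedly, you have mislocated the role of the numerical hypothesis. Infinitesimal invertibility of $\opCont$ over $\Omega$-regular maps is general and needs no rank bound; and for $K$ contact, every formal $K$-isocontact jet satisfying the curvature condition is automatically $\Omega$-regular (\autoref{prop:isocontactIsOmegaRegular}), independently of $\rk\calD$. So there is no ``sharp-range tame right inverse'' issue to resolve; the bound $2\rk K+4$ is consumed entirely by the microextension step above. Your treatment of the second assertion via obstruction theory is on the right track and matches the paper's (\autoref{lemma:connectivityOfR(k)IsocontactDeg2Fat}, giving $d-4k+2$ connectivity of the model fibre, whence the bound $3\rk K-2$), but it only becomes usable once the $h$-principle itself has been established by the microextension route.
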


We also obtain $h$-principles for horizontal and isocontact immersions into quaternionic contact distributions.

\begin{theoremX} [\autoref{thm:hPrinHorizImmQCont}, \autoref{thm:existenceHorizImmQCont}] \label{thm:mainTheoremHorizQuat}
	Let $\mathcal{D}$ be a quaternionic contact structure on $M$. Then, $\mathcal{D}$-horizontal immersions $\Sigma\to M$ satisfy the $C^0$-dense $h$-principle provided $\rk \mathcal{D} \ge 4\dim\Sigma + 4$. Furthermore, any map $\Sigma\to M$ can be $C^0$-approximated by a $\mathcal{D}$-horizontal immersion provided $\rk \mathcal{D} \ge \max\{4 \dim \Sigma + 4, 5\dim\Sigma - 3\}$.
\end{theoremX}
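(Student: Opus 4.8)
The argument follows the same two-stage pattern as the proofs of \autoref{thm:mainTheoremHorizDeg2} and \autoref{thm:mainTheoremIsoContDeg2}. Set $k=\dim\Sigma$ and $4n=\rk\calD$. Stage one is the $h$-principle \autoref{thm:hPrinHorizImmQCont}, which trades the analytic problem of producing $\calD$-horizontal immersions for the homotopy-theoretic problem of producing formal ones; stage two, \autoref{thm:existenceHorizImmQCont}, resolves the latter by obstruction theory. Throughout, a quaternionic contact structure is written locally as $\calD=\ker\lambda^1\cap\ker\lambda^2\cap\ker\lambda^3$ for pointwise independent $1$-forms, so that a map $u\colon\Sigma\to M$ is $\calD$-horizontal exactly when $\frD(u):=(u^*\lambda^1,u^*\lambda^2,u^*\lambda^3)=0$.

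For stage one, the crucial analytic input is that, a quaternionic contact structure being fat, the operator $\frD$ is infinitesimally invertible over the open set of $\Omega$-regular horizontal immersions (\autoref{defn:contOmegaRegular}); this is the quaternionic counterpart of the contact and degree-$2$ verifications, carried out in the quaternionic Heisenberg frame in which $\Omega$ is the standard model curvature form. The Nash--Gromov implicit function theorem then makes $\frD$ locally invertible, so the sheaf of $\Omega$-regular $\calD$-horizontal immersions is microflexible; being moreover $\diff(M)$-invariant, it is flexible by Gromov's sheaf-theoretic theorem, which gives the $C^0$-dense $h$-principle for $\Omega$-regular horizontal immersions with no overregularity hypothesis. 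To pass from $\Omega$-regular to arbitrary horizontal immersions, one examines the subset of horizontal monomorphisms of $\bbR^k$ into the model $(\bbR^{4n},\Omega)$ that fail to be $\Omega$-regular: a dimension count shows that, once $\rk\calD\ge 4k+4$, the complement of this subset is connected enough that any formal horizontal solution over $\Sigma$, together with any homotopy between two of them, can be pushed into the $\Omega$-regular locus and there realized by a genuine horizontal immersion. This establishes \autoref{thm:hPrinHorizImmQCont}.

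For stage two, by the $h$-principle it suffices to construct, near an arbitrary $f\colon\Sigma\to M$, a fiberwise monomorphism $F\colon T\Sigma\to f^*\calD$ that is fiberwise $\Omega$-regular; such $F$ are the sections of a bundle over $\Sigma$ whose fiber is the space of $\Omega$-regular horizontal monomorphisms of $\bbR^k$ into $(\bbR^{4n},\Omega)$. A dimension count --- in which the three curvature $2$-forms of the quaternionic Heisenberg model take over the role played by the single symplectic form in the contact case --- shows that this fiber is nonempty and sufficiently highly connected for a section to exist over $\Sigma$ exactly when $\rk\calD\ge\max\{4k+4,\,5k-3\}$. Together with \autoref{thm:hPrinHorizImmQCont}, this yields \autoref{thm:existenceHorizImmQCont}, and hence the second assertion of the theorem.

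The heart of the matter --- and the only place where the quaternionic case genuinely diverges from the contact and degree-$2$ analyses --- is the stratification of the $1$-jet space of a quaternionic contact structure by $\Omega$-regularity: pinning down the non-$\Omega$-regular stratum and computing its codimension is exactly what produces the two bounds $4\dim\Sigma+4$ and $5\dim\Sigma-3$, and I expect it to require the most care. A subsidiary technical point is to verify microflexibility with enough uniformity near the frontier of the $\Omega$-regular locus that the sheaf-theoretic globalization goes through for closed $\Sigma$ as well as for open ones.
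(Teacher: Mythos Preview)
Your stage-one argument has two genuine gaps. First, you overlook Pansu's observation (\autoref{prop:qContOmegaRegular}) that for a quaternionic contact structure \emph{every} $\Omega$-isotropic subspace of $\calD$ is automatically $\Omega$-regular. Hence there is no non-$\Omega$-regular stratum to avoid inside $\relHor$, and the bound $\rk\calD\ge 4\dim\Sigma+4$ does not come from any such stratification. Second, microflexibility plus $\diff$-invariance does \emph{not} give flexibility of the solution sheaf on a closed $\Sigma$ (and the relevant group is $\diff(\Sigma)$, not $\diff(M)$); that route only yields the $h$-principle near a positive-codimension submanifold. The paper instead uses the microextension technique (\autoref{thm:hPrinGenExtn}): one embeds $\Sigma$ in $\tilde\Sigma=\Sigma\times\bbR$ and must show that any formal horizontal $F:T_x\Sigma\to\calD_y$ extends to a formal horizontal $\tilde F:T_x\Sigma\oplus\bbR\to\calD_y$. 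Writing $V=\im F$, this means choosing $\tau\in V^\Omega\setminus V$, and since $\codim V^\Omega=3\dim V$ one needs $\rk\calD-4\dim\Sigma>0$, equivalently $\rk\calD\ge 4\dim\Sigma+4$ because $\rk\calD$ is divisible by $4$. That is the actual origin of the bound.

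Your stage-two outline is closer to the paper, but two points need fixing. The formal data $F:T\Sigma\to f^*\calD$ must be $\Omega$-\emph{isotropic} (that is the horizontality condition in the relation $\relHor$); $\Omega$-regularity then comes for free by Pansu. And the two numerical constraints play different roles: the connectivity of the fibre $R(k)$ is $\rk\calD-4k+2$, so obstruction theory produces a section once $\rk\calD\ge 5k-3$; the separate hypothesis $\rk\calD\ge 4k+4$ is then needed only to invoke the $h$-principle from stage one. Your closing paragraph about ``the frontier of the $\Omega$-regular locus'' is therefore a non-issue here.
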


\begin{theoremX} [\autoref{thm:hPrinIsocontactQCont}, \autoref{thm:existenceIsocontactQCont}] \label{thm:mainTheoremIsocontactQCont}
	Let $K$ be a given contact structure on $\Sigma$ and $\mathcal{D}$ be a quaternionic contact structure on $M$. Then, $\Omega$-regular $K$-isocontact immersions $\Sigma \to M$ satisfy the $C^0$-dense $h$-principle provided $\rk \mathcal{D} \ge 4\rk K + 4$. Furthermore, if $K,\mathcal{D}$ are cotrivial, then any map $\Sigma \to M$ can be $C^0$-approximated by an $\Omega$-regular, $K$-isocontact immersion provided $\rk \mathcal{D}\ge \max \{4 \rk K + 4, 6\rk K - 2\}$.
\end{theoremX}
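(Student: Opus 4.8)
The plan is to establish \autoref{thm:hPrinIsocontactQCont} and \autoref{thm:existenceIsocontactQCont} and to recover \autoref{thm:mainTheoremHorizQuat} from them by the reduction from isocontact to horizontal immersions already used for degree $2$ fat distributions. Throughout one works locally with defining $1$-forms $\lambda^1,\lambda^2,\lambda^3$ for the quaternionic contact structure $\calD$ and with the associated first order operator $\frD\colon u\mapsto(u^*\lambda^1,u^*\lambda^2,u^*\lambda^3)$, valued in $(TM/\calD)$-valued $1$-forms on $\Sigma$. For a $K$-isocontact immersion $u$ one has $du^{-1}\calD=K$, so $\frD(u)$ vanishes on $K$ and its remaining part records the monomorphism $du(T\Sigma)/du(K)\hookrightarrow TM/\calD$. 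The first step is to check that at an $\Omega$-regular $K$-isocontact immersion the linearization of $\frD$ has a right inverse of bounded order, depending differentially on the data. This is where $\rk\calD\ge 4\rk K+4$ enters: in the quaternionic Heisenberg model $\Omega$ is the triple of fundamental $2$-forms of the quaternion-Hermitian structure on $\calD$, and once the quaternionic rank of $\calD$ exceeds $\rk K$ by at least one, the images of a frame of $K$ together with their three imaginary-unit rotates span a subbundle on which the linearized system of the three pulled-back forms can be solved pointwise and uniformly in $u$; this is the quaternionic analogue of \autoref{defn:contOmegaRegular} and of the corresponding verification for \autoref{thm:mainTheoremIsoContDeg2}.

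Granting infinitesimal invertibility, the Nash--Gromov implicit function theorem yields local inverses of $\frD$ near any $\Omega$-regular $K$-isocontact immersion, hence microflexibility of the sheaf $\Phi$ of such immersions over $\Sigma$. The $C^0$-dense parametric $h$-principle then follows from Gromov's theorem that a microflexible sheaf invariant under a transitive pseudogroup of source diffeomorphisms --- here the pseudogroup of local contactomorphisms of $(\Sigma,K)$, transitive by Darboux's theorem --- is flexible over open manifolds, the case of closed $\Sigma$ being reduced to the open case by the standard induction over a handle decomposition relative to a Darboux chart. This step is formally the same as for degree $2$ fat distributions and needs no new input beyond the quaternionic form of $\Omega$-regularity.

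It remains to produce formal solutions: bundle monomorphisms $F\colon T\Sigma\to TM$ covering a continuous map $\Sigma\to M$, with $F^{-1}\calD=K$, such that the three pulled-back fundamental forms of $\calD$ restricted to $K$ are proportional to the Levi form of $K$ and such that the pointwise $\Omega$-regularity inequalities hold. When $K$ and $\calD$ are cotrivial the bundles $T\Sigma/K$ and $TM/\calD$ are trivial, so after fixing trivializations this becomes an obstruction problem for sections of a bundle over $\Sigma$ whose fibre is the variety $X$ of pointwise formal solutions --- certain $(\rk K+1)$-frames in $\bbR^{4m}=\bbH^{m}$, together with the transverse $\mathrm{Im}\,\bbH$ directions, meeting the proportionality and $\Omega$-regularity constraints, where $\rk\calD=4m$. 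The proportionality and regularity conditions lower the connectivity of the ambient quaternionic Stiefel manifold by $O(\rk K)$, and one shows that $X$ is at least $(\dim\Sigma-1)$-connected exactly when $4m\ge 6\rk K-2$; since $\dim\Sigma=\rk K+1$, all obstructions in $H^{q+1}(\Sigma;\pi_q X)$ with $q\le\dim\Sigma-1$ then vanish and a section exists. Combined with the $h$-principle this gives the $C^0$-approximation statement, and \autoref{thm:mainTheoremHorizQuat} follows by the isocontact-to-horizontal reduction.

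I expect the genuinely delicate part to be the interaction between the corank $3$ of $\calD$ and the quaternionic structure at the two places where the constants are produced. First, the infinitesimal invertibility computation in the quaternionic Heisenberg model: there are now three independent curvature forms rather than one complex form, and one must verify that the quaternion structure constants do not degenerate the linearized operator below the rank predicted by the naive count --- this is the origin of $\rk\calD\ge 4\rk K+4$, versus $2\rk K+4$ in the complex case. Second, and in the same spirit, the sharp connectivity of $X$: the jump of the existence threshold from $3\rk K-2$ to $6\rk K-2$ reflects the doubling of the isotropy-type constraints, and getting the connectivity estimate to be exactly this strong is the computational heart of the argument. Everything else is a transcription of Gromov's analytic $h$-principle machinery already assembled for degree $2$ fat distributions.
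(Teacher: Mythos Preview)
Your overall architecture is right --- infinitesimal invertibility, microflexibility, flexibility via sharply moving contactomorphisms, and an obstruction computation for formal solutions --- but you misplace the role of the bound $\rk\calD\ge 4\rk K+4$, and this hides the step where the actual work happens.

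Infinitesimal invertibility at an $\Omega$-regular $K$-isocontact immersion does \emph{not} require any numerical inequality on $\rk\calD$; it is precisely the content of $\Omega$-regularity (\autoref{defn:contOmegaRegular}) and holds by definition on the relation $\relICont$. What the inequality is for is the \emph{microextension} step. The Flexibility Theorem only gives flexibility of the solution sheaf near a positive-codimension submanifold, so to get the $h$-principle on a closed $\Sigma$ one embeds $(\Sigma,K)$ into $(\tilde\Sigma,\tilde K)=(\Sigma\times\bbR,\,K\times\bbR)$ and must verify that every formal jet $(x,y,F)\in\relICont$ extends to a jet in $\relIContTilde$ --- i.e., that $F(K_x)$ can be enlarged by one direction inside $\calD_y$ while remaining $\Omega$-regular and satisfying the curvature condition. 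The paper does this by a quaternionic linear-algebraic computation: writing $V=F(K_x)$, one shows $V^\Omega\cap(V+\sum_iJ_iV)$ has dimension $\dim V$, so $\rk\calD\ge 4\rk K+4$ guarantees a vector $\tau\in V^\Omega\setminus(V+\sum_iJ_iV)$; then one checks $V+\langle\tau\rangle$ is still $\Omega$-regular using the quaternionic relations. Your ``standard induction over a handle decomposition relative to a Darboux chart'' does not substitute for this: the passage from positive-codimension flexibility to the closed case is exactly the microextension hypothesis of \autoref{thm:hPrinGenExtn}, and verifying it is where the constant comes from.

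For the existence half your outline is closer to the paper's, though the formal fibre is not a space of $(\rk K+1)$-frames: one first fixes the quotient monomorphism $G:T\Sigma/K\hookrightarrow TM/\calD$ (possible by cotriviality) and then the fibre is the space $R(k)$ of $2k$-frames in $\calD_y$ spanning a subspace that is $\omega^1$-symplectic, $\omega^2,\omega^3$-isotropic and $\Omega$-regular. The paper proves $R(k)$ is $(\rk\calD-4\rk K+2)$-connected by an inductive Stiefel-type argument (\autoref{lemma:connectivityOfIsocontactQContR(k)}), and the bound $\rk\calD\ge 6\rk K-2$ is what makes this at least $\dim\Sigma-1$. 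Finally, \autoref{thm:mainTheoremHorizQuat} is not obtained in the paper as a corollary of the isocontact theorem; it is proved directly via the extension $(\tilde\Sigma,\tilde K)=(\Sigma\times\bbR,\,T\Sigma\times\bbR)$, using Pansu's observation that $\Omega$-isotropic subspaces of a quaternionic contact structure are automatically $\Omega$-regular.
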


The results follow from the general theory of $h$-principle by applying the sheaf theoretic and analytic techniques. The article is organized as follows. In \autoref{sec:generalHPrinciple}, we recall briefly the language and the main theorems of $h$-principle to keep the article self-contained. Then, in \autoref{sec:revisitKContact} we discuss in detail the $h$-principle of $\Omega$-regular $K$-contact immersions and revisit Gromov's Approximation Theorem for overregular immersions. Next, in \autoref{sec:fatAndDegree} we introduce the notion of `degree' on corank-$2$ fat distributions and study their algebraic properties. Finally, in \autoref{sec:application} we apply the general results of \autoref{sec:revisitKContact} to prove the main theorems, and then discuss some implications of these theorems in symplectic geometry.

\section{Preliminaries of  $h$-principle}
\label{sec:generalHPrinciple}
In this section, we briefly recall certain techniques in the theory of $h$-principle. We refer to \cite{gromovBook} for a detailed discussion of this theory.\medskip

Let $p:X\to V$ be a smooth fibration and $X^{(r)}\to V$ be the $r$-jet bundle associated with $p$. The space $\Gamma X$ consisting of smooth sections of $X$ has the $C^\infty$-compact open topology, whereas $\Gamma X^{(r)}$ has the $C^0$-compact open topology. Any differential condition on sections of the fibration defines a subset in the jet space $X^{(r)}$, for some integer $r \ge 0$. Hence, in the language of $h$-principle, a \emph{differential relation} is by definition a subset $\mathcal{R}\subset X^{(r)}$, for some $r\geq 0$. A section $x$ of $X$ is said to be a \emph{solution} of the differential relation $\mathcal{R}$ if its $r$-jet prolongation $j^r_x : V \to X^{(r)}$ maps $V$ into $\mathcal{R}$. Let $\Sol \mathcal{R}$ denote the space of smooth solutions of $\mathcal{R}$ and let $\Gamma \mathcal{R}$ denote the space of sections of the jet bundle $X^{(r)}$ having their images in $\mathcal{R}$. The $r$-jet map then takes $\Sol \mathcal{R}$ into $\Gamma \mathcal{R}$; in fact, this is an injective map, so that $\Sol \mathcal{R}$ may be viewed as a subset of $\Gamma \mathcal{R}$. Any section in the image of this map is called a \emph{holonomic} section of $\mathcal{R}$.

\begin{defn}
	If every section of $\mathcal{R}$ can be homotoped to a solution of $\mathcal{R}$ then we say that $\mathcal{R}$ satisfies the \emph{ordinary $h$-principle} (or simply, \emph{$h$-principle}).
\end{defn}

\begin{defn}
	We say that $\mathcal{R}$ satisfies the \emph{parametric $h$-principle} if $j^r:\Sol \mathcal{R}\to \Gamma \mathcal{R}$ is a weak homotopy equivalence; this means that the solution space of $\mathcal{R}$ is classified by the space $\Gamma \mathcal{R}$. $\mathcal{R}$ satisfies the \emph{local} (parametric) $h$-principle if $j^r$ is a local weak homotopy equivalence.
\end{defn}

\begin{defn}
	$\mathcal{R}$ is said to satisfy the \emph{$C^0$-dense} $h$-principle if for every $F_0 \in \Gamma\mathcal{R}$ with base map $f_0 = \bs F_0$ and for any neighborhood $U$ of $\im f_0$ in $X$, there exists a homotopy $F_t \in \Gamma\mathcal{R}$ joining $F_0$ to a holonomic $F_1 = j^r_{f_1}$ such that the base map $f_t = \bs F_t$ satisfies $\im f_t \subset U$ for all $t\in [0,1]$.
\end{defn}

We shall now state the main results of the sheaf technique and analytic technique, the combination of which gives the global $h$-principle for many interesting relations, including closed relations arising from partial differential equations.

\subsection{Sheaf Technique in $h$-Principle}\label{sec:generalHPrinciple:sheaf}
We begin with some terminology of topological sheaves $\Phi$ on a manifold $V$. For any arbitrary set $C\subset V$, we denote by $\Phi(C)$ the collection of sections of $\Phi$ defined on some arbitrary open neighborhood $\Op C$.
\begin{defn}\label{defn:sheafMicroFlexible}
	A topological sheaf $\Phi$ is called \emph{flexible} (resp. \emph{microflexible}) if for every pair of compact sets $A\subset B\subset V$, the restriction map $\rho_{B,A}:\Phi(B)\to\Phi(A)$ is a Serre fibration (resp. microfibration). Recall that $\rho_{B,A}$ is a  microfibration if every homotopy lifting problem $(F,\tilde{F}_0)$, where $F:P\times I\to \Phi(A)$ and $\tilde{F}_0:P\to \Phi(B)$ are (quasi)continuous maps, admits a partial lift $\tilde{F}:P\times [0,\varepsilon]\to \Phi(B)$ for some $\varepsilon>0$.
\end{defn}

\begin{defn}
	Given two sheaves $\Phi,\Psi$ on $V$, a sheaf morphism $\alpha : \Phi \to \Psi$ is called a \emph{weak homotopy equivalence} if, for each open $U\subset V$, $\alpha(U):\Phi(U)\to \Psi(U)$ is a weak homotopy equivalence. The map $\alpha$ is a \emph{local} weak homotopy equivalence if, for each $v\in V$, the induced map $\alpha_v : \Phi(v) \to \Psi(v)$ on the stalk is a weak homotopy equivalence.
\end{defn}

We now quote a general result from the theory of topological sheaves.

\begin{theorem}[Sheaf Homomorphism Theorem]\label{thm:sheafHomoTheorem} \cite[pg. 77]{gromovBook}
	Every local weak homotopy equivalence $\alpha:\Phi\to\Psi$ between flexible sheaves $\Phi,\Psi$ is a weak homotopy equivalence.
\end{theorem}

Now, suppose $\Phi$ is the sheaf of solutions of a relation $\mathcal{R}\subset X^{(r)}$, and $\Psi$ is the sheaf of sections of $\mathcal{R}$. Then we have the obvious sheaf homomorphism given by the $r$-jet map, $J = j^r :\Phi \to \Psi$. In this case, the sheaf $\Psi$ is always flexible. Hence, if $\Phi$ is flexible and $J$ is a local weak homotopy equivalence, then the relation $\mathcal{R}$ satisfies the parametric $h$-principle. But in general, $\Phi$ fails to be flexible, though the solution sheaves for many relations do satisfy the micro-flexibility property. The following theorem gives a sufficient condition for the flexibility of a solution sheaf when restricted to a submanifold of positive codimension. For any manifold $V$, let $\diff(V)$ denote the pseudogroup of local diffeomorphisms of $V$.

\begin{theorem}[Flexibility Theorem]\label{thm:mainFlexibilityTheorem} \cite[pg. 78]{gromovBook}
	Let $\Phi$ be a microflexible sheaf and $V_0 \subset V$ be a submanifold of positive codimension. If $\Phi$ is invariant under the action of certain subset of $\diff(V)$ which sharply moves $V_0$, then the restriction sheaf $\Phi|_{V_0}$ is flexible. (That is, for any compact sets $A, B \subset V_0$ with $A\subset B$, the restriction map $\rho_{B,A}$ is a fibration.)
\end{theorem}

We refer to \cite[pg. 82]{gromovBook} for the definition of sharply moving diffeotopies and also to \cite[pg. 139]{eliashbergBook} for the related notion of capacious subgroups.
\begin{example}\label{exmp:sharplyMovingDiffeo}
	We mention two important classes of sharply moving diffeotopies here which will be of interest to us.
	\begin{enumerate}
		\item\label{exmp:sharplyMovingDiffeo:1} If $V= V_0\times \mathbb{R}$, then we can identify a subpseudogroup $\diff(V,\pi)\subset\diff(V)$ consisting of diffeomorphisms $\phi : V \to V$ such that $\pi \circ \phi = \phi$. We shall refer to them as fiber preserving diffeomorphisms. It follows that $\diff(V,\pi)$ sharply moves $V_0$ in $V$.
		
		\item\label{exmp:sharplyMovingDiffeo:2} Let $K$ be a contact structure on $V$. Then, the collection of contact diffeotopies of $V$ sharply moves any submanifold $V_0 \subset V$ \cite[pg. 339]{gromovBook}.
	\end{enumerate}
\end{example}
As a consequence of the above theorem, we get the following result.
\begin{theorem} \label{thm:openManifoldHPrin}
	Let $V_0 \subset V$ be a submanifold positive codimension. A relation $\mathcal{R}$ satisfies the parametric $h$-principle near $V_0$ provided the following conditions hold:
	\begin{enumerate}
		\item $\mathcal{R}$ satisfies the local $h$-principle, and
		\item the solution sheaf of $\mathcal{R}$ satisfies the hypothesis of \autoref{thm:mainFlexibilityTheorem}.
	\end{enumerate} 
\end{theorem}

It can be easily seen that any \emph{open} relation satisfies the local $h$-principle and its solution sheaf is microflexible. A large class of \emph{non-open} relations also enjoy the same properties, as we shall discuss below.\smallskip

Let $J^r(V,M)$ denote the $r$-jet space associated with $C^r$ maps from a manifold $V$ to $M$. To study the $h$-principle for a relation $\mathcal{R} \subset J^r(V,M)$ on an arbitrary manifold $V$, the general idea is to first embed $V$ in a higher dimensional manifold $\tilde{V}$ and to find a relation $\tilde{\mathcal{R}}$ on $\tilde{V}$ which is an extension of $\mathcal{R}$ in the sense explained below. 

If $V$ is a submanifold of $\tilde{V}$, then there is a canonical restriction  morphism $C^\infty(\tilde{V},M) \to C^\infty(V, M)$ which then induces a map $\rho:J^r(\tilde{V},M)|_V \to J^r(V,M)$.

\begin{defn}\label{defn:microextension}
	A relation $\tilde{\mathcal{R}}$ on $\tilde{V}$ will be called an \emph{extension} of $\mathcal{R}\subset J^r(V,M)$ if $\rho$ maps $\tilde{\mathcal{R}}|_V$ into $\mathcal{R}$. An extension $\tilde{\mathcal{R}}$ will be called a \emph{microextension} of $\mathcal{R}$ if the induced maps $\rho_* : \Gamma(\tilde{\mathcal{R}}|_O) \to \Gamma(\mathcal{R}|_O)$ are surjective for contractible open sets $O \subset V$.
\end{defn}

It is to be noted that the notion of microextension as in \cite[pg. 85]{gromovBook} is different from the notion defined above. 

\subsection{Analytic Technique in $h$-Principle}\label{sec:generalHPrinciple:analytic}
Suppose $X\to V$ is a fibration and $G\to V$ is a vector bundle. Let us consider a $C^\infty$-differential operator $\mathfrak{D}: \Gamma X\to \Gamma G$ of order $r$, given by the $C^\infty$-bundle map $\Delta : X^{(r)}\to G$, known as the \emph{symbol} of the operator, satisfying $$\Delta\circ j^r_x = \mathfrak{D}(x), \quad \text{for $x \in \Gamma X$.}$$
Suppose that $\mathfrak{D}$ is \emph{infinitesimally invertible} over a subset $\mathcal{S} \subset \Gamma X$, where $\mathcal{S}$ consists of all $C^\infty$-solutions of a $d$-th order \emph{open} relation $S\subset X^{(d)}$, for some $d\ge r$. Roughly speaking, this means that there exists an integer $s\geq 0$ such that for each $x\in \mathcal{S}$, the linearization of $\mathfrak{D}$ at $x$ admits a right inverse, which is a linear differential operator of order $s$. The integer $s$ is called the \emph{order} of the inversion, while $d$ is called the \emph{defect}. The elements of $\mathcal{S}$ are referred to as \emph{$S$-regular} (or simply, \emph{regular}) maps.

It follows from the Nash-Gromov Implicit Function Theorem \cite[pg. 117]{gromovBook} for smooth differential operators that, $\mathfrak{D}$ restricted to $\mathcal{S}$ is an open map with respect to the fine $C^\infty$-topologies if the operator is infinitesimally invertible on $\mathcal{S}$. In particular, it implies that $\mathfrak{D}$ is locally invertible at $S$-regular maps. Explicitly, if $x_0\in\mathcal{S}$ and $\mathfrak{D}(x_0)=g_0$, then there exists a neighborhood $\mathcal{V}_0$ of the zero section in $\Gamma G$ and an operator $\mathfrak{D}_{x_0}^{-1}:\mathcal{V}_0\to \mathcal{S}$ such that for all $g\in\mathcal{V}_0$ we have $\mathfrak{D}(\mathfrak{D}_{x_0}^{-1}(g))=g_0+g$. We shall call $\mathfrak{D}_{x_0}^{-1}$ a local inverse of $\mathfrak{D}$ at $x_0$.

\begin{defn} 
	Fix some $g\in\Gamma G$. A germ $x_0\in S$ at a point $v \in V$ is called an \emph{infinitesimal solution} of $\mathfrak{D}(x)=g$ of order $\alpha$ if $j^\alpha_{\mathfrak{D}(x_0) - g}(v) = 0$.
\end{defn}

Let $\mathcal{R}^\alpha(\mathfrak{D}, g)\subset X^{r+\alpha}$ denote the relation consisting of jets represented by \emph{infinitesimal solutions} of $\mathfrak{D}(x)=g$ of order $\alpha$, at points of $V$. For $\alpha\geq d-r$, define the relations $\mathcal{R}_\alpha$ as follows:
$$\mathcal{R}_\alpha = \mathcal{R}_\alpha(\mathfrak{D},g,S) := \mathcal{R}^{\alpha}\cap (p_d^{r + \alpha})^{-1}S,$$
where $p^{r+\alpha}_d:X^{(r+\alpha)}\to X^{(d)}$ is the canonical projection of the jet spaces. Then, for all $\alpha\ge d-r$, the relations $\mathcal{R}_\alpha$ have the same set of $C^\infty$-solutions, namely, the $S$-regular $C^\infty$-solutions of $\mathfrak{D}(x)=g$. Denote the sheaf of solutions of any such $\mathcal{R}_\alpha$ by $\Phi$, and let $\Psi_\alpha$ denote the sheaf of sections of $\mathcal{R}_\alpha$.
\begin{theorem}\cite[pg. 119-120]{gromovBook}\label{thm:microflexibleLocalWHESheafTheorem}
	Suppose $\mathfrak{D}$ is a smooth differential operator of order $r$, which admits an infinitesimal inversion of order $s$ and defect $d$ on an open subset $S \subset X^{(d)}$, where $d \ge r$. Then for $\alpha\ge \max\{d+s, 2r+2s\}$ the jet map $j^{r+\alpha} : \Phi\to\Psi_\alpha$ is a local weak homotopy equivalence. Also, $\Phi$ is a microflexible sheaf. 
\end{theorem}

We end this section with a theorem on the Cauchy initial value problem associated with the equation $\mathfrak{D}(x) = g$.

\begin{theorem}\cite[pg. $144$]{gromovBook}  \label{thm:consistenInversion}
	Suppose $\mathfrak{D}$ is a differential operator of order $r$, admitting an infinitesimal inversion of order $s$ and defect $d$ over $\mathcal{S}$. Let $x_0\in \mathcal{S}$ and $g_0 = \mathfrak{D}(x_0)$. Suppose $V_0\subset V$ is a codimension $1$ submanifold without boundary and $g\in \Gamma G$ satisfies 
	$$j^{l}_g|_{V_0} = j^{l}_{g_0}|_{V_0} \text{ for some }l \ge 2r + 3s + \max\{d, 2r + s\}.$$
	Then, there exists an $x\in\mathcal{S}$ such that $\mathfrak{D}(x)= g$ on ${\Op V_0}$ and $$j^{2r+s-1}_x|_{V_0} = j^{2r+s-1}_{x_0}|_{V_0}.$$
\end{theorem}
The above result follows from a stronger version of the Implicit Function Theorem.

\section{Revisiting the $h$-Principle of Regular $K$-Contact Immersions} \label{sec:revisitKContact}
	Throughout this section $\mathcal{D}$ will denote an arbitrary corank-$p$ distribution on a manifold $M$ and $\lambda:TM\to TM/\mathcal{D}$ will denote the quotient map. For every pair of local sections $X,Y$ in $\mathcal{D}$, $\lambda([X,Y])$ is a local section of the bundle $TM/\mathcal{D}$. The map
\begin{align*}
	\Gamma(\mathcal{D})\times\Gamma(\mathcal{D}) &\to \Gamma(TM/\mathcal{D})\\
	(X,Y) &\mapsto  -\lambda([X,Y])
\end{align*}
is $C^\infty(M)$-linear and hence induces a bundle map $\Omega:\Lambda^2\mathcal{D} \to TM/\mathcal{D}$, which is called the \emph{curvature form} of the distribution $\mathcal{D}$. Any  local trivialization of the bundle $TM/\mathcal{D}$ defines local 1-forms $\lambda^i$, $i=1,\dots,p$, such that $\mathcal{D}\underset{loc.}{=}\cap_{i=1}^p\ker\lambda^i$. Then $\Omega$ can be locally expressed as follows:
$$\Omega \underset{loc.}{=} \big(d\lambda^1|_\mathcal{D}, \,\ldots,\, d\lambda^p|_\mathcal{D}\big).$$
The span $\langle d\lambda^1|_\mathcal{D},\ldots,d\lambda^p|_\mathcal{D}\rangle$ is clearly independent of the choice of defining $1$-forms $\lambda^1,\ldots,\lambda^p$ for $\mathcal{D}$.

\begin{remark}\label{rmk:curvatureConnection}
	The quotient map $\lambda$ can be treated as a $TM/\mathcal{D}$-valued $1$-form on $M$. If $\nabla$ is an arbitrary connection on the quotient bundle $TM/\mathcal{D}$, then the curvature form $\Omega$ can be given as $\Omega = d_\nabla \lambda|_\mathcal{D}$.
\end{remark}

\begin{defn}
	A smooth map $u:\Sigma\to M$ is \emph{$\mathcal{D}$-horizontal} if the differential $du$ maps $T\Sigma$ into $\mathcal{D}$.
\end{defn}

\begin{defn}\cite[pg. 338]{gromovBook}\label{defn:contactMap}
	Given a subbundle $K\subset T\Sigma$, we say a map $u:\Sigma\to (M,\mathcal{D})$ is \emph{$K$-contact} if $$du(K_\sigma)\subset T_{u(\sigma)}\mathcal{D},\quad\text{for each $\sigma\in\Sigma$.}$$
	A $K$-contact map $u:(\Sigma,K)\to (M,\mathcal{D})$ is called $K$-\emph{isocontact} (or, simply \emph{isocontact}) if we have $K = du^{-1}(\mathcal{D})$.
\end{defn}

In what follows below, $\Sigma$ will denote an arbitrary manifold and $K$ will denote an arbitrary but fixed subbundle of $T\Sigma$, unless mentioned otherwise. For any contact map $u:(\Sigma,K)\to (M,\mathcal{D})$, we have an induced bundle map
\begin{align*}
	\tilde{du} : T\Sigma/K &\longrightarrow u^*TM/\mathcal{D}\\
	X \mod K &\longmapsto du(X) \mod\mathcal{D}
\end{align*}
Clearly, a contact \emph{immersion} $u:(\Sigma,K)\to (M,\mathcal{D})$ is isocontact if and only if $\tilde{du}$ is a monomorphism. Hence, for an isocontact immersion $(\Sigma,K)\to (M,\mathcal{D})$ to exist, the following numerical constraints must necessarily be satisfied: $$\rk K \le \rk \mathcal{D} \quad\text{and}\quad \cork K \le \cork\mathcal{D}.$$
$K$-contactness automatically imposes a differential condition involving the curvatures of the two distributions.

\begin{prop}\label{prop:isocontactCurvatureCondition}
	If $u : (\Sigma,K) \to (M,\mathcal{D})$ is a $K$-contact map, then 
	\begin{equation}\label{eqn:curvatureEquation}
		u^*\Omega_\mathcal{D}|_K = \tilde{du} \circ \Omega_K,
	\end{equation}
	where $\Omega_K,\Omega_\mathcal{D}$ are the curvature forms of $K$ and $\mathcal{D}$ respectively. Equivalently, we have the following commutative diagram
	\[\begin{tikzcd}
		\Lambda^2K \arrow{d}[swap]{\Omega_K} \arrow{r}{du} & \Lambda^2 \mathcal{D}  \arrow{d}{\Omega_\mathcal{D}}\\
		T\Sigma/K \arrow{r}[swap]{\tilde{du}} & TM/\mathcal{D}
	\end{tikzcd}\]
\end{prop}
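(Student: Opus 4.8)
The plan is to verify the identity pointwise, using the connection description of the curvature form from \autoref{rmk:curvatureConnection}. Write $\lambda_\calD\colon TM\to TM/\calD$ and $\lambda_K\colon T\Sigma\to T\Sigma/K$ for the two quotient projections, regarded respectively as a $TM/\calD$-valued and a $T\Sigma/K$-valued $1$-form. The hypothesis $du(K)\subset\calD$ is precisely what makes $\tilde{du}$ well defined, and unwinding definitions yields the identity
\begin{equation}\label{eqn:pullbackOfQuotientForm}
	u^*\lambda_\calD \;=\; \tilde{du}\circ\lambda_K ,
\end{equation}
an equality of $u^*(TM/\calD)$-valued $1$-forms on $\Sigma$, since both sides send $v\in T_\sigma\Sigma$ to $du(v)\bmod\calD$.

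Now fix connections $\nabla$ on $TM/\calD$ and $\nabla'$ on $T\Sigma/K$. By \autoref{rmk:curvatureConnection}, $\Omega_\calD=(d_\nabla\lambda_\calD)\big|_\calD$ and $\Omega_K=(d_{\nabla'}\lambda_K)\big|_K$. Pullback of vector-bundle-valued forms intertwines $d_\nabla$ with $d_{u^*\nabla}$, so with \eqref{eqn:pullbackOfQuotientForm} we get $u^*(d_\nabla\lambda_\calD)=d_{u^*\nabla}(\tilde{du}\circ\lambda_K)$. For a bundle map $\phi\colon E\to F$ and an $E$-valued $1$-form $\alpha$, the Leibniz rule for the covariant exterior derivative reads
\[
	d_{\nabla^F}(\phi\circ\alpha)(X,Y)\;=\;(\nabla_X\phi)(\alpha Y)-(\nabla_Y\phi)(\alpha X)+\phi\big((d_{\nabla^E}\alpha)(X,Y)\big),
\]
the covariant derivative of $\phi$ being taken in the induced connection on $\mathrm{Hom}(E,F)$. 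Apply this with $\phi=\tilde{du}$, $\alpha=\lambda_K$, and evaluate on two local sections $X,Y$ of $K$. The decisive point is that $\lambda_K$ annihilates $K$, so $\alpha X=\alpha Y=0$ and the first two terms vanish, leaving
\[
	u^*\big(d_\nabla\lambda_\calD\big)(X,Y)\;=\;\tilde{du}\big((d_{\nabla'}\lambda_K)(X,Y)\big)\;=\;\tilde{du}\big(\Omega_K(X\wedge Y)\big).
\]

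On the other hand, since $X,Y\in K$ we have $du(X),du(Y)\in\calD$, so by the pullback formula for forms and the definition of $\Omega_\calD$,
\[
	u^*\big(d_\nabla\lambda_\calD\big)(X,Y)\;=\;(d_\nabla\lambda_\calD)(du X,du Y)\;=\;\Omega_\calD\big(du X\wedge du Y\big),
\]
which is exactly $\big(u^*\Omega_\calD|_K\big)(X\wedge Y)$ in the notation of the statement. As both sides of \eqref{eqn:curvatureEquation} are bundle maps $\Lambda^2K\to u^*(TM/\calD)$, agreement on all decomposable $X\wedge Y$ with $X,Y$ local sections of $K$ proves the identity and the commutativity of the diagram.

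I expect no conceptual obstacle; the only care needed is the bookkeeping of conventions for $d_\nabla$ applied to a composition and for the pullback of a bundle-valued form. One can also sidestep connections entirely: choose local coframes $\lambda^1,\dots,\lambda^p$ for $TM/\calD$ and $\mu^1,\dots,\mu^q$ for $T\Sigma/K$, so that locally $\calD=\cap_i\ker\lambda^i$, $K=\cap_j\ker\mu^j$, $\Omega_\calD\underset{loc.}{=}(d\lambda^i|_\calD)_i$ and $\Omega_K\underset{loc.}{=}(d\mu^j|_K)_j$. The $K$-contact condition forces $(u^*\lambda^i)|_K=0$, hence $u^*\lambda^i=\sum_j a^i_j\mu^j$ locally, and the matrix $(a^i_j)$ represents $\tilde{du}$ in these coframes. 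Then $u^*(d\lambda^i)=\sum_j da^i_j\wedge\mu^j+\sum_j a^i_j\,d\mu^j$, and restriction to $\Lambda^2K$ kills the first sum because $\mu^j|_K=0$, leaving $u^*(d\lambda^i)|_K=\sum_j a^i_j\,(d\mu^j|_K)$, which is equation \eqref{eqn:curvatureEquation} written out in these coframes.
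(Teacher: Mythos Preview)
The paper states this proposition without proof, treating it as a routine verification. Your argument is correct; both the connection-theoretic version and the local-coframe version go through. The coframe computation you give at the end is the most direct route and is almost certainly what the authors have in mind: since $du(K)\subset\calD$, each $u^*\lambda^i$ vanishes on $K$, so $u^*\lambda^i=\sum_j a^i_j\mu^j$ with $(a^i_j)$ representing $\tilde{du}$, and then $u^*(d\lambda^i)|_{\Lambda^2K}=\sum_j a^i_j\,d\mu^j|_{\Lambda^2K}$ because the $da^i_j\wedge\mu^j$ terms die on $K$. Your connection approach is a globalized repackaging of the same cancellation, with the vanishing of $\lambda_K$ on $K$ playing the identical role; it buys you a coordinate-free statement at the cost of the Leibniz-rule bookkeeping you flag.
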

If $K=T\Sigma$, then $\Omega_K = \Omega_{T\Sigma} = 0$. Hence, for a horizontal immersion $u:\Sigma\to M$ this gives the \emph{isotropy} condition, namely, $u^*\Omega_\mathcal{D} = 0$.\medskip

For simplicity, we assume that $\mathcal{D}$ is globally defined as the common kernel of $\lambda^1,\ldots,\lambda^p$, and consider the differential operator
\begin{align*}
	\opCont : C^\infty(\Sigma,M) &\to \Gamma \hom(K,\mathbb{R}^p) = \Omega^1(K,\mathbb{R}^p)\\
	u &\mapsto \big(u^*\lambda^s|_K\big)_{s=1}^p.
\end{align*}
Clearly, $K$-contact maps are solutions of $\opCont(u) = 0$. Recall that the tangent space of $C^\infty(\Sigma,M)$ at some $u:\Sigma\to M$ can be identified with the space of vector fields of $M$ \emph{along the map $u$}, i.e, the space of sections of $u^*TM$. Any such vector field $\xi \in \Gamma u^*TM$ can be represented by a family of maps $u_t : \Sigma\to M$ such that $u_0 = u$ and $\xi_\sigma = \frac{d}{dt}\big|_{t=0} u_t(\sigma)$ for $\sigma \in\Sigma$. Then, the linearization of $\opCont$ at $u$ is given by $$\linCont_u(\xi) = \frac{d}{dt}\Big|_{t=0} \opCont(u_t).$$ 
By the Cartan formula we get
\begin{align*}
	\linCont_u : \Gamma u^*TM &\to \Gamma \hom(K, \mathbb{R}^p)\\
	\xi &\mapsto \Big(\iota_\xi d\lambda^s + d\big(\iota_\xi\lambda^s\big)\Big)\Big|_K.
\end{align*}
Restricting $\linCont_u : \Gamma u^*TM\to \Gamma\hom(K,\mathbb{R}^p)$ to the subspace $\Gamma u^*\mathcal{D}$ we get a $C^\infty(M)$-linear operator
\begin{align*}
	\resLinCont_u : \Gamma u^*\mathcal{D}&\to \Gamma\hom(K,\mathbb{R}^p)\\
	\xi &\mapsto \Big(\iota_\xi d\lambda^s\Big)\Big|_K. 
\end{align*}
The associated bundle map will also be denoted by the same symbol.
\begin{defn}\label{defn:contOpenRegular}
	A smooth immersion $u :\Sigma\to M$ is said to be \emph{$(d\lambda^s)$-regular} if $\resLinCont_u$ is an epimorphism.
\end{defn}

We shall denote the space of all $(d\lambda^s)$-regular immersions by $\mathcal{S}$. Such maps $u$ are solutions to a first order \emph{open} relation $S\subset J^1(\Sigma,M)$.\smallskip

\begin{prop}\label{prop:opContInfinitesimallyInvertible}
	$\opCont$ is a first order differential operator having an infinitesimal inversion over $\mathcal{S}$ of order $s=0$, and defect $d=1$.
\end{prop}
\begin{proof}
	Since $\resLinCont_u$ is a surjective vector bundle map, we can always choose a right inverse $\mathfrak{M}_u : \Gamma (K, \mathbb{R}^p) \to \Gamma u^*\mathcal{D}$. By a choice of some auxiliary Riemannian metric on $M$, we can make sure $\mathfrak{M}_u$ depends smoothly on $u$. Clearly $\linCont_u \circ \mathfrak{M}_u = \resLinCont_u \circ \mathfrak{M}_u = \Id$, and hence $\linCont_u$ has a right inverse of order $s = 0$. Since $\mathcal{S}$ is the solution of a first order relation $S$, the inversion has defect $d = 1$.
\end{proof}

In general, $(d\lambda^s)$-regularity depends on the choice of $\lambda^s$, but the space of $(d\lambda^s)$-regular, $K$-contact immersions $(\Sigma,K)\to (M,\mathcal{D})$ is independent of any such choice. Indeed, if $du(K)\subset\mathcal{D}$, then
$$\resLinCont_u(\xi) = \iota_\xi\Omega \big|_K, \quad \text{for $\xi \in \Gamma u^*\mathcal{D}$,}$$ where $\Omega$ is the curvature $2$-form of $\mathcal{D}$.

\begin{remark}
	For a general distribution $\mathcal{D}$, not necessarily cotrivializable, we look at the operator $$\opCont : u \mapsto u^*\lambda|_K \in \Gamma\hom(K, u^*TM/\mathcal{D}), \quad \text{for any $u:\Sigma\to M$}.$$
	To put this in a rigorous framework, consider the infinite dimensional space $\mathcal{B} = C^\infty(\Sigma, M)$ and then consider the infinite dimensional vector bundle $\mathcal{E}\to \mathcal{B}$ with fibers $\mathcal{E}_u = \Gamma \hom(K, u^*TM/\mathcal{D})$. Then, $\opCont$ can be seen as a section of this vector bundle. To identify the linearization operator, we choose any connection $\nabla$ on $TM/\mathcal{D}$, which in turn induces a parallel transport on $\mathcal{E}$. We then get $\linCont_u (\xi) = \big(\iota_\xi d_\nabla \lambda + d_\nabla \iota_\xi \lambda\big)\big|_K$ for $\xi \in\Gamma u^*TM$. Restricting $\linCont_u$ to $\Gamma u^*\mathcal{D}$, we get the $C^\infty(\Sigma)$-linear map $$\resLinCont_u : \xi \mapsto \iota_\xi d_\nabla \lambda|_K, \quad \xi \in \Gamma u^*\mathcal{D}.$$
	In view of \autoref{rmk:curvatureConnection}, $\resLinCont_u(\xi) = \iota_\xi\Omega|_K$ for a $K$-contact immersion $u : \Sigma \to M$, which matches with our earlier description.
\end{remark}

\begin{defn}\label{defn:contOmegaRegular}
	A subspace $V\subset \mathcal{D}_y$ is called \emph{$\Omega$-regular} if the map
	\begin{equation}\label{eqn:omegaRegular}
		\begin{aligned}
			\mathcal{D}_y &\to \hom(V, TM/\mathcal{D}|_y)\\
			\xi &\mapsto \iota_\xi \Omega|_V
		\end{aligned}
	\end{equation}
	is surjective. A $K$-contact immersion $u:(\Sigma,K)\to (M,\mathcal{D})$ is called $\Omega$-\emph{regular} if $du_x(K_x) \subset \mathcal{D}_{u(x)}$ is $\Omega$-regular for every $x\in \Sigma$, equivalently, if $\resLinCont_u$ is a bundle epimorphism.
\end{defn}

	In order to study the $K$-contact immersions, let $\relCont_\alpha =$ $\relCont_\alpha(\opCont, 0, S) \subset J^{\alpha+1}(\Sigma,M)$ be the relation consisting of $(d\lambda^s)$-regular infinitesimal solutions of $\opCont = 0$ of order $\alpha$.  Then $\relCont_\alpha$, for all $\alpha \ge d -r = 0$, have the same $C^\infty$-solutions spaces, namely the $\Omega$-regular $K$-contact immersions. We introduce the following notation for the solution sheaf and the sheaf of sections of $\relCont_\alpha$: $$\solCont = \Sol \relCont_\alpha, \quad \secCont_\alpha = \Gamma \relCont_\alpha.$$

\begin{obs}\label{obs:relContMicroflexibleLocalWHE}
	From \autoref{thm:microflexibleLocalWHESheafTheorem} and \autoref{prop:opContInfinitesimallyInvertible} we obtain that
	\begin{itemize}
		\item $\solCont$ is microflexible, and
		\item for $\alpha \ge \max\{d + s, 2r + 2s\} = 2$, $\relCont_\alpha$ satisfies the parametric \emph{local} $h$-principle, i.e, the jet map $j^{\alpha + 1} : \solCont \to \secCont_\alpha$ is a \emph{local} weak homotopy equivalence.
	\end{itemize}
\end{obs}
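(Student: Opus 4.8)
The plan is to identify $\relCont_\alpha$ as the relation of $S$-regular infinitesimal solutions of the operator $\opCont$ and then quote the general sheaf-theoretic machinery of the Appendix, specifically the statement labelled \autoref{thm:microflexibleLocalWHESheafTheorem}. The only work is to verify the numerical hypotheses of that theorem, namely to pin down the order $s$ of the infinitesimal inversion, the defect $d$, and the order of the operator $r$, and then to check that the claimed ranges for $\alpha$ match $\max\{d+s,2r+2s\}$.

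First I would recall from the discussion immediately preceding the statement that $\opCont$ is a first-order differential operator, so $r = 1$; that it admits an infinitesimal inversion of order $s = 0$ over the open set $\calS$ of $(d\lambda^s)$-regular immersions (this is exactly what the epimorphism property of $\resLinCont_u$ buys us, via the algebraic splitting of the bundle map); and that the defect of this inversion is $d = 1$, since one integrates the pointwise-solvable linear system $\iota_\xi\Omega|_K = \text{(given)}$ against a choice of complement, losing one derivative. With $(r,s,d) = (1,0,1)$ one computes $d - r = 0$, which gives the stated range $\alpha \ge 0$ for which the relations $\relCont_\alpha$ share the same $C^\infty$-solution sheaf $\solCont$ (the $\Omega$-regular $K$-contact immersions), and $\max\{d+s,\,2r+2s\} = \max\{1,2\} = 2$, which is precisely the threshold appearing in the Observation.

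Having fixed these constants, the two bullet points follow directly. Microflexibility of $\solCont$ is the first conclusion of \autoref{thm:microflexibleLocalWHESheafTheorem}: the solution sheaf of an operator that is infinitesimally invertible over an open, $\diff$-invariant relation is microflexible. For the second bullet, the theorem asserts that for $\alpha$ above the threshold $\max\{d+s,2r+2s\}$ the jet map $j^{\alpha+1}\colon \solCont \to \secCont_\alpha$ is a local weak homotopy equivalence, and moreover that this holds parametrically; substituting the threshold value $2$ gives the statement. One should also note that $\relCont_\alpha$ is an open subset of $J^{\alpha+1}(\Sigma,M)$ — it is cut out by the vanishing of the infinitesimal solution condition together with the open regularity condition $S$ — so the hypotheses of the cited theorem are literally satisfied.

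The main (and really the only) obstacle is bookkeeping: making sure the order/defect/regularity-order triple is read off correctly from the Nash–Gromov implicit function theorem setup, since an off-by-one error in $s$ or $d$ would shift the threshold. Concretely, the subtle point is that the infinitesimal inversion has order $0$ — the inverting operator is algebraic (pointwise linear algebra in the fibres of $\Omega$), not differential — while the defect $1$ reflects the single integration needed to upgrade a formal/jet-level solution to an actual section; conflating these would be the easy mistake. Once $(r,s,d)=(1,0,1)$ is granted, everything is a direct citation of \autoref{thm:microflexibleLocalWHESheafTheorem}.
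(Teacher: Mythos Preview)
Your approach is correct and matches the paper's: the Observation is a direct citation of \autoref{thm:microflexibleLocalWHESheafTheorem} once the triple $(r,s,d)=(1,0,1)$ has been read off, and the paper itself states these values explicitly in the paragraph preceding the Observation (``$\opCont$ has an infinitesimal inversion of order $s=0$ over $\calS$ with defect $d=1$''), so there is no separate proof to compare against.

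Two small corrections to your commentary, neither of which affects the argument. First, $\relCont_\alpha$ is \emph{not} an open subset of $J^{\alpha+1}(\Sigma,M)$: the infinitesimal-solution condition $j^\alpha_{\opCont(u)}=0$ is closed, and your own description (``cut out by the vanishing of\ldots'') says as much. Fortunately the cited theorem requires only that the regularity relation $S\subset X^{(d)}$ be open, not $\calR_\alpha$; so this slip is harmless. Second, your gloss on the defect is off: in the Nash--Gromov setup the defect $d$ is by definition the order of the open relation $S$ cutting out the regular sections (here $S\subset J^1(\Sigma,M)$, whence $d=1$), not a count of integrations. You obtain the right number, but for the wrong reason. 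Finally, $\diff$-invariance is not among the hypotheses of \autoref{thm:microflexibleLocalWHESheafTheorem} and should be dropped from your first bullet.
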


In general, there is no natural $\diff(\Sigma)$ action on $\solCont$. However, when $K = T\Sigma$ then it is the sheaf of horizontal immersions for which we have the following results.

\begin{theorem}\cite{gromovBook}\label{thm:hPrinHorizOpenAlpha}
	If $\Sigma$ is an open manifold, then the relation $\relHor_\alpha$ satisfies the parametric $h$-principle for $\alpha \ge 2$.
\end{theorem}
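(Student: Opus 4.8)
The plan is to recognise $\solHor$ (the case $K=T\Sigma$ of $\solCont$) as a microflexible sheaf on $\Sigma$ that is \emph{natural}, i.e.\ invariant under the pseudogroup of local diffeomorphisms of $\Sigma$, and then to feed this, together with the openness of $\Sigma$, into the sheaf-theoretic $h$-principle recalled in \autoref{sec:generalHPrinciple}. In the book's language, the theorem is the assertion that $j^{\alpha+1}\colon\solHor\to\secHor_\alpha=\Gamma\relHor_\alpha$ is a (parametric) weak homotopy equivalence, and this is what I would aim to establish.

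First I would verify naturality. If $u\colon U\to M$ is an $\Omega$-regular $\calD$-horizontal immersion on an open $U\subseteq\Sigma$ and $\phi$ is a diffeomorphism from an open subset of $\Sigma$ onto $U$, then $u\circ\phi$ is again an immersion; it is $\calD$-horizontal because $d(u\circ\phi)=du\circ d\phi$ still takes values in $\calD$; and it remains $\Omega$-regular because $\Omega$-regularity of a subspace $V\subset\calD_y$ in the sense of \autoref{defn:contOmegaRegular} depends only on $V$ as a subspace, and $d(u\circ\phi)_x(T_x\Sigma)=du_{\phi(x)}(T_{\phi(x)}\Sigma)$. Hence $\diff(\Sigma)$, and more generally the pseudogroup of local diffeomorphisms, acts on $\solHor$ covering its tautological action on $\Sigma$, and compatibly on the relation $\relHor_\alpha\subset J^{\alpha+1}(\Sigma,M)$ and on $\secHor_\alpha$ so that $j^{\alpha+1}$ is equivariant.

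Next I would invoke \autoref{obs:relContMicroflexibleLocalWHE} specialised to $K=T\Sigma$: the sheaf $\solHor$ is microflexible, and for $\alpha\ge 2$ the jet map $j^{\alpha+1}\colon\solHor\to\secHor_\alpha$ is a \emph{local} weak homotopy equivalence. Since $\Sigma$ is open, Gromov's flexibility theorem for microflexible natural sheaves (\autoref{sec:generalHPrinciple}) upgrades microflexibility to \emph{flexibility} of $\solHor$. Flexibility together with the local weak homotopy equivalence then globalises by the standard induction over a handle decomposition of $\Sigma$: on each elementary handle one applies the local $h$-principle, and flexibility lets one patch the resulting holonomic deformations while keeping them fixed near the previously treated subcomplex. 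This yields that $j^{\alpha+1}\colon\solHor\to\secHor_\alpha$ is a weak homotopy equivalence. Because microflexibility, the local $h$-principle, and the flexibility theorem are all available parametrically, the same argument carries through with an additional compact parameter, giving the parametric $h$-principle for $\relHor_\alpha$ for all $\alpha\ge 2$.

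The main obstacle — indeed the only non-formal step once the Appendix is in place — is the passage from microflexibility to flexibility, and this is exactly where the openness of $\Sigma$ is indispensable (for a closed $\Sigma$ the statement genuinely fails). The argument diffuses the holonomy along an isotopy compressing $\Sigma$ into an arbitrarily thin neighbourhood of a subcomplex of positive codimension — such an isotopy exists precisely because $\Sigma$ has no closed components — using microflexibility to carry a holonomic deformation along for the short time of the compression, and the pseudogroup action to transport the result back. Keeping simultaneous track of the ``holonomic near a closed subset'' constraint and of the parameter directions is the delicate bookkeeping; everything else reduces to the general machinery.
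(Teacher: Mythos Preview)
Your proposal is correct and follows essentially the same route as the paper: verify $\diff(\Sigma)$-invariance of $\solHor$, invoke microflexibility and the local weak homotopy equivalence from \autoref{obs:relContMicroflexibleLocalWHE}, and then use openness of $\Sigma$ to upgrade to a global weak homotopy equivalence via Gromov's flexibility theorem. The paper compresses your steps~2--4 into a single appeal to \autoref{thm:openManifoldHPrin}, whereas you spell out the underlying mechanism (compression to a positive-codimension core and the sheaf homomorphism theorem), but the content is the same.
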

\begin{proof}
	We observe that the natural $\diff(\Sigma)$-action on $C^\infty(\Sigma, M)$ preserves $\mathcal{D}$-horizontality and $\Omega$-regularity. Hence, $\diff(\Sigma)$ acts on $\solHor = \Sol\relHor_\alpha$ for $\alpha \ge 0$. Then a direct application of \autoref{thm:openManifoldHPrin} gives us that $j^{\alpha + 1} : \solHor \to \Gamma \relHor_\alpha$ is a weak homotopy equivalence for $\alpha \ge 2$. In other words, $\relHor_\alpha$ satisfies the parametric $h$-principle for $\alpha \ge 2$.
\end{proof}

\begin{theorem}\label{thm:hPrinContOpenAlpha}
	Let $K$ be a contact structure on $\Sigma$. Then the relation $\relCont_\alpha$ satisfies the parametric $h$-principle for $\alpha \ge 2$ near any positive codimensional submanifold $V_0 \subset \Sigma$.
\end{theorem}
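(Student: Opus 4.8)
The plan is to upgrade the \emph{local} parametric $h$-principle for $\relCont_\alpha$ recorded in \autoref{obs:relContMicroflexibleLocalWHE} to a parametric $h$-principle near $V_0$. The engine is the sheaf-theoretic local-to-global theorem for microflexible sheaves that are invariant under a sufficiently large (e.g.\ transitive) pseudogroup of local diffeomorphisms, valid near a submanifold of positive codimension — the analogue, for contact $K$, of the $\diff(\Sigma)$-invariance argument used for open $\Sigma$ in \autoref{thm:hPrinHorizOpenAlpha}, and recalled in the Appendix. Accordingly the proof has three ingredients: (i) microflexibility of $\solCont$ and the local $h$-principle for $\alpha\ge 2$, which are exactly \autoref{obs:relContMicroflexibleLocalWHE}; (ii) invariance of $\solCont$; and (iii) largeness of the invariance pseudogroup.

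For (ii), let $\mathcal{G}_K$ be the pseudogroup of local diffeomorphisms $\phi$ of $\Sigma$ preserving $K$, that is $d\phi(K)=K$. I claim precomposition by $\mathcal{G}_K$ preserves $\solCont$. If $u$ is an $\Omega$-regular $K$-contact immersion on an open set $U'$ and $\phi:U\to U'$ lies in $\mathcal{G}_K$, then $d(u\circ\phi)(K_x)=du_{\phi(x)}\bigl(d\phi_x(K_x)\bigr)=du_{\phi(x)}(K_{\phi(x)})\subset\calD$, so $u\circ\phi$ is again $K$-contact; and since $d\phi_x:K_x\to K_{\phi(x)}$ is an isomorphism, the subspace $d(u\circ\phi)(K_x)$ of $\calD$ equals the $\Omega$-regular subspace $du(K_{\phi(x)})$, so $u\circ\phi$ is $\Omega$-regular as well. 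Precomposition is compatible with restriction to open subsets and with reparametrisation of families, so $\mathcal{G}_K$ acts on the sheaf $\solCont$, compatibly through the jet map with its action on $\secCont_\alpha$.

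Ingredient (iii) is where the hypothesis that $K$ is a contact structure enters, and it is the heart of the matter: $\mathcal{G}_K$ must be large enough for the cited theorem to apply near an arbitrary positive codimensional $V_0$ — it should act transitively on $\Sigma$, and be rich enough that a solution germ along $V_0$ can be propagated over a full neighborhood of $V_0$. Transitivity is the Darboux theorem for contact forms. The extra richness comes from the abundance of contact vector fields: via a local contact form the space of contact fields is parametrised by functions, and its evaluation map at any point is onto $T_p\Sigma$, so there are plenty of compactly supported contact isotopies, in particular ones supported near $V_0$ of the kind the local-to-global argument requires. Granting this, the theorem gives that $j^{\alpha+1}:\solCont\to\secCont_\alpha$ is a parametric weak homotopy equivalence over a neighborhood of $V_0$ for $\alpha\ge 2$, which is exactly the assertion.

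The main obstacle I foresee is ingredient (iii): matching the Appendix's hypotheses precisely — spelling out which \emph{moving} property of $\mathcal{G}_K$ is actually demanded by the local-to-global theorem, and then verifying it for a \emph{general} positive codimensional $V_0$, with no assumption that $V_0$ is transverse to, tangent to, or isotropic in $K$. I expect this to come down to a contact neighborhood-and-isotopy statement — a cofinal family of neighborhoods of $V_0$ related by ambient contact isotopies — which is folklore but warrants a careful proof here, precisely because, unlike in \autoref{thm:hPrinHorizOpenAlpha}, one cannot fall back on full $\diff(\Sigma)$-invariance.
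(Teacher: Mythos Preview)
Your approach is essentially the same as the paper's: microflexibility and local $h$-principle from \autoref{obs:relContMicroflexibleLocalWHE}, plus invariance of $\solCont$ under the contact pseudogroup, plus the Flexibility Theorem (\autoref{thm:mainFlexibilityTheorem}). The paper's proof is a single sentence because ingredient (iii), which you flag as the main obstacle, is dispatched by citation: the fact that the pseudogroup of contact diffeomorphisms \emph{sharply moves} any positive-codimensional submanifold is stated in \autoref{exmp:sharplyMovingDiffeo}(2), with reference to \cite[pg.~339]{gromovBook}. So your worry about matching the Appendix's hypotheses for a general $V_0$ is legitimate, but the answer is that ``sharply moves'' is the precise property required, and its verification for contact structures is a known result you may simply invoke rather than reprove via Darboux and contact vector fields.
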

\begin{proof}
	Since the group of contact diffeomorphisms sharply moves any submanifold of $\Sigma$ (\autoref{exmp:sharplyMovingDiffeo}), for any submanifold $V_0 \subset \Sigma$ of positive codimension, we have the $h$-principle via an application of \autoref{thm:mainFlexibilityTheorem}.
\end{proof}

\subsection{The Relation $\relCont$}
We now define a first order relation, taking into account the curvature condition (\autoref{eqn:curvatureEquation}). This relation will also have the same $C^\infty$-solution sheaf $\solCont$.

\begin{defn}\label{defn:relCont}
	Let $\relCont\subset J^1(\Sigma,M)$ denote the relation consisting of $1$-jets $(x,y, F:T_x\Sigma\to T_y M)$ satisfying the following:
	\begin{enumerate}
		\item \label{defn:relCont:1} $F$ is injective and $F(K_x)\subset\mathcal{D}_y$.
		
		\item \label{defn:relCont:2} $F$ is $\Omega$-regular, i.e, the linear map
		\begin{align*}
			\mathcal{D}_y &\to \hom(K_x, TM/\mathcal{D}|_y)\\
			\xi &\mapsto F^*(\iota_\xi\Omega)|_K = \big(X\mapsto \Omega(\xi, FX)\big) 
		\end{align*} is surjective (\autoref{eqn:omegaRegular}).
	
		\item \label{defn:relCont:3} $F$ abides by the \emph{curvature condition}, $F^*\Omega|_{K_x} = \tilde{F}\circ \Omega_K|_x$, where $\tilde F : T\Sigma/K|_x \to TM/\mathcal{D}|_y$ is the morphism induced by $F$ (\autoref{eqn:curvatureEquation}).
	\end{enumerate}
	The subrelation $\relICont\subset\relCont$ further satisfies the condition that
	\begin{itemize}
		\item[(4)] \label{defn:relCont:4} $\tilde{F}$ is injective.
	\end{itemize}
	If $K = T\Sigma$, we shall denote the corresponding relation by $\relHor$, whose solution space consists of $\Omega$-regular horizontal immersions.
\end{defn}

It is immediate from the definition that $\solCont = \Sol \relCont$. We shall refer to a section of $\relCont$ as a \emph{formal} $\Omega$-regular, $K$-contact immersion $(\Sigma,K)\to (M,\mathcal{D})$. We have the following result, which will be needed later in the proof of \autoref{prop:relContExtensionHPrin}.

\begin{lemma} \label{lemma:relContRetraction}
	The following holds true for the relation $\relCont$.
	\begin{enumerate}
		\item \label{lemma:relContRetraction:1} For each $(x,y)\in \Sigma\times M$, the subset $\relCont_{(x,y)}$ is a submanifold of $J^1_{(x,y)}(\Sigma,M)$.
		\item \label{lemma:relContRetraction:2} $\relCont$ is a submanifold of $J^1(\Sigma,M)$.
		\item \label{lemma:relContRetraction:3} The projection map $p = p^1_0:J^1(\Sigma,M)\to J^0(\Sigma,M)$ restricts to a submersion on $\relCont$.
	\end{enumerate}
\end{lemma}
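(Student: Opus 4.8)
The plan is to exhibit $\relCont$ as the transverse preimage of a zero section under a smooth bundle map over $\Sigma\times M$; once this is set up, a single application of the implicit function theorem yields all three assertions at once. First I would split off the linear part of the defining conditions. The pointwise linear condition $F(K_x)\subset\calD_y$ carves out a subset $J_K\subset J^1(\Sigma,M)$; over any $U\times O\subset\Sigma\times M$ on which $T\Sigma|_U$ and $TM|_O$ are trivialised by local frames whose initial members frame $K$ and $\calD$ respectively, $J_K$ becomes the product of the base with a fixed linear subspace of the fibre $\hom(T_x\Sigma,T_yM)$. Hence $J_K$ is a smooth vector subbundle of $J^1(\Sigma,M)$ over $\Sigma\times M$, and in particular $p=p^1_0$ restricts to a submersion on $J_K$. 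The conditions that $F$ be injective and $\Omega$-regular (cf.\ \autoref{eqn:omegaRegular}) are open, so they cut out an open subset $J_K^\circ\subset J_K$ on which $p$ is still a submersion and on which the quotient map $\tilde F:T_x\Sigma/K_x\to T_yM/\calD_y$ induced by $F$ is defined and depends smoothly on $(x,y,F)$.

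Next I would introduce, over $\Sigma\times M$, the smooth vector bundle $\calH$ with fibre $\hom(\Lambda^2 K_x, TM/\calD|_y)$, and the bundle map
\[
\Phi : J_K^\circ \longrightarrow \calH, \qquad (x,y,F)\longmapsto F^*\Omega|_{K_x}-\tilde F\circ\Omega_K|_x,
\]
which is smooth, being polynomial in $F$ with smoothly varying coefficients, and which satisfies $\Phi^{-1}(0)=\relCont$ by construction, since the only condition of \autoref{defn:relCont} not already imposed by $J_K^\circ$ is the curvature equation $F^*\Omega|_{K_x}=\tilde F\circ\Omega_K|_x$. The key claim is that \emph{the derivative of $\Phi$ in the fibre variable $F$ is surjective at every point of $\relCont$}. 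Granting this, in a local trivialisation $\Phi$ reads $(b,F)\mapsto(b,\phi(b,F))$ with $\partial_F\phi$ surjective along $\{\phi=0\}$, so the implicit function theorem shows that $\{\phi=0\}$ is a submanifold whose projection to the base is a submersion; this gives that $\relCont$ is a submanifold of $J^1(\Sigma,M)$ and that $p|_{\relCont}:\relCont\to\Sigma\times M$ is a submersion. Moreover, $p|_{\relCont}$ being a submersion forces $\relCont$ to be transverse in $J^1(\Sigma,M)$ to each fibre $J^1_{(x,y)}(\Sigma,M)=p^{-1}(x,y)$, so $\relCont_{(x,y)}=\relCont\cap J^1_{(x,y)}(\Sigma,M)$ is a submanifold of $J^1_{(x,y)}(\Sigma,M)$. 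All three statements follow.

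It remains to verify the key claim, and this is the step where $\Omega$-regularity is essential. Fix $(x,y,F)\in\relCont$; the vertical tangent space of $J_K^\circ$ at this point is $W=\{\dot F\in\hom(T_x\Sigma,T_yM):\dot F(K_x)\subset\calD_y\}$, and for $\dot F\in W$ and $X,Y\in K_x$ the variation formula reads
\[
(D_F\Phi)(\dot F)(X,Y) = \Omega(\dot F X, FY)+\Omega(FX,\dot F Y)-\widetilde{\dot F}\big(\Omega_K(X,Y)\big).
\]
It suffices to surject onto $\hom(\Lambda^2 K_x, TM/\calD|_y)$ using only those $\dot F$ that vanish on a fixed complement of $K_x$ and map $K_x$ into $\calD_y$; for such $\dot F$ the induced map $\widetilde{\dot F}$ vanishes, and writing $g=\dot F|_{K_x}\in\hom(K_x,\calD_y)$ and using the antisymmetry of $\Omega$,
\[
(D_F\Phi)(\dot F)(X,Y) = \Omega(gX,FY)-\Omega(gY,FX)=\Theta(gX)(Y)-\Theta(gY)(X),
\]
where $\Theta:\calD_y\to\hom(K_x, TM/\calD|_y)$, $\Theta(\xi)=\big(Z\mapsto\Omega(\xi,FZ)\big)$, is exactly the $\Omega$-regularity map of $F$, hence surjective by \autoref{defn:relCont}. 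Since $\hom(K_x,-)$ preserves surjections, $g\mapsto\big((X,Y)\mapsto\Theta(gX)(Y)\big)$ is a surjection $\hom(K_x,\calD_y)\twoheadrightarrow\hom(K_x\otimes K_x, TM/\calD|_y)$; composing it with the antisymmetrisation $\hom(K_x\otimes K_x, TM/\calD|_y)\twoheadrightarrow\hom(\Lambda^2 K_x, TM/\calD|_y)$, $\phi\mapsto\big((X,Y)\mapsto\phi(X,Y)-\phi(Y,X)\big)$ — surjective, being twice the identity on antisymmetric forms — realises $g\mapsto(D_F\Phi)(\dot F)$ as a surjection onto $\hom(\Lambda^2 K_x, TM/\calD|_y)$. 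This proves the claim, and with it the lemma; the only real difficulty is this surjectivity statement, which is precisely where — and why — $\Omega$-regularity has been built into the relation. The remaining steps are bookkeeping about subbundles of the jet space together with a routine variation computation.
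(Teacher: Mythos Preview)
Your approach is essentially the same as the paper's: both exhibit $\relCont$ as the zero set of a fibre-preserving map $\Phi$ (the paper's $\Xi_2$) from the subbundle $\{F(K_x)\subset\calD_y\}$ to $\hom(\Lambda^2 K, TM/\calD)$, restricted to the open set of injective $\Omega$-regular jets, and deduce all three statements from regularity of $\Phi$ along this open set. The paper simply asserts that the $\Omega$-regular jets are regular points of $\Xi_2$, whereas you actually carry out the verification via the factorisation $\hom(K_x,\calD_y)\twoheadrightarrow\hom(K_x\otimes K_x,TM/\calD|_y)\twoheadrightarrow\hom(\Lambda^2 K_x,TM/\calD|_y)$; this is a welcome addition and is exactly where $\Omega$-regularity enters.
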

\begin{proof}
	Note that $J^1(\Sigma,M)$ and $\hom(K,TM/\mathcal{D})$ are both vector bundles over $J^0(\Sigma,M)=\Sigma\times M$. Consider the bundle map
	\[\begin{tikzcd}
		\Xi_1 : J^1(\Sigma,M) \arrow{rd} \arrow{rr} &&\hom(K, TM/\mathcal{D}) \arrow{ld} \\
		&J^0(\Sigma,M)
	\end{tikzcd}\]
	defined over $J^0(\Sigma,M) = \Sigma\times M$ by
	\begin{align*}
		\Xi_1|_{(x,y)} : J^1_{(x,y)}(\Sigma,M) &\to \hom(K_x, TM/\mathcal{D}|_y)\\
		\big(x,y, F\big) &\mapsto F^*\lambda|_{K_x} = \lambda\circ F|_{K_x}
	\end{align*}
	Since $\lambda$ is an epimorphism, it is immediate that $\Xi_1$ is a bundle epimorphism and $\ker \Xi_1$ is a vector bundle over $J^0(\Sigma,M)$ given as
	$$\ker \Xi_1|_{(x,y)} = \big\{ (x,y, F) \;\big|\; F(K_x) \subset \mathcal{D}_y \big\}.$$
	Next, consider a fiber-preserving map $\Xi_2 : \ker\Xi_1 \to \hom(\Lambda^2 K, TM/\mathcal{D})$ over $J^0(\Sigma,M)$ given by
	\begin{align*}
		\Xi_2|_{(x,y)} : \ker\Xi_1|_{(x,y)} &\to \hom\big(\Lambda^2 K_x, TM/\mathcal{D}|_y\big)\\
		F &\mapsto F^*\Omega|_{K_x} - \tilde{F} \circ \Omega_{K_x} := \Big(X\wedge Y \mapsto \Omega(FX, FY) - \tilde{F}\circ\Omega_{K_x}(X,Y)\Big)
	\end{align*}
	where $\tilde{F} : T\Sigma/K|_x \to TM/\mathcal{D}|_y$ is the induced map and $\Omega_K:\Lambda^2 K \to T\Sigma/K$ is the curvature $2$-form of $K$. Let $\mathcal{R}_\Omega\subset J^1(\Sigma,M)$ be the space of jets satisfying (\ref{defn:relCont:1}) and (\ref{defn:relCont:2}) of \autoref{defn:relCont}. We note that
	$$\relCont_{(x,y)} = \Xi_2|_{(x,y)}^{-1}(0) \cap \underbrace{\{\text{$\Omega$-regular injective linear maps $T_x\Sigma\to T_yM$, mapping $K_x$ into $\mathcal{D}_y$ }\}}_{\mathcal{R}_\Omega|_{(x,y)} }.$$
	We can verify that $\mathcal{R}_\Omega|_{(x,y)}$ consists of regular points of $\Xi_2|_{(x,y)}$. Consequently, $\relCont_{(x,y)}$ is a submanifold of $J^1_{(x,y)}(\Sigma,M)$.
	Now, since $\Xi_2 : \ker\Xi_1 \to \hom(\Lambda^2 K, TM/\mathcal{D})$ is a fiber-preserving map, it follows that it is regular at all points of $\mathcal{R}_\Omega$ and therefore, $$\relCont = \Xi_2^{-1} \big(\textbf{0}\big) \cap \mathcal{R}_\Omega$$
	is a submanifold of $J^1(\Sigma,M)$. Here $\textbf{0} = \textbf{0}_{\Sigma\times M}\hookrightarrow \hom(\Lambda^2 K, TM/\mathcal{D})$ is the $0$-section.
	Lastly, we consider the commutative diagram
	\[\begin{tikzcd}
		\mathcal{R}_\Omega \subset \ker\Xi_1 \arrow{rr}{\Xi_2} \arrow{rd}[swap]{p^1_0|_{\relCont}} &&\hom(\Lambda^2 K, TM/\mathcal{D}) \arrow{ld}{\pi}\\
		&J^0(\Sigma,M)
	\end{tikzcd}\]
	Since $\Xi_2$ is a submersion on $\mathcal{R}_\Omega$, $p^1_0|_{\relCont}$ is also a submersion.
\end{proof}

We end this section with the following lemma which relates $\relCont_\alpha$ with $\relCont$ for $\alpha\geq 1$.
\begin{lemma}\label{lemma:jetLiftingIsoCont}\label{LEMMA:JETLIFTINGISOCONT}
	For any $\alpha\ge 1$, the jet projection map $p=p^{\alpha+1}_1:J^{\alpha+1}(\Sigma,M)\to J^1(\Sigma,M)$ maps the relation $\relCont_\alpha$ surjectively onto $\relCont$. Furthermore, for each $(x,y)\in\Sigma\times M$, the map $p:\relCont_\alpha|_{(x,y)}\to \relCont|_{(x,y)}$ has contractible fibers. Moreover, any section of $\relCont$ defined over a contractible chart in $\Sigma$ can be lifted to $\relCont_\alpha$ along $p$.  
\end{lemma}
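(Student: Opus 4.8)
The plan is to realize $p = p^{\alpha+1}_1$ as a composite of \emph{surjective affine bundles}
$$\relCont_\alpha\xrightarrow{\ p^{\alpha+1}_\alpha\ }\relCont_{\alpha-1}\longrightarrow\cdots\longrightarrow\relCont_2\xrightarrow{\ p^3_2\ }\relCont_1\xrightarrow{\ p^2_1\ }\relCont,$$
the last arrow being the only one that feels the curvature condition \autoref{defn:relCont}(\ref{defn:relCont:3}); the surjectivity, the contractibility of the fibers, and the lifting of sections all follow from this. Recall that $\relCont_\alpha\subset J^{\alpha+1}(\Sigma,M)$ consists of the jets $j^{\alpha+1}_x u$ with $j^1_x u\in S$ and $j^\alpha_x(\opCont u) = 0$, where $\opCont(u) = (u^*\lambda^s|_K)_s$.

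I would first check $p(\relCont_\alpha)\subseteq\relCont$. Let $j^{\alpha+1}_x u\in\relCont_\alpha$ and $F := du_x$. Since $j^1_x u\in S$, $F$ is injective and $(d\lambda^s)$-regular, and vanishing of the $0$-jet of $\opCont(u)$ gives $F(K_x)\subseteq\calD_y$, so $F$ satisfies \autoref{defn:relCont}(\ref{defn:relCont:1})--(\ref{defn:relCont:2}). As $\alpha\ge 1$ the $1$-jet of $\opCont(u)$ vanishes at $x$ too, so for $X,Y\in\Gamma K$ the functions $\langle u^*\lambda^s,X\rangle$ vanish to first order at $x$ and the Cartan formula gives
$$\big(F^*\Omega_\calD|_{K_x}(X,Y)\big)^s\ =\ u^*d\lambda^s(X,Y)\big|_x\ =\ -\big\langle u^*\lambda^s|_x,\,[X,Y]_x\big\rangle\ =\ \big(\tilde F\,\Omega_K(X,Y)\big)^s,$$
using $FX,FY\in\calD_y$, $\Omega_K(X,Y) = -[X,Y]\bmod K$ and $TM/\calD\cong\bbR^p$; this is \autoref{defn:relCont}(\ref{defn:relCont:3}), so $F\in\relCont$.

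For the reverse inclusion and the fibers I would fix $(x,y)$, coordinates on $\Sigma$ near $x$ with a local frame $e_1,\dots,e_m$ of $T\Sigma$ in which $e_1,\dots,e_r$ span $K$, and coordinates $q$ near $y$ with $\lambda^s = \sum_j a^s_j\,dq^j$, and establish that, for $\beta\ge 1$ and any $\beta$-jet $j^\beta_x u$ with $j^1_x u\in S$ and $j^{\beta-1}_x(\opCont u) = 0$, the $(\beta+1)$-jet extensions of $j^\beta_x u$ solving $j^\beta_x(\opCont u) = 0$ are exactly the solutions $W\in\Sym^{\beta+1}T^*_x\Sigma\otimes T_yM$ (the new top coefficient of $u$) of an affine equation $L_\beta W = c$, with $c = c(j^\beta_x u)$ and linear part $L_\beta = \rho_\beta\otimes\lambda_y$ (up to a nonzero scalar) — here $\rho_\beta\colon\Sym^{\beta+1}T^*_x\Sigma\to\Sym^\beta T^*_x\Sigma\otimes K^*_x$ restricts one slot to $K_x$ and $\lambda_y\colon T_yM\to\bbR^p$ is $(\lambda^s_y)_s$. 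Since $\im L_\beta = \im\rho_\beta\otimes\bbR^p$ has constant corank over $\relCont$, this equation is solvable iff $c\in\im L_\beta$, and then its solution set is an affine space modelled on $\ker L_\beta$. For $\beta = 1$, a direct computation of the degree-$1$ coefficient of $u^*\lambda^s|_K$ identifies the obstruction $c\bmod\im L_1$ with $\big(\tilde F\circ\Omega_K - F^*\Omega_\calD\big)|_{K_x}$ (the cokernel of $L_1$ being naturally $\hom(\Lambda^2 K_x, TM/\calD|_y)$), which vanishes precisely when $F$ obeys \autoref{defn:relCont}(\ref{defn:relCont:3}); hence $p^2_1$ maps $\relCont_1$ onto $\relCont$ as a surjective affine bundle. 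For $\beta\ge 2$ the obstruction vanishes automatically: now $j^{\beta-1}_x(\opCont u) = 0$ already forces \autoref{defn:relCont}(\ref{defn:relCont:3}) by the Cartan identity above, and $\opCont = 0$ carries no compatibility condition beyond this first-order one — precisely what the infinitesimal inversion of $\opCont$ over $\calS$ of defect $d = 1$ provides; concretely one reruns the $\beta = 1$ bookkeeping, the obstruction now being a combination of the vanishing lower Taylor coefficients of $u^*\lambda^s|_K$ and of $u^*d\lambda^s|_K = d(u^*\lambda^s)|_K$, via the Leibniz rule and $d^2 = 0$. Granting this, each $p^{\beta+1}_\beta\colon\relCont_\beta\to\relCont_{\beta-1}$ with $2\le\beta\le\alpha$ is also a surjective affine bundle.

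Composing the chain, $p$ is surjective onto $\relCont$ with fibers that are iterated affine bundles, hence contractible; along the way one also sees $\relCont_\alpha$ is a manifold and $p$ a submersion on it. For the last assertion, given a section $\phi$ of $\relCont$ over a contractible chart $U\subseteq\Sigma$, I would pull the whole tower back along $\phi$ and choose smooth sections from the bottom up — each step possible because an affine bundle over the contractible $U$ has a section — obtaining the desired lift $U\to\relCont_\alpha$ of $\phi$ along $p$. The delicate point, I expect, is the automatic vanishing of the $\beta\ge 2$ obstruction: unlike the case $\beta = 1$ it has no clean closed form, so one must either push the symmetrization bookkeeping through carefully or invoke the general calculus of infinitesimally invertible operators together with its defect estimates.
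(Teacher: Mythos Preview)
Your tower $\relCont_\alpha \to \relCont_{\alpha-1} \to \cdots \to \relCont_1 \to \relCont$ is not a chain of surjective affine bundles: the one-step projection $p^{\beta+1}_\beta : \relCont_\beta \to \relCont_{\beta-1}$ fails to be onto for $\beta\ge 2$. Given $j^\beta_x u$ with $j^{\beta-1}_x(u^*\lambda|_K)=0$, the equation $R_\beta=0$ for the top coefficient $W$ (your $L_\beta W=c$) is solvable only if the right-hand side is totally symmetric in its $\beta+1$ arguments, and this symmetry is equivalent to $j^{\beta-1}_x(u^*d\lambda|_K)=0$. The hypothesis $j^{\beta-1}_x(u^*\lambda|_K)=0$ only yields $j^{\beta-2}_x(u^*d\lambda|_K)=0$, one order short. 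So the image of $p^{\beta+1}_\beta$ is the \emph{proper} subset $\bar\calR_{\beta-1} := \relCont_{\beta-1}\cap\{j^{\beta-1}_x(u^*d\lambda|_K)=0\}$; your ``automatic vanishing'' for $\beta\ge 2$ is false as stated, and neither the defect $d=1$ nor the Cartan identity rescues it.

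The paper repairs this by replacing your chain with $\relCont_\alpha \twoheadrightarrow \bar\calR_{\alpha-1}\twoheadrightarrow\cdots\twoheadrightarrow\bar\calR_0=\relCont$. The first arrow is your $\lambda$-step and uses only that $\lambda$ has full rank. But each subsequent arrow $\bar\calR_\beta\to\bar\calR_{\beta-1}$ requires solving not just $\lambda\circ Q=\cdots$ but \emph{also} the prolonged constraint $j^\beta(u^*d\lambda|_K)=0$, which contributes equations of the form $\iota_{F(\partial_a)}d\lambda\circ Q(\partial_{J+b})=\cdots$. These are handled by a triangular (lexicographic) elimination, and the system has full rank precisely because the $1$-forms $\{\lambda^s,\;\iota_{F(\partial_a)}d\lambda^s|_\calD\}$ are independent---that is, because $F$ is $\Omega$-regular. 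Your write-up never uses $\Omega$-regularity to solve anything (only to record that $F\in\relCont$), and that is the missing idea: without it the $\bar\calR$-tower does not lift, and your original $\relCont_\beta$-tower does not even surject.
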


We postpone the proof of the above lemma to \autoref{sec:jetLiftingLemmaProof}. We get the following from \autoref{obs:relContMicroflexibleLocalWHE}.
\begin{corr}\label{corr:relContLocalWHE}
	The induced sheaf map $j^1 : \Sol \relCont \to \Gamma\relCont$ is a local weak homotopy equivalence.
\end{corr}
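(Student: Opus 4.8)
The plan is to factor the jet map $j^1 : \Sol\relCont \to \Gamma\relCont$ through the higher order relations $\relCont_\alpha$, for which the local $h$-principle is already available (\autoref{obs:relContMicroflexibleLocalWHE}), and then transfer that equivalence downward along the jet projection using \autoref{lemma:jetLiftingIsoCont}.

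First I would fix any $\alpha \ge 2$. Since $\Sol\relCont = \solCont = \Sol\relCont_\alpha$ and $p^{\alpha+1}_1 \circ j^{\alpha+1} = j^1$, there is a commutative triangle of sheaf morphisms
\[\begin{tikzcd}
	\solCont \arrow{rr}{j^{\alpha+1}} \arrow{dr}[swap]{j^1} & & \Gamma\relCont_\alpha \arrow{dl}{p_*} \\
	& \Gamma\relCont &
\end{tikzcd}\]
in which $p_*$ is the map on section sheaves induced by the jet projection $p = p^{\alpha+1}_1 : J^{\alpha+1}(\Sigma,M) \to J^1(\Sigma,M)$. By \autoref{obs:relContMicroflexibleLocalWHE} the top arrow $j^{\alpha+1}$ is a local weak homotopy equivalence (this is the one place $\alpha \ge 2$ enters). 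Hence, by the two-out-of-three property for local weak homotopy equivalences of sheaves, it suffices to prove that $p_* : \Gamma\relCont_\alpha \to \Gamma\relCont$ is a local weak homotopy equivalence.

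For this I would invoke \autoref{lemma:jetLiftingIsoCont}: $p$ maps $\relCont_\alpha$ onto $\relCont$, the fibers of $p : \relCont_\alpha \to \relCont$ over every point of $\relCont$ are contractible, and sections of $\relCont$ over contractible charts lift. The point is that once the $1$-jet is constrained to lie in $\relCont \subset S$, the higher-order conditions cutting out $\relCont_\alpha$ become affine equations on the remaining jet coordinates which, by infinitesimal invertibility of $\opCont$ over $S$, have a nonempty affine solution set inside each fiber of $p^{\alpha+1}_1$; thus $p : \relCont_\alpha \to \relCont$ is a (locally trivial) fibration with contractible fibers. Then over a contractible chart $U \subset \Sigma$, pulling back along a section $s : U \to \relCont$ produces a fibration $s^*\relCont_\alpha \to U$ with contractible fibers over a contractible base, whose space of sections is nonempty (by \autoref{lemma:jetLiftingIsoCont}) and weakly contractible by the standard vanishing-of-obstructions argument; running the same reasoning for families parametrized by a compact pair $(P,\partial P)$ shows that $p_* : \Gamma\relCont_\alpha(U) \to \Gamma\relCont(U)$ is a weak homotopy equivalence, and since such $U$ form a neighborhood basis of $\Sigma$, $p_*$ is a local weak homotopy equivalence. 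Combined with the triangle above, this proves the corollary.

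The step I expect to be the main obstacle is the middle one: promoting the pointwise ``contractible fibers'' and chartwise lifting statements of \autoref{lemma:jetLiftingIsoCont} to genuine fibration-type behaviour of $p : \relCont_\alpha \to \relCont$, uniformly in parameters, so that obstruction theory applies to the section spaces. This is precisely what the (postponed) proof of \autoref{lemma:jetLiftingIsoCont} is designed to supply — realizing $\relCont_\alpha$ over $\relCont$ as an affine subbundle of the affine bundle $p^{\alpha+1}_1$ with nonempty (hence convex, hence contractible) fibers — and everything else here is a formal consequence of that lemma together with \autoref{obs:relContMicroflexibleLocalWHE} and the commutative triangle.
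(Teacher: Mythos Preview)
Your proposal is correct and follows essentially the same approach as the paper: factor $j^1 = p_* \circ j^{\alpha+1}$, use \autoref{obs:relContMicroflexibleLocalWHE} for the local weak homotopy equivalence of $j^{\alpha+1}$, and deduce from \autoref{lemma:jetLiftingIsoCont} that $p_* : \Gamma\relCont_\alpha \to \Gamma\relCont$ is a local weak homotopy equivalence. The only difference is that where you sketch the obstruction-theoretic argument for $p_*$ directly, the paper simply cites the corresponding argument in \cite[pg.~77--78]{gromovBook}.
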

\begin{proof}
	By an argument presented in \cite[pg. 77-78]{gromovBook}, \autoref{lemma:jetLiftingIsoCont} implies that the sheaf map $p : \Gamma\relCont_\alpha\to \Gamma \relCont$ is a \emph{local} weak homotopy equivalence. Then, because of \autoref{obs:relContMicroflexibleLocalWHE}, $j^1 : \Sol \relCont \to \Gamma \relCont$ is a \emph{local} weak homotopy equivalence.
\end{proof}
Thus, the relations $\relCont$ (and hence $\relHor$), satisfy the \emph{local} parametric $h$-principle. We have the following corollary to \autoref{thm:hPrinHorizOpenAlpha}.
\begin{corr}\label{corr:hPrinHorizImmOpen}
	If $\Sigma$ is an open manifold, then the relation $\relHor$ satisfies the parametric $h$-principle.
\end{corr}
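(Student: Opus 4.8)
The plan is to derive \autoref{corr:hPrinHorizImmOpen} from \autoref{thm:hPrinHorizOpenAlpha} by transferring the parametric $h$-principle from the relations $\relHor_\alpha$ (with $\alpha \ge 2$) down to the first-order relation $\relHor$, using the lifting/comparison machinery already in place. The key point is that although $\relHor_\alpha$ and $\relHor$ live in different jet spaces, they have the \emph{same} solution sheaf $\solHor$, namely the sheaf of $\Omega$-regular $\calD$-horizontal immersions: this is exactly the content recorded just before \autoref{defn:relCont} (for $K = T\Sigma$) together with \autoref{lemma:jetLiftingIsoCont} specialized to $K = T\Sigma$.

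Concretely, first I would invoke \autoref{lemma:jetLiftingIsoCont} with $K = T\Sigma$: for $\alpha \ge 1$ the jet projection $p = p^{\alpha+1}_1 : J^{\alpha+1}(\Sigma,M) \to J^1(\Sigma,M)$ carries $\relHor_\alpha$ onto $\relHor$ surjectively, with contractible fibers over each $(x,y)$, and with the local lifting property for sections over contractible charts. By the argument of Gromov cited in the proof of \autoref{corr:relContLocalWHE} (\cite[pg. 77--78]{gromovBook}), this implies that the induced sheaf map $p_* : \Gamma\relHor_\alpha \to \Gamma\relHor$ is a \emph{local} weak homotopy equivalence. Second, I would take $\Sigma$ open and fix $\alpha \ge 2$; by \autoref{thm:hPrinHorizOpenAlpha} the jet map $j^{\alpha+1} : \solHor \to \Gamma\relHor_\alpha$ is a (genuine, global) weak homotopy equivalence. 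Composing, $p_* \circ j^{\alpha+1} : \solHor \to \Gamma\relHor$ agrees with $j^1 : \solHor \to \Gamma\relHor$ on the nose (since the jet of a solution is determined by its $1$-jet once higher-order data is reconstructed, or simply because both send a solution $u$ to the section $x \mapsto j^1_x u$).

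The remaining issue is that $p_*$ is only a \emph{local} weak homotopy equivalence, so the composition argument gives global weak homotopy equivalence of $j^1$ only after one knows $p_*$ is a \emph{global} one — and $\Gamma\relHor_\alpha$, $\Gamma\relHor$ are both flabby sheaves, so a local weak homotopy equivalence between them is automatically global (this is the standard sheaf-theoretic principle; see the appendix). Thus $p_* : \Gamma\relHor_\alpha \to \Gamma\relHor$ is a global weak homotopy equivalence, and since $j^{\alpha+1} : \solHor \to \Gamma\relHor_\alpha$ is one by \autoref{thm:hPrinHorizOpenAlpha}, the composite $j^1 : \solHor \to \Gamma\relHor$ is a global weak homotopy equivalence. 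This is precisely the parametric $h$-principle for $\relHor$.

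I expect the only real subtlety to be bookkeeping: making sure the identification $\solHor = \Sol\relHor_\alpha = \Sol\relHor$ is legitimate (that the $\Omega$-regularity and horizontality conditions packaged into $\relHor$ cut out exactly the same smooth solutions as $\relHor_\alpha$ for every $\alpha \ge d - r = 0$, which is asserted in the text preceding \autoref{obs:relContMicroflexibleLocalWHE}), and that passing from "local weak homotopy equivalence of flabby sheaves" to "global weak homotopy equivalence" is invoked with the correct hypotheses — but both are already available in the paper, so the proof is short. An alternative, equally valid route avoiding the flabbiness step: combine \autoref{corr:relContLocalWHE} (which gives $j^1 : \solHor \to \Gamma\relHor$ a local weak homotopy equivalence) with the $\diff(\Sigma)$-invariance of $\solHor$ noted in the proof of \autoref{thm:hPrinHorizOpenAlpha} and apply \autoref{thm:openManifoldHPrin} directly to the first-order relation $\relHor$; I would likely present this shorter version.
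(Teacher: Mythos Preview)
Your proposal is correct and matches the paper's intended argument: the corollary is stated without proof as a direct consequence of \autoref{thm:hPrinHorizOpenAlpha}, and the natural derivation is exactly your first route---use \autoref{lemma:jetLiftingIsoCont} to get that $p_* : \Gamma\relHor_\alpha \to \Gamma\relHor$ is a local weak homotopy equivalence between flexible sheaves (hence global by \autoref{thm:sheafHomoTheorem}), then compose with $j^{\alpha+1}$. Your alternative route via \autoref{corr:relContLocalWHE} and \autoref{thm:openManifoldHPrin} is equally valid and arguably cleaner; one minor terminological point is that the paper uses ``flexible'' rather than ``flabby'' for the relevant property of the section sheaves $\Gamma\calR$.
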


\subsection{$h$-principle for $\relCont$}
In order to get the $h$-principle for $\relCont$ on an arbitrary manifold $\Sigma$, the general plan is to embed $\Sigma$ in a manifold $\tilde{\Sigma}$ with a distribution $\tilde{K}$ such that $\tilde{K}|_\Sigma \cap T\Sigma =K$. We define an operator $\opContTilde$ for the pair $(\tilde{\Sigma},\tilde{K})$ as we did in the case of $(\Sigma,K)$. 
Let us denote the associated relations on $\tilde{\Sigma}$ by $\relContTilde_\alpha$, $\alpha\geq 0$, and $\relContTilde \subset \relContTilde_0$. Let $\solContTilde$ be the sheaf of $\Omega$-regular, $\tilde{K}$-contact immersions. As noted earlier, $\solContTilde = \Sol(\relContTilde_\alpha)=\Sol(\relContTilde)$. Since $K = \tilde K|_\Sigma \cap T\Sigma$, the restriction morphism $C^\infty(\tilde{\Sigma},M) \to C^\infty(\Sigma, M)$ gives rise to a sheaf homomorphism
\begin{align*}
	ev : \solContTilde|_\Sigma &\to \solCont\\
	u &\mapsto u|_{\Sigma}
\end{align*}
which naturally induces a map $ev: \relContTilde|_{\Sigma} \to \relCont$. Therefore, $\relContTilde$ is an extension of $\relCont$. To keep the notation light, we have denoted the induced map by $ev$ as well.\medskip

\paragraph{\underline{\textit{Notation}}} For any subset $A$ in $\Sigma$, we use the notations $\Op A$ (resp., $\tilde{\Op} A$) to denote an arbitrary open set containing $A$ in $\Sigma$ (resp. $\tilde{\Sigma}$).

\begin{prop}\label{prop:relContExtensionHPrin}
	Let $O\subset \Sigma$ be a coordinate chart and $C\subset O$ be a compact subset. Suppose $U\subset M$ is an open subset such that $\mathcal{D}|_U$ is trivial. Then given any $\Omega$-regular $K$-contact immersion $u:\Op C \to U\subset M$, the 1-jet map 
	$$j^1 : ev^{-1}(u)\to ev^{-1}(F=j^1_u)$$
	in the commutative diagram,
	\[\begin{tikzcd}
		ev^{-1}(u) \arrow[hookrightarrow]{r} \arrow[dashed]{d}[swap]{j^1} & \solContTilde|_{C\times 0}\arrow{d} \arrow{r} & \solCont|_{C} \arrow{d} & u \arrow[maps to]{d}\\
		ev^{-1}(F) \arrow[hookrightarrow]{r} & \secContTilde|_{C \times 0} \arrow{r} &  \secCont|_{C} & F = j^1_u
	\end{tikzcd}\]
	induces a surjection between the set of path components.
\end{prop}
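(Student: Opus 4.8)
The plan is to prove the statement by a relative, parametric h-principle argument for the evaluation fibration $ev : \solContTilde|_\Sigma \to \solCont$, using that $\tilde\Sigma = \Sigma \times \bbR$ (or $J^1(\Sigma,\bbR)$) carries a distribution $\tilde K$ extending $K$ for which $\relContTilde$ enjoys the local parametric $h$-principle together with microflexibility (\autoref{obs:relContMicroflexibleLocalWHE}, \autoref{corr:relContLocalWHE}). The key point is that over the compact set $C$ inside a coordinate chart $O$, with $\calD|_U$ trivial, the operator $\opContTilde$ is globally defined and infinitesimally invertible on the open set $\calS$ of $(d\lambda^s)$-regular immersions, so the Nash--Gromov machinery applies on $\tilde\Op(C)$.

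First I would fix the formal data: a section $F = j^1_u$ of $\relCont$ over $C$ together with a chosen element of $ev^{-1}(F) \subset \secContTilde|_{C\times 0}$, i.e.\ a formal $\tilde K$-contact solution $\tilde F$ over $\tilde\Op(C)$ restricting to $F$. I want to produce a genuine $\Omega$-regular $\tilde K$-contact immersion $\tilde u : \tilde\Op(C)\to M$ with $\tilde u|_{\Op C} = u$ and $j^1\tilde u$ homotopic to $\tilde F$ through sections in $ev^{-1}(F)$; that is exactly the required surjectivity on $\pi_0$, and it simultaneously handles the parametric statement because the same scheme runs over a compact parameter space. The mechanism is: (1) by the local parametric $h$-principle for $\relContTilde$ near a point of $\Sigma_0 \times 0$, resolve $\tilde F$ into a holonomic section over a neighborhood of each point of $C$, staying in $ev^{-1}(F)$ since along $\Sigma$ the jet is already holonomic (equal to $j^1_u$); (2) patch these local genuine solutions over the chart $O$ using microflexibility of $\solContTilde$ — here the compactness of $C$ and the finiteness of the cover let one do the standard inductive gluing, extending the solution over one more piece of the cover at a time while only shrinking the neighborhood in $\tilde\Sigma$; the restriction to $\Op C$ remains $u$ throughout because we never perturb on $\Sigma$ itself.

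The main obstacle — and the step I would spend the most care on — is the patching in (2): microflexibility gives extension of solutions only after shrinking, and one must check that the shrinking can be arranged to keep $\Op C$ (a neighborhood of $C$ in $\Sigma$, not in $\tilde\Sigma$) inside the domain, i.e.\ that the retraction to $\Sigma\times 0$ is compatible with the cover. This is where the hypotheses that $O$ is a single chart and $\calD|_U$ trivial are used: they make $\opContTilde = \opContTilde_{(\tilde O,\tilde K)}$ a bona fide differential operator into $\Omega^1(\tilde K,\bbR^p)$ with an order-$0$ infinitesimal inversion of defect $1$ over $\calS$ (\autoref{sec:generalHPrinciple:analytic}), so the Nash--Gromov implicit function theorem yields local solvability with control of the domain, and the solution sheaf $\solContTilde$ is microflexible by \autoref{obs:relContMicroflexibleLocalWHE}. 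One then assembles: cover $C$ by finitely many charts $\{V_j\}$ subordinate to $O$; over $V_1$ resolve $\tilde F$ to a holonomic $\tilde F_1 \in ev^{-1}(u|_{\Op(C\cap V_1)})$; inductively, having a genuine solution over $\tilde\Op(C\cap(V_1\cup\cdots\cup V_{j-1}))$ agreeing with $u$ on $\Sigma$, use microflexibility to extend it over $\tilde\Op(C\cap V_j)$ (shrinking as needed); after finitely many steps we obtain $\tilde u$ over $\tilde\Op(C)$, hence a point of $ev^{-1}(u)$ mapping to the given component of $ev^{-1}(F)$.

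Finally, for the $\pi_0$-surjectivity one checks that the resolution and patching can be performed inside a fixed homotopy class: any two choices made in steps (1)–(2) are connected by running the same argument with an extra $[0,1]$-parameter, using the parametric form of the local $h$-principle and of microflexibility (both available in \autoref{obs:relContMicroflexibleLocalWHE}). Thus every path component of $ev^{-1}(F)$ is hit by $j^1(ev^{-1}(u))$, which is the assertion. I expect the routine parts (Cartan-formula computations, the explicit form of the infinitesimal inversion, verifying openness of $\calS$) to be exactly as set up earlier in the section, so the write-up should concentrate on the inductive gluing and the domain bookkeeping.
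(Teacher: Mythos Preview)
There is a genuine gap. The crux of the proposition is the \emph{relative} statement: given $\tilde F\in ev^{-1}(F)$, produce a holonomic $\tilde u$ together with a homotopy $\tilde F_t$ that stays in $ev^{-1}(F)$, i.e.\ restricts to $j^1_u$ on $\Sigma$ throughout. Your step~(1) invokes the local parametric $h$-principle for $\relContTilde$ and asserts that the resulting homotopy ``stays in $ev^{-1}(F)$ since along $\Sigma$ the jet is already holonomic''. But the local $h$-principle (\autoref{corr:relContLocalWHE}) is a black box: it gives \emph{some} homotopy of $\tilde F$ to a holonomic section on $\tilde\Op(v)$, with no control over its restriction to $\Sigma$. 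The fact that $\tilde F|_\Sigma$ is already holonomic does not pin it down during the homotopy. Establishing exactly this relative control is the content of the proposition, so your step~(1) is circular.

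The paper resolves this with an analytic, not sheaf-theoretic, tool: the Cauchy initial value result (\autoref{thm:consistenInversion}). One first lifts $\tilde F$ to a high-order jet $\hat F\in\Gamma\relContTilde_\alpha$ via \autoref{lemma:jetLiftingIsoCont}, realizes $\hat F$ by a Taylor map $\hat u$ so that $\hat u$ is an infinitesimal solution of order $\alpha\ge 4$ along the hypersurface $W_0=(V\times 0)\cap W$, and then invokes \autoref{thm:consistenInversion} to obtain a genuine solution $\bar u$ of $\opContTilde=0$ on $\tilde\Op(C)$ with $j^1_{\bar u}=j^1_{\hat u}$ along $W_0$; in particular $\bar u|_{\Op C}=u$. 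The homotopy from $\tilde F$ to $j^1_{\bar u}$ is then built by hand (scaling $\bar u(x,ts)$ plus parallel transport), and pushed into $\relContTilde$ using the tubular neighborhood retraction of \autoref{lemma:relContRetraction}; since both endpoints agree with $\tilde F$ on $C\times 0$, the homotopy can be kept in $ev^{-1}(F)$. Your proposal never invokes \autoref{thm:consistenInversion}, and there is no substitute for it here.

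A secondary issue: your step~(2) uses microflexibility to patch local solutions into a solution on $\tilde\Op(C)$. Microflexibility (\autoref{defn:sheafMicroFlexible}) only gives \emph{short-time} lifts of homotopies $\Phi(B)\to\Phi(A)$; it does not let you extend a solution from $A$ to $B$, nor glue two solutions that agree only up to homotopy on an overlap. The gluing you want would require flexibility of $\solContTilde$, which is not available on $\tilde\Sigma$ (only on $\solContTilde|_\Sigma$, via \autoref{thm:mainFlexibilityTheorem}); in the paper that flexibility is used later, in \autoref{thm:hPrinParametricRelative}, once the present proposition is already in hand.
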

\begin{proof}
	Recall the following sheaves: $$\solCont = \Sol\relCont, \quad \secCont=\Gamma\relCont, \qquad \solContTilde=\Sol\relContTilde,\quad \secContTilde = \Gamma \relContTilde.$$  
	Fix some neighborhood $V$ of $C$, with $C\subset V\subset O$, over which $u$ is defined. The proof now proceeds through the following steps.
	\begin{description}[leftmargin=*]
		\item[Step $1$] \label{prop:relContExtensionHPrin:step1} Given an arbitrary extension $\tilde{F} \in \secContTilde|_{C\times 0}$ of $F$ along $ev$, we construct a regular solution $\bar u$ on $\tilde{Op}C$, so that $j^1_{\bar u}|_{\Op C} = \tilde{F}|_{\Op C}$.
		
		\item[Step $2$] \label{prop:relContExtensionHPrin:step2} We get a homotopy between $j^1_{\bar u}$ and $\tilde{F}$ in the affine bundle $J^1(W,U)$ which is constant on points of $C$.
		
		\item[Step $3$] \label{prop:relContExtensionHPrin:step3} We then push the homotopy obtained in \hyperref[prop:relContExtensionHPrin:step2]{Step $2$} inside $\relContTilde$, using \autoref{lemma:relContRetraction}. Thereby completing the proof.
	\end{description}
	
	\paragraph{\underline{Proof of \hyperref[prop:relContExtensionHPrin:step1]{Step $1$}}:} Suppose $\tilde{F}\in \secContTilde|_{C\times 0}$ is some arbitrary extension of $F$ along $ev$. Using \autoref{lemma:jetLiftingIsoCont}, we then get an arbitrary lift $\hat F\in \Gamma \relContTilde_\alpha|_C$ of $\tilde F$, for $\alpha$ sufficiently large (in fact, $\alpha\ge 4$ will suffice). The formal maps are represented in the following diagram.
	\[\begin{tikzcd}
		&\relContTilde_\alpha|_{\Op C} \arrow{d}{p^{\alpha+1}_1}\\
		&\relContTilde|_{\Op C} \arrow{d}{ev}\\
		\Op C \arrow{ruu}{\hat F} \arrow{ru}[swap]{\tilde{F}} \arrow{r}[swap]{F} &\relCont
	\end{tikzcd}\]
	We can now define a map $\hat u : \tilde{\Op}(C)\to U$ so that $j^{\alpha+1}_{\hat u}(p,0) = \hat F(p,0)$, by applying a Taylor series argument. In particular, we have $\hat u|_{C\times 0} = u$ and $\hat u$ is regular on points of $\Op(C)\times 0$. Since $C$ is a compact set and regularity is an open condition, we have that $\hat u$ is regular on some open set $W \subset \tilde\Sigma$ satisfying, $C\subset W\subset \bar W \subset \tilde \Op(C)$. Moreover, $\hat u$ is a regular infinitesimal solution along the set $W_0 = (V\times {0})\cap W\subset \tilde\Op(C)$ of order $$\alpha \ge 2.1 + 3.0 + \max\{1, 2.1 + 0\} = 4,$$
	for the equation $\tilde{\mathfrak{D}} = 0$, where $\tilde{\mathfrak{D}} = \opContTilde : v\mapsto v^*\lambda^s|_{\tilde{K}}$ is defined over $C^\infty(W, U)$. Now, by applying \autoref{thm:consistenInversion} we get an $\Omega$-regular immersion $\bar u: V\to U$ such that, $\tilde{\mathfrak{D}}(\bar u) = 0$ and furthermore, $$j^1_{\bar u} = j^1_{\hat u} \quad \text{on points of $W_0$.}$$
	In particular, $j^1_{\bar{u}}(p,0) = \tilde F(p,0)$ for $(p,0)\in W_0$ and so $u$ on $\Op C$ is extended to $\bar{u}$ on $W$. \smallskip
	
	\paragraph{\underline{Proof of \hyperref[prop:relContExtensionHPrin:step2]{Step $2$}}:} Let us denote $\tilde u = \bs \tilde{F}$ and define, $v_t(x,s) = \bar u (x, ts)$ for $(x,s)\in W$. Note that, $$v_0(x,s) = \bar u(x,0) = \hat u(x,0) = \tilde{u}(x, 0)$$
	and so $v_t$ is a homotopy between the maps $\bar u$ and $\pi^*(\tilde{u}|_{\Op C})|_W$, where $\pi : \Sigma\times \mathbb{R} \to \Sigma$ is the projection. Now, with the help of some auxiliary choice of parallel transport on the vector bundle $J^1(W,U)$, we can get isomorphisms
	$$\varphi(x,s) : J^1_{((x,0), \bar u(x,0))}(W, U) \to J^1_{((x,s), \bar u(x,s))}(W, U), \quad \text{for $(x,s)\in W$, for $s$ sufficiently small,}$$
	so that $\varphi(x,0) = \Id$. We then define the homotopy, $$G_t|_{(x,s)} = (1-t) \cdot \varphi(x,ts) \circ  \tilde{F}|_{(x,0)} + t \cdot j^1_{\bar u}(x,ts) \; \in J^1_{((x,ts), \bar u(x,ts))}(W,U).$$
	Clearly $G_t$ covers $v_t$; we have $$G_0|_{(x,s)} = \varphi(x,0) \circ \tilde{F}|_{(x,0)} = \tilde{F}|_{(x,0)} = \tilde{F}|_{(x,s)} \qquad\text{and}\qquad G_1|_{(x,s)} = j^1_{\bar u}(x,s).$$
	Thus, we have obtained a homotopy $G_t$ between $\pi^*(\tilde{F}|_{\Op C})|_{\tilde{Op} C}$ and $j^1_{\bar u}$. Similar argument produces a homotopy between $\tilde{F}$ and $\pi^*(\tilde{F}|_{\Op C})|_W$ as well. Concatenating the two homotopies, we have a homotopy $H_t$ between $\tilde{F}$ and $j^1_{\bar u}$, in the affine bundle $J^1(W, U)\to W\times U$. However, $H_t$ need not lie in $\relContTilde$.\smallskip
	
	\paragraph{\underline{Proof of \hyperref[prop:relContExtensionHPrin:step3]{Step $3$}}:} By \autoref{lemma:relContRetraction}, we get a tubular neighborhood $\mathcal{N}\subset J^1(W,U)$ of $\relContTilde$ which \emph{fiber-wise} deformation retracts onto $\relContTilde$. Suppose $\rho : \mathcal{N}\to \relContTilde$ is such a retraction. Now, note that on points of $C$ $$H_t|_{(x,0)} = (1-t) \cdot \tilde F|_{(x,0)} + t\cdot j^1_{\bar u}(x,0) = \tilde{F}|_{(x,0)}.$$
	Since $C$ is compact, we may get a neighborhood $W'$, satisfying $C\subset W^\prime \subset W$, such that the homotopy $H_t|_{W^\prime}$ takes its values in the open neighborhood $\mathcal{N}$ of $\im\tilde F$. Then composing with the retraction $\rho$, we can push this homotopy inside the relation $\relContTilde$, obtaining a homotopy $\tilde{F}_t\in\tilde\Psi|_C$ joining $\tilde F$ to $j^1_{\bar u}$. Observe that the homotopy remains constant on points of $C$. In particular, $ev(\tilde F_t) = F$ on points of $C$. This concludes the proof.
\end{proof}

We have the following $h$-principle for $\relCont$.

\begin{theorem}\label{thm:hPrinGenExtnHoriz}
	Suppose $(\Sigma, K)$ is a pair consisting of an arbitrary manifold $\Sigma$ with a distribution $K$. Let $(\tilde{\Sigma},\tilde{K})=(\Sigma\times\mathbb{R}, K\times\mathbb{R})$. Then $\relContTilde$ is an extension of $\relCont$, and $\solContTilde|_\Sigma$ is a flexible sheaf. If $\relContTilde$ is a microextension of $\relCont$ (\autoref{defn:microextension}), then the relation $\relCont$ satisfies the $C^0$-dense parametric $h$-principle.
\end{theorem}
\begin{proof}
	Let us take any $\phi \in \diff(\tilde\Sigma,\pi)$ so that  $\pi \circ \phi = \pi$, where $\pi : \tilde \Sigma \to \Sigma$ is the projection (\autoref{exmp:sharplyMovingDiffeo} (\ref{exmp:sharplyMovingDiffeo:1})). Then, we have
		\[d\phi^{-1}(\tilde K) = d\phi^{-1}\left( d\pi^{-1} (K) \right) = d(\pi\circ \phi)^{-1}(K) = d\pi^{-1}(K) = \tilde K.\]
	This shows that $\solContTilde$ is invariant under the natural action of $\diff(\tilde\Sigma,\pi)$. Furthermore, diffeotopies in $\diff(\tilde\Sigma,\pi)$ sharply move $\Sigma_0\cong \Sigma\times 0$ in $\Sigma\times\mathbb R$. Then, the flexibility of $\solContTilde|_{\Sigma_0}$ follows from \autoref{thm:mainFlexibilityTheorem}. The parametric $h$-principle for $\relCont$ can now be derived from the next lemma by a standard argument as in \cite{eliashbergBook}.
\end{proof}

\begin{lemma}\label{lemma:hPrinParametricRelative}
	Suppose, for any contractible open set $O\subset \Sigma$, the map $ev : \Gamma \relContTilde|_O \to \Gamma \relCont|_O$ is surjective. For a compact polyhedron $P$ along with a subpolyhedron $Q \subset P$, we are given the formal data $\{F_z\}_{z\in P} \in \Gamma \relCont$ such that $F_z$ is holonomic for $z \in \Op Q \subset P$. Then, there exists a homotopy $F_{z,t} \in \Gamma \relCont$ satisfying the following:
	\begin{itemize}
		\item $F_{z,0} = F_z$ for all $z \in P$.
		\item $F_{z,1}$ is holonomic for all $z \in P$.
		\item $F_{z, t} = F_z$ for all time $t$, if $z \in \Op Q$.
	\end{itemize}
	Furthermore, the homotopy can be chosen to be arbitrary $C^0$-small in the base.
\end{lemma}
\begin{proof}
	The proof is essentially done via a cell-wise induction. Let us denote the base maps $u_z = \bs F_z$. \smallskip
	
	\paragraph{\underline{Setup}:} First, we fix a cover $\mathcal{U}$ of $M$ by open balls, so that $\mathcal{D}|_U$ is cotrivial for each $U\in \mathcal{U}$. Next, fix a `good cover' $\mathcal{O}$ of $\Sigma$ subordinate to the open cover $\{u_z^{-1}(U) \;|\; U\in\mathcal{U}, \, z \in P\}$. By a \emph{good cover}, we mean that $\mathcal{O}$ consists of contractible open charts of $\Sigma$, which is closed under finite (non-empty) intersections. Then, fix a triangulation $\{\Delta^\alpha\}$ of $\Sigma$ subordinate to $\mathcal{O}$. For each top-dimensional simplex $\Delta^\alpha$ choose $O_\alpha \in \mathcal{O}$ such that $\Delta^{\alpha}\subset O_{\alpha}$. For any other simplex $\Delta$ we denote
	$$O_\Delta = \bigcap_{\Delta\subset\Delta^\beta} O_\beta, \;\text{where $\Delta^\beta$ is top-dimensional.}$$
	Since any $\Delta$ is contained in at most finitely many simplices, $O_\Delta\in\mathcal{O}$ as it is a good cover. For each $z \in P$, let us also fix $U_{z,\Delta} \in \mathcal{U}$ so that $O_\Delta \subset u_z^{-1}(U_{z,\Delta})$. Throughout the proof, for any fixed simplex $\Delta$ we shall assume $\Op\Delta \subset O_\Delta$. Furthermore, we shall assert that the base maps of any homotopy of $F_{z}$ near $\Delta$ has their value in $U_{z,\Delta}$. Thus, the $C^0$-smallness of the homotopy can be controlled by a priori choosing the open cover $\mathcal{U}$ sufficiently small. \smallskip
	
	\paragraph{\underline{Induction Base Step}:} Fix a $0$-simplex $v\in \Sigma$. The map $j^1: \solCont \to \secCont$ is a \emph{local} weak homotopy equivalence by \autoref{corr:relContLocalWHE}. In particular, $j^1 : \solCont|_v \to \secCont|_v$ is a weak homotopy equivalence and consequently, we have a homotopy $F_{z,t}^v \in \secCont$ defined over $\Op(v)$ so that $$\text{$F_{z,0}^v =F_z$ and $F_{z,1}^v$ is holonomic on $\Op(v)$, for $z \in P$}.$$
	Furthermore, we can arrange so that the map $j^1$ is a homotopy equivalence, so that we get $F_{z,t} = F_z$ for all $z \in \Op Q$ as well.
	
	Now, by a standard argument using cutoff functions, we patch all these homotopies and get a homotopy $F_{z,t}^0 \in \secCont$ satisfying $$\text{$F_{z,1}^0$ is holonomic on $\Op\Sigma^{(0)}$ and $F_{z,t}^0 = F_z$ on $\Sigma\setminus \Op\Sigma^{(0)}$,}$$
	where $\Sigma^{(0)}$ is the $0$-skeleton of $\Sigma$. Furthermore, $F_{z,t} = F_z$ for $z \in \Op Q$ by construction. \smallskip
	
	\paragraph{\underline{Induction Hypothesis}:} Suppose for $i \ge 0$, we have obtained the homotopy $F_{z,t}^i \in \secCont$ so that $$\text{$F_{z,0}^i = F_{z,1}^{i-1}$,\; $F_{z,1}^i$ is holonomic on $\Op\Sigma^{(i)}$ \; and \; $F_{z,t}^i = F_{z,1}^{i-1}$ on $\Sigma \setminus \Op\Sigma^{(i)}$ for $t\in[0,1], \; z \in P$,}$$
	where $\Sigma^{(i)}$ is the $i$-skeleton of $\Sigma$. Furthermore, $F_{z,t}^i = F_z$ for $z \in \Op Q$ and the homotopies are arbitrary $C^0$-small in the base maps. For notational convenience, we set $F_{z,1}^{-1} = F_z$. \smallskip
	
	\paragraph{\underline{Induction Step}:} Fix an $i+1$-simplex $\Delta$. By the hypothesis of the theorem, we first obtain some arbitrary lifts $\tilde F_z^\Delta \in \secContTilde|_\Delta$ of $F_{z,1}^i|_{\Op\Delta} \in\secCont|_\Delta$, along the map $ev$. The lifts can be chosen to be continuous with respect to the parameter space $z \in P$. Since $F_{z,1}^i|_{\Op\partial\Delta}$ is holonomic by the induction hypothesis, applying \autoref{prop:relContExtensionHPrin} for the compact set $C=\partial\Delta$, we obtain a homotopy $$\tilde G_{z,t}^{\partial\Delta}\in\secContTilde|_{\partial\Delta}$$
	joining $\tilde F_z^\Delta|_{\Op\partial\Delta}$ to a \emph{holonomic} section $\tilde G_{z,1}^{\partial\Delta}\in \secContTilde|_{\partial\Delta}$. Furthermore, the homotopy satisfies $ev(\tilde G_{z,t}^{\partial \Delta}) = F_{z,1}^i|_{\Op \partial\Delta}$ for $t \in [0,1]$ and for $z \in P$. Using the flexibility of the sheaf $\secContTilde|_\Sigma$ we extend $\tilde G_{z,t}^{\partial\Delta}$ to a homotopy $\tilde G_{z,t}^\Delta \in \secContTilde|_\Delta$ defined on some $\tilde\Op\Delta$, so that
	$$\text{ $\tilde G_{z,1}^\Delta|_{\tilde\Op\partial\Delta} = \tilde G_{z,1}^{\partial\Delta}$ is holonomic, for $z \in P$.}$$
	Furthermore, we can arrange so that $\tilde{G}^\Delta_{z,t} = \tilde{F}_{z}^\Delta$ for all time $t$, whenever $z \in \Op Q$. Denoting $\tilde G_{z,1}^\Delta|_{\tilde \Op \partial\Delta} = j^1_{\tilde u_z^{\partial\Delta}}$ for smooth maps $\tilde u_z^{\partial\Delta}$ defined on $\tilde\Op\partial\Delta$, we consider the map of \emph{fibrations} as follows.
	\[\begin{tikzcd}
			\eta^{-1}\big(\tilde u_z^{\partial\Delta}\big) \arrow[hookrightarrow]{r} \arrow{d}[swap]{J} & \solContTilde|_{\Delta} \arrow{r}{\eta} \arrow{d}{J} & \solContTilde|_{\partial\Delta} \arrow{d}{J} & \tilde{u}_z^{\partial\Delta}\arrow[maps to]{d}\\
			\chi^{-1}\big(\tilde{G}_{z,1}^\Delta|_{\tilde\Op\partial\Delta}\big) \arrow[hookrightarrow]{r} & \secContTilde|_{\Delta} \arrow{r}[swap]{\chi} & \secContTilde|_{\partial\Delta} &  j^1_{\tilde u_z^{\partial\Delta}} = \tilde G^\Delta_{z,1}|_{\tilde\Op\partial\Delta}
		\end{tikzcd}\]
	Here $\eta$ is indeed a fibration, as $\solContTilde|_\Sigma$ is flexible by \autoref{thm:mainFlexibilityTheorem}. Now, the rightmost and the middle $J=j^1$ are \emph{local} weak homotopy equivalences by \autoref{thm:microflexibleLocalWHESheafTheorem} and \autoref{lemma:jetLiftingIsoCont}. Hence, they are in fact weak homotopy equivalences by an application of the sheaf homomorphism theorem (\autoref{thm:sheafHomoTheorem}). By the $5$-lemma argument, we then have $$J:\eta^{-1}(\tilde u_z^{\partial\Delta}) \to \chi^{-1}\big(\tilde{G}_{z,1}^\Delta|_{\tilde\Op\partial\Delta}\big)$$
	is a weak homotopy equivalence. Now, $\tilde{G}_{z,1}^\Delta \in \chi^{-1}\big(\tilde{G}_{z,1}^\Delta|_{\tilde\Op\partial\Delta}\big)$. Hence, we have a path $\tilde H_{z,t} \in \chi^{-1}\big(\tilde{G}_{z,1}^\Delta|_{\tilde\Op\partial\Delta}\big)$ joining $\tilde{G}_{z,1}^{\Delta}$ to some \emph{holonomic} section $\tilde H_{z,1}$. In particular, this homotopy is fixed on $\tilde\Op\partial\Delta$. Furthermore, we can make sure that this weak homotopy equivalence is in fact a homotopy equivalence and hence, we can assume $H_{z,t}$ is constant for $z \in \Op Q$. We get the concatenated homotopy
	$$\tilde{F}_{z,t} : \tilde F_z^\Delta \sim_{\tilde G_{z,t}^\Delta} \tilde G_{z,1}^\Delta \sim_{\tilde H_{z,t}} \tilde H_{z,1}, \quad z \in P,$$
	and set $F_{z,t}^\Delta = ev(\tilde F_{z,t})$. Clearly, $F_{z,0}^\Delta = F_{z,1}^{i}$ on $\Op\partial\Delta$ and $F_{z,1}^\Delta$ is holonomic on $\Op\Delta$. Also, $F_{z,t} = F_z$ for $z \in \Op Q$.
	
	Using a standard cutoff function argument, we patch these homotopies together and get the homotopy $F_{z,t}^{i+1}\in\secCont$ satisfying $$\text{$F_{z,0}^{i+1} = F_{z,1}^i$, \; $F_{z,1}^{i+1}$ is holonomic on $\Op\Sigma^{(i)}$, \; and $F_{z,t}^{i+1}=F_{z,1}^{i}$ on $\Sigma \setminus \Op\Sigma^{(i+1)}$,}$$
	where $\Sigma^{(i+1)}$ is the $i+1$-skeleton of $\Sigma$. Furthermore, $F_{z,t} = F_z$ for $z \in \Op Q$. \smallskip
	
	The induction terminates once we have obtained the homotopy $F_{z,t}^k$, where $k = \dim\Sigma$. We end up with a sequence of homotopies in $\secCont$. Concatenating all of them we have the homotopy
	$$F_{z,t} : F_z = F_{z,1}^{-1} \sim_{F_{z,t}^0} F_{z,1}^0 \sim_{F_{z,t}^1} F_{z,1}^1\sim\cdots\sim_{F_{z,t}^{k-1}} F_{z,1}^{k-1}\sim_{F_{z,t}^k} F_{z,1}^k.$$
	Clearly $F_{z,t}\in\secCont$ is the desired homotopy joining $F_z$ to a \emph{holonomic} section $F_{z,1}=F_{z,1}^k\in\secCont$, where $F_{z,t} = F_z$ for $z \in \Op Q$. Since at each step the homotopy can be chosen to be arbitrary $C^0$-small and since there are only finitely many steps, we see that $F_{z,t}$ can be made arbitrary $C^0$-small in the base map as well. This concludes the proof.
\end{proof}

\begin{remark}
	In \cite{duPlessisHPrinciple}, the author has obtained similar $h$-principle for \emph{open} relations which admit $\diff$-invariant ``open extensions''. We also refer to \cite[pg. 127-128]{eliashbergBook} where parametric $h$-principle is obtained under stronger hypothesis.
\end{remark}

\subsubsection{$h$-principle for $\relICont$}
In order to obtain the $h$-principle for the relation $\relICont$, we consider the class $\mathcal{S}^\prime$ of maps $u : \Sigma \to M$, which are $(d\lambda^s)$-regular immersions (\autoref{defn:contOpenRegular}), and furthermore satisfy the (open) condition $$\rk (d\lambda^s \circ du) \ge \cork K.$$
Clearly, $\mathcal{S}^\prime$ is the solution space of an open relation $S^\prime \subset J^1(\Sigma, M)$, where $S^\prime$ is a subrelation of $S$. Note that any $K$-contact immersion satisfying the above inequality is $K$-isocontact. Conversely, every $K$-isocontact immersion satisfies the above inequality (compare \hyperref[defn:relCont:4]{(4)} of \autoref{defn:relCont}). Thus, the $S^\prime$-regular solutions of the operator $\opCont$ are precisely the $\Omega$-regular $K$-\emph{iso}contact immersions $\Sigma \to M$. Proceeding as in the case of $\relCont$, we can prove \autoref{thm:hPrinGenExtnHoriz} for $\relICont$ as well.

\subsubsection{$h$-principle for $\relHor$}
For the special case of $\relHor$ (for which $K=T\Sigma$), \autoref{thm:hPrinGenExtnHoriz} can be compared with the $h$-principle for `overregular' maps (Approximation Theorem in \cite[pg. 258]{gromovCCMetric}). In general, $ev : \relHorTilde|_\Sigma\to \relHor$ fails to be surjective, and \emph{overregular maps} are precisely the solutions to $ev\big(\relHorTilde|_\Sigma\big)$.
	
If $\mathcal{D}$ is a contact distribution then every horizontal immersion $\Sigma\to (M,\mathcal D)$ is regular. Moreover, one does not require overregularity condition to obtain the $h$-principle for Legendrian immersions \cite{duchampLegendre}. Indeed, by embedding $\Sigma$ in the contact manifold $\tilde{\Sigma}=J^1(\Sigma,\mathbb R)$ one observes that $\relIContTilde$ is a microextension of $\relHor$ and hence $\relHor$ satisfies the $h$-principle.\medskip

In general, we can prove the following.
\begin{theorem}\label{thm:hPrinGenExtnContact}
	Suppose $\Sigma$ is an arbitrary manifold. Let $(\tilde{\Sigma},\tilde{K})=(J^1(\Sigma,\mathbb{R}), \xi_{std})$ denote the first jet space, with the canonical contact structure. Then $\relIContTilde$ is an extension of $\relHor$. If $\relIContTilde$ is a microextension of $\relHor$ then the relation $\relHor$ satisfies the $C^0$-dense $h$-principle.
\end{theorem}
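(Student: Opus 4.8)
The plan is to repeat the cell-by-cell induction behind \autoref{thm:hPrinGenExtn} --- that is, the proof of \autoref{thm:hPrinParametricRelative} followed by the standard reduction of \cite{eliashbergBook} --- with $(\tilde\Sigma, \tilde K) = (J^1(\Sigma,\bbR), \xi_{std})$ in place of $(\Sigma\times\bbR, K\times\bbR)$ and with $\relHor$ in the role of $\relCont$. First note that $\Sigma$ sits inside $J^1(\Sigma,\bbR)$ as the zero section of the vector bundle $J^1(\Sigma,\bbR)\to\Sigma$, that this zero section is Legendrian for $\xi_{std}$, and hence $\xi_{std}|_\Sigma\cap T\Sigma = T\Sigma$; so the restriction morphism $ev:\relContTilde|_\Sigma\to\relHor$ is defined just as in \autoref{sec:revisitKContact}, and it indeed lands in $\relHor$ because a linear subspace of an $\Omega$-regular subspace of $\calD_y$ is again $\Omega$-regular. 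By hypothesis $\relContTilde$ is a microextension of $\relHor$, which gives surjectivity of $ev:\Gamma\relContTilde|_O\to\Gamma\relHor|_O$ for contractible $O\subset\Sigma$ --- the single input that drives the induction step.

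The other structural ingredients remain in force. The jet maps $j^1:\solHor\to\secHor$ over $\Sigma$ and $j^1:\solContTilde\to\secContTilde$ over $\tilde\Sigma$ are \emph{local} weak homotopy equivalences (\autoref{corr:relContLocalWHE}), and $\solHor,\solContTilde$ are microflexible (\autoref{obs:relContMicroflexibleLocalWHE}). The point where the choice $\tilde\Sigma = J^1(\Sigma,\bbR)$ is essential is the \emph{flexibility} of $\solContTilde$: since $\xi_{std}$ is a contact structure, the contactomorphism group of $(J^1(\Sigma,\bbR),\xi_{std})$ acts on $C^\infty(\tilde\Sigma,M)$ by precomposition, preserving $\tilde K$-contactness and $\Omega$-regularity, and it sharply moves the positive-codimensional zero section, so \autoref{thm:mainFlexibilityTheorem} gives flexibility (this is \autoref{thm:hPrinContOpenAlpha}). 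With these, the base step and the inductive step of \autoref{thm:hPrinParametricRelative} carry over unchanged: fill the formal section to a holonomic one over the $i$-skeleton, lift near each $(i+1)$-simplex to $\relContTilde$ via the microextension, homotope to holonomy over the boundary sphere, extend inward by the flexibility of $\solContTilde$, and conclude with the $5$-lemma applied to the fibration square of $j^1$; the $C^0$-control at each simplex yields the $C^0$-density.

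The only ingredient that genuinely needs re-verification for this extension is \autoref{prop:relContExtensionHPrin}: for $C$ compact in a chart of $\Sigma$, an open $U\subset M$ with $\calD|_U$ cotrivial, and an $\Omega$-regular horizontal immersion $u:\Op C\to U$, the map $ev^{-1}(u)\to ev^{-1}(j^1_u)$ must be $\pi_0$-surjective. Its Step $1$ and Step $3$ are model-independent: lift an arbitrary formal extension of $j^1_u$ to a high-order jet, integrate it by the Taylor-series argument and \autoref{thm:consistenInversion} to a regular solution $\bar u$ on $\tilde\Op C$ with the prescribed $1$-jet along $C$, then push the resulting $1$-jet homotopy into $\relContTilde$ using a fiberwise retraction of a tubular neighborhood. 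Step $2$, however, used the literal rescaling $(x,s)\mapsto(x,ts)$ of the $\bbR$-factor of $\Sigma\times\bbR$; here one replaces it by the fiberwise scaling $\xi\mapsto t\xi$ of the vector bundle $J^1(\Sigma,\bbR)\to\Sigma$ near the zero section, producing a homotopy from $\bar u$ to $\pi^*(\bar u|_\Sigma)$ that is constant over $C$, where $\pi$ is the bundle projection. I expect this substitution to be the main --- though routine --- technical point: one must pick the fiberwise scaling, the cotrivialization of $\calD|_U$, and the auxiliary parallel transport on $J^1(W,U)$ compatibly, so that the concatenated $1$-jet homotopy stays inside a tubular neighborhood of $\relContTilde$ and is stationary on $C$. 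Granting \autoref{prop:relContExtensionHPrin} in this form, the rest of the proof of \autoref{thm:hPrinGenExtnContact} is word-for-word that of \autoref{thm:hPrinGenExtn}.
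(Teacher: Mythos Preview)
Your proposal is correct and matches the paper's (implicit) approach: the paper states \autoref{thm:hPrinGenExtnContact} without a separate proof, relying on the reader to rerun the argument of \autoref{thm:hPrinGenExtn}/\autoref{thm:hPrinParametricRelative} with the contact pair $(J^1(\Sigma,\bbR),\xi_{std})$, and you have correctly pinpointed the two places that change --- flexibility of $\solContTilde|_\Sigma$ now comes from contactomorphisms sharply moving the Legendrian zero section (\autoref{exmp:sharplyMovingDiffeo}(2) with \autoref{thm:mainFlexibilityTheorem}), and the $\bbR$-rescaling in Step~2 of \autoref{prop:relContExtensionHPrin} is replaced by the fiberwise scaling of the vector bundle $J^1(\Sigma,\bbR)\to\Sigma$, which fixes the zero section pointwise.
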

\begin{proof}
	Since $\Sigma$ canonically embeds in the 1-jet space $\tilde{\Sigma}=J^1(\Sigma,\mathbb{R})$ as a Legendrian submanifold, it is easy to note that $\relIContTilde$ is an extension of $\relHor$. The sheaf $\solIContTilde$ is invariant under the natural action of contactomorphisms of $J^1(\Sigma,\mathbb R)$; furthermore, by \autoref{exmp:sharplyMovingDiffeo} (\ref{exmp:sharplyMovingDiffeo:2}), the contact diffeotopies of $J^1(\Sigma,\mathbb R)$ sharply move $\Sigma$ in $\tilde \Sigma$. Then, the flexibility of $\solIContTilde|_{\Sigma_0}$ follows from \autoref{thm:mainFlexibilityTheorem}. Arguing as in the proof of \autoref{lemma:hPrinParametricRelative} we obtain the $h$-principle for $\relHor$.
\end{proof}

\subsection{Proof of \autoref{lemma:jetLiftingIsoCont}}\label{sec:jetLiftingLemmaProof}
To simplify the notation, we assume that $K=T\Sigma$, i.e., we prove the statement for the relation $\relHor$. The argument for a general $K$ is similar, albeit cumbersome. As the lemma is local in nature, without loss of generality we assume $\mathcal{D}$ is cotrivializable and hence let us write $\mathcal{D}=\bigcap_{s=1}^p\ker\lambda^s$ for $1$-forms $\lambda^1,\ldots,\lambda^p$ on $M$. Denote the tuples $$\lambda = (\lambda^s)\in\Omega^1(M,\mathbb{R}^p) \text{ and } d\lambda = (d\lambda^s)\in\Omega^2(M,\mathbb{R}^p).$$
We need to consider the three operators:
$$u \mapsto u^*\lambda, \qquad u\mapsto u^*d\lambda, \qquad\text{the exterior derivative operator, $d : \Omega^1(\Sigma,\mathbb{R}^p) \to \Omega^2(\Sigma,\mathbb{R}^p)$.}$$
Their respective symbols are as follows.
\begin{itemize}
	\item We have the bundle map $\Delta_\lambda : J^1(\Sigma,M) \to \Omega^1(\Sigma, \mathbb{R}^p)$ so that, $\Delta_\lambda\big(j^1_u\big) = u^*\lambda = \big(u^*\lambda^s\big)$. Explicitly, $$\Delta_\lambda(x,y, F:T_x\Sigma\to T_y M) = \big(x, F\circ\lambda|_y\big).$$
	
	\item We have the bundle map $\Delta_{d \lambda} : J^1(\Sigma, M) \to \Omega^1(\Sigma, \mathbb{R}^p)$ so that, $\Delta_{d\lambda}\big(j^1_u\big) = u^*d\lambda = \big(u^*d\lambda^s\big)$. Explicitly, $$\Delta_{d\lambda}(x,y, F:T_x\Sigma\to T_y M) = \big(x, F^*d\lambda|_y\big).$$
	
	\item We have the bundle map $\Delta_d : \Omega^1(\Sigma,\mathbb{R}^p)^{(1)} \to \Omega^2(\Sigma,M)$ so that, $\Delta_d(j^1_\alpha) = d\alpha$. Explicitly, $$\Delta_d\big(x,\alpha, F:T_x\Sigma\to \hom(T_x\Sigma,\mathbb{R}^p)\big) = \big(x, (X\wedge Y) \mapsto F(X)(Y) - F(Y)(X)\big).$$
\end{itemize}

\paragraph{\bfseries Jet Prolongation of Symbols:} Recall that given some arbitrary $r^\text{th}$-order operator $\mathfrak{D}:\Gamma X\to \Gamma G$ represented by the symbol $\Delta : X^{(r)} \to G$ as, $\Delta(j^r_u) = \mathfrak{D}(u)$, we have the $\alpha$-jet prolongation, $\Delta^{(\alpha)} : X^{(r+\alpha)} \to G^{(\alpha)}$ defined as $$\Delta^{(\alpha)}(j^{r+\alpha)}_u(x)) = j^\alpha_{\mathfrak{D}(u)}(x).$$
Then, for any $\alpha\ge \beta$ we have $p^{\alpha}_\beta \circ \Delta^{(\alpha)} = \Delta^\beta \circ p^{r + \alpha}_{r+\beta}$. Let us observe the following interplay between the symbols of the operators introduced above.
\begin{itemize}
	\item We have the commutative diagram
	\[\begin{tikzcd}
		J^{\alpha+1}(\Sigma, M) \arrow{r}{\Delta_\lambda^{(\alpha)}} \arrow{d}[swap]{p^{\alpha+1}_\alpha} & \Omega^1(\Sigma, \mathbb{R}^p)^{(\alpha)} \arrow{d}{\Delta_d^{(\alpha-1)}} \\
		J^{\alpha}(\Sigma, M) \arrow{r}[swap]{\Delta_{d\lambda}^{(\alpha-1)}} & \Omega^2(\Sigma, \mathbb{R}^p)^{(\alpha-1)}
	\end{tikzcd}\]
	Indeed, we observe $$\Delta_d^{(\alpha-1)}\circ \Delta_\lambda^{(\alpha)}\big(j^{\alpha+1}_u(x)\big) = \Delta_d^{(\alpha-1)}\big(j^\alpha_{u^*\lambda}(x)\big) = j^{\alpha-1}_{d\big(u^*\lambda\big)}(x) = j^{\alpha-1}_{u^*d\lambda}(x) = \Delta_{d\lambda}^{(\alpha-1)}\big(j^{\alpha}_u(x)\big),$$
	and hence, we get $$\Delta_d^{(\alpha-1)}\circ \Delta_\lambda^{(\alpha)} = \Delta_{d\lambda}^{(\alpha-1)}\circ p^{\alpha+1}_\alpha.$$
	
	\item We have the two commutative diagrams
	\[\begin{tikzcd}
		J^{\alpha+1}(\Sigma,M) \arrow{r}{\Delta_\lambda^{(\alpha)}} \arrow{d}[swap]{p^{\alpha+1}_\alpha} & \Omega^1(\Sigma,\mathbb{R}^p)^{(\alpha)} \arrow{d}{p^\alpha_{\alpha-1}} \\
		J^\alpha(\Sigma,M) \arrow{r}[swap]{\Delta_\lambda^{(\alpha-1)}} & \Omega^1(\Sigma,\mathbb{R}^p)^{(\alpha-1)}
	\end{tikzcd}\quad \text{and} \quad
	\begin{tikzcd}
		J^{\alpha+1}(\Sigma,M) \arrow{r}{\Delta_{d\lambda}^{(\alpha)}} \arrow{d}[swap]{p^{\alpha+1}_\alpha} & \Omega^2(\Sigma,\mathbb{R}^p)^{(\alpha)} \arrow{d}{p^\alpha_{\alpha-1}} \\
		J^\alpha(\Sigma,M) \arrow{r}[swap]{\Delta_{d\lambda}^{(\alpha-1)}} & \Omega^2(\Sigma,\mathbb{R}^p)^{(\alpha-1)}
	\end{tikzcd}\]
\end{itemize}

Next, let us fix $\mathcal{R}_{d\lambda} \subset J^1(\Sigma, M)$ representing the $(d\lambda^s)$-regular immersions $\Sigma \to M$, i.e,
$$\mathcal{R}_{d\lambda} = \Big\{(x,y,F:T_x\Sigma \to T_yM) \;\Big|\; \text{ $F$ is injective and $(d\lambda^s)$-regular} \Big\}.$$
Recall that $\mathcal{R}_\alpha := \relHor_\alpha \subset J^{\alpha+1}(\Sigma,M)$ is given as,
$$\mathcal{R}_\alpha = \Big\{j^{\alpha+1}_u(x)\in J^{\alpha+1}(\Sigma,M)|_x \;\Big|\; \text{$j^\alpha_{u^*\lambda}(x) = 0$ and $u$ is $(d\lambda^s)$-regular}\Big\}.$$
Hence, we can identify $\mathcal{R}_\alpha$ as $$\mathcal{R}_\alpha = \ker\big(\Delta_\lambda^{(\alpha)}\big) \cap \big(p^{\alpha+1}_1\big)^{-1}(\mathcal{R}_{d\lambda}) \subset J^{\alpha+1}(\Sigma,M),$$
where $p^{\alpha+1}_1 :J^{\alpha+1}(\Sigma,M)\to J^1(\Sigma,M)$ is the natural projection map. We denote a sub-relation, $$\bar{\mathcal{R}}_\alpha = \mathcal{R}_\alpha \cap \ker\big(\Delta_{d\lambda}^{(\alpha)}\big) \subset \mathcal{R}_\alpha.$$
In particular, observe that $\bar{\mathcal{R}}_0$ is then precisely $\relHor$, i.e, the relation of $\Omega$-regular, horizontal immersions $\Sigma\to M$. The proof of \autoref{lemma:jetLiftingIsoCont} follows from the next two results.
\begin{sublemma}\label{lemma:jetLiftingIsoCont:Lambda}
	For any $\alpha\ge 0$, we have, $\bar{\mathcal{R}}_\alpha = p^{\alpha+2}_{\alpha+1}\big(\mathcal{R}_{\alpha+1}\big)$ and for each $(x,y)\in\Sigma\times M$, the fiber of $p^{\alpha+2}_{\alpha + 1} : \mathcal{R}_{\alpha+1}|_{(x,y)}\to \bar{\mathcal{R}}_\alpha|_{(x,y)}$ is affine. Furthermore, any section of $\bar{\mathcal{R}}_{\alpha}|_O$, over some contractible charts $O\subset \Sigma$, can be lifted to a section of $\mathcal{R}_{\alpha+1}|_O$ along $p^{\alpha+2}_{\alpha+1}$.
\end{sublemma}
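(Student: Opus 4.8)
The assertion is local, so I would fix a contractible chart $O\subset\Sigma$ with coordinates centred at the base point together with a chart of $M$ with coordinates $y=(y^k)$ over which $\calD$ is cotrivialized by $\lambda^1,\dots,\lambda^p$; write $\lambda^s=\sum_k\lambda^s_k\,dy^k$. The plan is to understand precisely how modifying the pure order-$(\alpha+2)$ part of a jet $j^{\alpha+2}_u(x)$ — i.e. an element of the affine fibre of $p^{\alpha+2}_{\alpha+1}$, modelled on $\Sym^{\alpha+2}T^*_x\Sigma\otimes T_yM$ — changes the $(\alpha+1)$-jet $j^{\alpha+1}_{u^*\lambda}(x)$. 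The inclusion $p^{\alpha+2}_{\alpha+1}(\calR_{\alpha+1})\subseteq\bar\calR_\alpha$ is purely formal and I would read it off the symbol identities above: if $\Delta_\lambda^{(\alpha+1)}(j^{\alpha+2}_u(x))=j^{\alpha+1}_{u^*\lambda}(x)=0$ and $u$ is $(d\lambda^s)$-regular, then $(d\lambda^s)$-regularity passes to $j^{\alpha+1}_u(x)$ (it depends only on $j^1_u$), the identity $p^{\alpha+1}_\alpha\circ\Delta_\lambda^{(\alpha+1)}=\Delta_\lambda^{(\alpha)}\circ p^{\alpha+2}_{\alpha+1}$ gives $j^\alpha_{u^*\lambda}(x)=0$, hence $j^{\alpha+1}_u(x)\in\calR_\alpha$, and $\Delta_d^{(\alpha)}\circ\Delta_\lambda^{(\alpha+1)}=\Delta_{d\lambda}^{(\alpha)}\circ p^{\alpha+2}_{\alpha+1}$ together with fibrewise linearity of $\Delta_d^{(\alpha)}$ forces $\Delta_{d\lambda}^{(\alpha)}(j^{\alpha+1}_u(x))=0$, i.e. $j^{\alpha+1}_u(x)\in\ker\Delta_{d\lambda}^{(\alpha)}$; combining, $j^{\alpha+1}_u(x)\in\bar\calR_\alpha$.

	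For the reverse inclusion I would fix $\phi=j^{\alpha+1}_u(x)\in\bar\calR_\alpha$, say $y=u(x)$, and choose any lift $j^{\alpha+2}_u(x)$. Since $j^\alpha_{u^*\lambda}(x)=0$, the Taylor polynomial of each $u^*\lambda^s$ at $x$ reduces, modulo terms of order $\ge\alpha+2$, to its homogeneous degree-$(\alpha+1)$ part $P^s$, a $1$-form with homogeneous polynomial coefficients. The condition $j^\alpha_{u^*d\lambda}(x)=0$, together with $u^*d\lambda^s=d(u^*\lambda^s)$ and the vanishing of the lower-order terms, forces $dP^s=0$ in the polynomial de Rham complex; by the algebraic Poincar\'e lemma for the homogeneous polynomial de Rham complex on $\bbR^n$ there exist homogeneous degree-$(\alpha+2)$ polynomials $g_s$ with $P^s=dg_s$, and the standard homotopy operator gives a choice of $g_s$ depending linearly, hence continuously, on $P^s$. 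A short computation — using that a correction $\tau=(\tau^k)\in\Sym^{\alpha+2}T^*_x\Sigma\otimes\bbR^m$ vanishes to order $\alpha+1$ at $x$, so $\lambda^s_k\circ u$ may be frozen at the value $\lambda^s_k(y)$ modulo order $\ge\alpha+2$ — shows that the degree-$(\alpha+1)$ part of $(u+\tau)^*\lambda^s$ is $P^s+\sum_k\lambda^s_k(y)\,d\tau^k$, while the lower-order parts remain zero. Since the matrix $\big(\lambda^s_k(y)\big)_{s,k}$ has rank $p$ (the $\lambda^s$ are pointwise independent, so $p\le\dim M$), the linear system $\sum_k\lambda^s_k(y)\,\tau^k=-g_s$, $s=1,\dots,p$, is solvable, and any solution $\tau$ makes $j^{\alpha+2}_{u+\tau}(x)\in\calR_{\alpha+1}$ and projects onto $\phi$. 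Moreover, within the affine fibre of $p^{\alpha+2}_{\alpha+1}$ over $\phi$ the admissible corrections are exactly those $\tau$ with $P^s+\sum_k\lambda^s_k(y)\,d\tau^k=0$, an affine-linear condition; hence the fibre of $p^{\alpha+2}_{\alpha+1}:\calR_{\alpha+1}|_{(x,y)}\to\bar\calR_\alpha|_{(x,y)}$ over $\phi$ is affine.

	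Finally, the section-lifting statement should follow because $\calR_{\alpha+1}\to\bar\calR_\alpha$ is a fibrewise-affine surjection whose data — $P^s$ (a polynomial function of the $(\alpha+1)$-jet), the coefficients $\lambda^s_k(y)$, the homotopy operator, and a continuous right inverse of $\big(\lambda^s_k(y)\big)$ such as the Moore--Penrose pseudoinverse — vary continuously over the base; hence it is a locally trivial affine bundle. Over the contractible chart $O$ this bundle is trivial, so it admits a section: concretely, given a section $\sigma$ of $\bar\calR_\alpha|_O$ one first lifts it to a section of $J^{\alpha+2}(\Sigma,M)|_O$ along $p^{\alpha+2}_{\alpha+1}$ (e.g. by extending the Taylor data by zero at order $\alpha+2$) and then applies the pointwise correction $\tau$ constructed above, chosen continuously from those explicit formulas. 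I expect the only genuinely non-formal ingredient to be the use of the algebraic Poincar\'e lemma in the second step — recognising that the curvature-type condition $j^\alpha_{u^*d\lambda}(x)=0$ is precisely what makes the top-order obstruction $P^s$ exact, and hence absorbable by a reparametrisation of order $\alpha+2$; the rest is bookkeeping with jet prolongations and linear algebra stemming from the pointwise independence of $\lambda^1,\dots,\lambda^p$.
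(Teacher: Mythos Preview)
Your argument is correct and follows the same route as the paper: both read the inclusion $p^{\alpha+2}_{\alpha+1}(\calR_{\alpha+1})\subset\bar\calR_\alpha$ off the commutative squares of symbol prolongations, and for the converse both show that the top-order obstruction in $\Sym^{\alpha+1}T_x^*\Sigma\otimes T_x^*\Sigma\otimes\bbR^p$ actually lies in $\Sym^{\alpha+2}T_x^*\Sigma\otimes\bbR^p$ and can then be killed by an affine choice of the order-$(\alpha+2)$ part using the full rank of $\lambda$. The only difference is packaging: the paper verifies the symmetry by computing directly that the antisymmetric component $S_\alpha$ vanishes, whereas you phrase the identical fact as closedness of the homogeneous polynomial $1$-form $P^s$ and invoke the algebraic Poincar\'e lemma to produce the primitive $g_s$.
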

\begin{sublemma}\label{lemma:jetLiftingIsoCont:OmegaRegularity}
	For any $\alpha\ge 0$, the map $p^{\alpha+2}_{\alpha +1}:\bar{\mathcal{R}}_{\alpha+1}|_{(x,y)} \to \bar{\mathcal{R}}_\alpha|_{(x,y)}$ is surjective, with affine fibers, for each $(x,y)\in\Sigma\times M$. Furthermore, any section of $\bar{\mathcal{R}}_\alpha|_O$ over some contractible chart $O\subset\Sigma$ can be lifted to a section of $\bar{\mathcal{R}}_{\alpha+1}|_O$ along $p^{\alpha+2}_{\alpha+1}$.
\end{sublemma}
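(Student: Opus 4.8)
The plan is to reduce the statement to a symbol computation in which the surjectivity we need is supplied by $\Omega$-regularity. As for the previous sublemma the assertion is local, so I work at a fixed $(x,y)\in\Sigma\times M$ (and over a fixed contractible chart for the parametric part) with $\calD$ cotrivialized; the general-$K$ case only requires restricting all the forms below to $K$.

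Fix $\sigma\in\bar\calR_\alpha|_{(x,y)}$ and let $F\colon T_x\Sigma\to T_yM$ be its underlying $1$-jet, so that $F(T_x\Sigma)\subset\calD_y$ and $F$ is $\Omega$-regular. By \autoref{lemma:jetLiftingIsoCont:Lambda} there is $\tau\in\calR_{\alpha+1}|_{(x,y)}$ with $p^{\alpha+2}_{\alpha+1}(\tau)=\sigma$. A top-order correction $h\in S^{\alpha+2}T_x^*\Sigma\otimes T_yM$ does not change the $1$-jet of $\tau$ (since $\alpha+2\ge 2$) and alters $\Delta_\lambda^{(\alpha+1)}$ in a way depending on $h$ only through $(\mathrm{id}\otimes\lambda)(h)$, hence not at all when $h$ is valued in $\ker\lambda=\calD_y$; thus the fibre of $p^{\alpha+2}_{\alpha+1}\colon\calR_{\alpha+1}|_{(x,y)}\to\bar\calR_\alpha|_{(x,y)}$ over $\sigma$ is $\tau+\bigl(S^{\alpha+2}T_x^*\Sigma\otimes\calD_y\bigr)$, and the only remaining condition to arrange is $\Delta_{d\lambda}^{(\alpha+1)}(\tau+h)=0$ for some $h\in S^{\alpha+2}T_x^*\Sigma\otimes\calD_y$.

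Two facts make this possible. First, since $\Delta_\lambda^{(\alpha+1)}(\tau)=0$, any $u$ representing $\tau$ has $u^*\lambda$ vanishing to order $\alpha+1$ at $x$, so $u^*d\lambda=d(u^*\lambda)$ has vanishing $\alpha$-jet, and by the commutative square relating $\Delta_\lambda$, $\Delta_d$, $\Delta_{d\lambda}$ one gets $\Delta_{d\lambda}^{(\alpha+1)}(\tau)=\bar d(\mu)$, where $\mu\in S^{\alpha+2}T_x^*\Sigma\otimes T_x^*\Sigma\otimes\bbR^p$ is the leading part of the $(\alpha+2)$-jet of $u^*\lambda$ and $\bar d\colon S^{\alpha+2}T_x^*\Sigma\otimes T_x^*\Sigma\otimes\bbR^p\to S^{\alpha+1}T_x^*\Sigma\otimes\Lambda^2T_x^*\Sigma\otimes\bbR^p$ is the antisymmetrization map, i.e.\ the symbol of $d$. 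Second, the first-order operator $u\mapsto u^*d\lambda=d(u^*\lambda)$ has linearization $\phi\mapsto d\bigl(u^*(\iota_\phi d\lambda)\bigr)$, whose principal symbol at $F$ is $\xi\otimes v\mapsto\xi\wedge F^*(\iota_v d\lambda)$; because $F(T_x\Sigma)\subset\calD_y$, for $v\in\calD_y$ one has $F^*(\iota_v d\lambda)=F^*(\iota_v\Omega)=:\rho_F(v)$, and consequently the top-order variation $h\mapsto\Delta_{d\lambda}^{(\alpha+1)}(\tau+h)-\Delta_{d\lambda}^{(\alpha+1)}(\tau)$, restricted to $S^{\alpha+2}T_x^*\Sigma\otimes\calD_y$, equals $\bar d\circ\bigl(\mathrm{id}_{S^{\alpha+2}T_x^*\Sigma}\otimes\rho_F\bigr)$. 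Now $\Omega$-regularity of $F$ says precisely that $\rho_F\colon\calD_y\to\hom(T_x\Sigma,\bbR^p)$ is surjective, hence so is $\mathrm{id}\otimes\rho_F\colon S^{\alpha+2}T_x^*\Sigma\otimes\calD_y\to S^{\alpha+2}T_x^*\Sigma\otimes T_x^*\Sigma\otimes\bbR^p$, and we may choose $h$ with $(\mathrm{id}\otimes\rho_F)(h)=-\mu$. Then $\tau'=\tau+h\in\calR_{\alpha+1}$ satisfies $\Delta_{d\lambda}^{(\alpha+1)}(\tau')=\bar d(\mu)+\bar d(-\mu)=0$ and projects to $\sigma$, so $\tau'\in\bar\calR_{\alpha+1}|_{(x,y)}$ and $p^{\alpha+2}_{\alpha+1}$ is onto $\bar\calR_\alpha|_{(x,y)}$; the fibre over $\sigma$ is the coset $\tau'+\ker\bigl(\bar d\circ(\mathrm{id}\otimes\rho_F)\bigr)$ inside $S^{\alpha+2}T_x^*\Sigma\otimes\calD_y$, hence affine.

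The parametric statement over a contractible chart $O$ is the same argument carried out in families: lift the given section of $\bar\calR_\alpha|_O$ to a section $\tau$ of $\calR_{\alpha+1}|_O$ via \autoref{lemma:jetLiftingIsoCont:Lambda}, and then observe that the admissible corrections $h$ form, fibrewise over $O$, the nonempty affine subspace of $S^{\alpha+2}T^*\Sigma\otimes\calD$ cut out by $\bar d\circ(\mathrm{id}\otimes\rho_{F_x})(h)=-\bar d(\mu_x)$; since $\rho_{F_x}$ is surjective for every $x\in O$, this has constant rank, so the solution sets assemble into a genuine affine sub-bundle, which admits a section $h$ over the contractible $O$, and $\tau+h$ is the required lift. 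The delicate point throughout is the symbol bookkeeping of the third paragraph — recognizing the residual obstruction $\Delta_{d\lambda}^{(\alpha+1)}(\tau)$ as lying in the image of $\bar d$, and identifying the $\calD_y$-directed top-order variation of $u\mapsto u^*d\lambda$ with $\bar d\circ(\mathrm{id}\otimes\rho_F)$; once these identifications are in place, the surjectivity of $\rho_F$ built into the definition of $\Omega$-regularity finishes everything.
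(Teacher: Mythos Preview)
Your proof is correct and takes a genuinely different route from the paper's. The paper works in coordinates: it writes out the full system of affine equations for the unknown top symbol $Q\in\Sym^{\alpha+2}T_x^*\Sigma\otimes T_yM$ coming from both the $\lambda$- and the $d\lambda$-conditions, orders the multi-indices lexicographically, and solves the system in a triangular fashion, invoking $\Omega$-regularity at each step to guarantee that the relevant subsystem has full rank. Your argument is more conceptual: you first lift to $\calR_{\alpha+1}$ via the previous sublemma, then recognize the residual obstruction $\Delta_{d\lambda}^{(\alpha+1)}(\tau)$ as $\bar d(\mu)$ (by passing through a representative one jet higher), identify the effect of a $\calD_y$-valued top-order correction as $\bar d\circ(\mathrm{id}\otimes\rho_F)$, and kill the obstruction in one stroke using surjectivity of $\rho_F$. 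This avoids coordinates and the triangular induction entirely. One thing the paper's explicit computation makes visible, which your symbol argument hides, is that the full strength of $\Omega$-regularity is not used: in the triangular system the last basis vector never appears on the left-hand side, so a codimension-one $\Omega$-regular subspace of $\im F$ already suffices (this is the content of the paper's Remark~\ref{rmk:jetLiftingIsoContRegularityLowerDim}). Your argument, as written, uses the surjectivity of $\rho_F$ on all of $T_x^*\Sigma\otimes\bbR^p$ and so does not immediately yield this refinement.
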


\begin{proof}[Proof of \autoref{lemma:jetLiftingIsoCont}]
	We have the following ladder-like schematic representation of the proof.
	\[
	\tikzset{%
		symbol/.style={
			draw=none,
			every to/.append style={
				edge node={node [sloped, allow upside down, auto=false]{$#1$}}
			},
		},
	}
	\begin{tikzcd}
		J^{\alpha+1}(\Sigma,M) \arrow{r}{p^{\alpha+1}_\alpha} & J^{\alpha}(\Sigma,M) \arrow{r} &\cdots \arrow{r} & J^2(\Sigma,M) \arrow{r}{p^2_1} & J^1(\Sigma,M)\\
		\mathcal{R}_{\alpha} \arrow[rdd, twoheadrightarrow, sloped, "p^{\alpha+1}_{\alpha}"] \arrow{r} \arrow[symbol=\subset]{u} & \mathcal{R}_{\alpha-1} \arrow{r} \arrow[symbol=\subset]{u} & \cdots \arrow{r} & \mathcal{R}_1 \arrow{r} \arrow[twoheadrightarrow]{rdd} \arrow[symbol=\subset]{u} & \mathcal{R}_0 \arrow[symbol=\subset]{u} \\
		\\
		& \bar{\mathcal{R}}_{\alpha-1} \arrow[hookrightarrow]{uu} \arrow[dashed, bend left=40, sloped]{uul}{\text{lift using}}[swap]{ \substack{\text{full rank of $\lambda$} \\ \text{(\autoref{lemma:jetLiftingIsoCont:Lambda})}} } &&& \relHor = \bar{\mathcal{R}}_0 \arrow[uu, hookrightarrow, sloped] \arrow[dashed]{lll}[swap]{\text{lift inductively to $\bar{\mathcal{R}}_{\alpha-1}$}}{\text{using $\Omega$-regularity (\autoref{lemma:jetLiftingIsoCont:OmegaRegularity})} }
	\end{tikzcd}
	\]
	For any $\alpha\ge 1$, we have $p^{\alpha+1}_1 = p^{\alpha}_1 \circ p^{\alpha+1}_\alpha = p^2_1 \circ \cdots \circ p^{\alpha+1}_\alpha$. From \autoref{lemma:jetLiftingIsoCont:Lambda} we have that $p^{\alpha+1}_{\alpha}$ maps $\mathcal{R}_{\alpha}$ surjectively onto $\bar{\mathcal{R}}_{\alpha-1}$. Also, using \autoref{lemma:jetLiftingIsoCont:OmegaRegularity} repeatedly, we have that $p^{\alpha}_1:\bar{\mathcal{R}}_{\alpha-1}\to \relHor$ is a surjection as well. Combining the two, we have the claim.
	
	Since at each step, we have contractible fiber, we see that the fiber of $p^{\alpha+1}_1$ is again contractible. In fact, we are easily able to get lifts of sections over contractible charts as well. This concludes the proof.
\end{proof}

We now prove the above sublemmas.
\begin{proof}[Proof of \autoref{lemma:jetLiftingIsoCont:Lambda}]
	We have the following commutative diagram
	\[\begin{tikzcd}
		\mathcal{R}_{\alpha+1}\arrow[hookrightarrow]{r} & J^{\alpha+2}(\Sigma,M) \arrow{rr}{\Delta_\lambda^{(\alpha+1)}} \arrow{d}[swap]{p^{\alpha+2}_{\alpha+1}} && \Omega^1(\Sigma, \mathbb{R}^p)^{(\alpha+1)} \arrow{d}[swap]{p^{\alpha+1}_\alpha} {\Delta_d^{(\alpha)}} \\
		\bar{\mathcal{R}}_{\alpha} \arrow[hookrightarrow]{r} & J^{\alpha+1}(\Sigma,M) \arrow{rr}[swap]{\Delta_\lambda^{(\alpha)}, \; \Delta_{d\lambda}^{(\alpha)}} && \Omega^1(\Sigma,\mathbb{R}^p)^{(\alpha)} \oplus \Omega^2(\Sigma, \mathbb{R}^p)^{(\alpha)}
	\end{tikzcd} \label{cd:jetLifting:surjectivityOfLambda}\tag{$*$} \]
	Since we have $\mathcal{R}_{\alpha+1}\subset \ker \Delta_\lambda^{(\alpha+1)}$, we get $$p^{\alpha+2}_{\alpha+1}(\mathcal{R}_{\alpha+1}) \subset \ker \Delta_\lambda^{(\alpha)} \cap \ker\Delta_{d\lambda}^{(\alpha)}.$$
	Also, we have $$\mathcal{R}_{\alpha+1}\subset \big(p^{\alpha+2}_1\big)^{-1}(\mathcal{R}_{d\lambda}) \Rightarrow p^{\alpha+2}_{\alpha+1}\big(\mathcal{R}_{\alpha+1}\big) \subset \big(p^{\alpha+1}_\alpha\big)^{-1}(\mathcal{R}_{d\lambda}).$$
	Hence, we see that $\big(p^{\alpha+2}_{\alpha+1}\big)(\mathcal{R}_{\alpha+1})\subset\bar{\mathcal{R}}_\alpha$.
	
	Conversely, let us assume that we are given a jet $$\big(x,y, P_i :\Sym^i T_x\Sigma \to T_y M, \; i=1, \ldots, \alpha +1 \big) \in \bar{\mathcal{R}}_\alpha|_{(x,y)}.$$
	We wish to find $Q:\Sym^{\alpha+2}T_x\Sigma \to T_y M$ so that $$(x,y, P_i, Q)\in\mathcal{R}_{\alpha+1}|_{(x,y)}.$$
	Recall that $\Delta_\lambda(x,y,F:T_x\Sigma\to T_y M) = \big(x, \lambda|_y \circ F : T_x\Sigma\to \mathbb{R}^p\big)$. Then, we may write $$\Delta_\lambda^{(\alpha+1)}\big(x,y,P_i,Q\big) = \big(x, \lambda\circ F, R_i : \Sym^i T_x\Sigma \to \hom(T_x\Sigma, \mathbb{R}^p),\; i=1,\ldots,\alpha+1\big),$$
	so that $R_{\alpha+1}: \Sym^{\alpha+1} T_x \Sigma \to \hom(T_x\Sigma, \mathbb{R}^p)$ is the \emph{only} symmetric tensor which involves $Q$. In fact, we observe that $R_{\alpha+1}$ is given explicitly as $$R_{\alpha+1}\big(X_1,\ldots,X_{\alpha+1}\big) (Y) = \lambda\circ Q\big(X_1,\ldots,X_{\alpha+1}, Y\big) + \text{terms involving $P_i$.}$$
	Now, from the commutative diagram \hyperref[cd:jetLifting:surjectivityOfLambda]{$(*)$} we have
	\begin{align*}
		\big(x,\lambda\circ F, R_i\; i=1, \ldots, \alpha\big) &= p^{\alpha+1}_\alpha\circ \Delta_\lambda^{(\alpha+1)}(x,y,P_i,Q) \\
		&= \Delta_\lambda^{(\alpha)}\circ p^{\alpha+2}_{\alpha+1}(x,y,P_i,Q) \\
		&= \Delta_\lambda^{(\alpha)}(x,y,P_i) \\
		&= 0.
	\end{align*}
	That is, we have, $R_i = 0$ for $i=1,\ldots,\alpha$. We need to find $Q$ so that $R_{\alpha+1}=0$ as well. We claim that the tensor $$R_{\alpha+1}^\prime : \big(X_1,\ldots,X_{\alpha+1},Y\big)\mapsto R_{\alpha+1}(X_1,\ldots,X_{\alpha+1})(Y),$$
	is symmetric.
	
	Let us write $\Delta_d^{(\alpha)}(x,y,\lambda\circ F,R_i) = \big(x,\omega, S_i :\Sym^i T_x\Sigma\to\hom(\Lambda^2T_x\Sigma,\mathbb{R}^p), i=1,\ldots,\alpha\big)$, where the \emph{pure} $\alpha$-jet $S_\alpha$ is given as $$S_\alpha(X_1,\ldots,X_\alpha)(Y\wedge Z) = R_{\alpha+1}(X_1,\ldots,X_\alpha,Y) (Z) - R_{\alpha+1}(X_1,\ldots,X_\alpha,Z)(Y).$$
	Again, going back to the commutative diagram \hyperref[cd:jetLifting:surjectivityOfLambda]{$(*)$}, we have
	$$\Delta_d^{(\alpha)}\big(x,\lambda\circ F, R_i\big)
	= \Delta_d^{(\alpha)}\circ\Delta_\lambda^{(\alpha+1)}(x,y,P_i,Q)
	= \Delta_{d\lambda}^{(\alpha)}\circ p^{\alpha+2}_{\alpha+1}(x,y,P_i,Q)
	= \Delta_{d\lambda}^{(\alpha)}(x,y,P_i)
	= 0,$$
	and so in particular, $S_\alpha = 0$. But then we readily see that $R_{\alpha+1}^\prime$ is a symmetric tensor.
	
	Let us now fix some basis $\{\partial_1,\ldots,\partial_{k+1}\}$ of $T_x \Sigma$ so that, $T_x\Sigma = \langle \partial_1, \ldots, \partial_{k+1} \rangle$, where $\dim\Sigma = k+1$. Then, we have the standard basis for the symmetric space $\Sym^{\alpha+2}T_x\Sigma$, so that
	$$\Sym^{\alpha+2} T_x\Sigma = \Big\langle \partial_J := \partial_{j_1}\odot \cdots\odot \partial_{j_{\alpha+2}} \;\Big|\; J=(1\le j_1 \le \cdots \le j_{\alpha+2} \le k+1) \Big\rangle.$$
	Then for each tuple $J=(j_1,\ldots,j_{\alpha+2})$, we see that the \emph{only} equation involving $Q(\partial_J)$ is
	$$0 = R_{\alpha+1}(\partial_1,\ldots,\partial_{j_{\alpha+1}})(\partial_{j_{\alpha+2}}) = \lambda \circ Q(\partial_J) + \text{terms with $P_i$.}$$
	This is an affine equation in $Q(\partial_J)\in T_y M$, which admits solution since $\lambda|_y:T_y M \to \mathbb{R}^p$ has full rank. Thus, we have solved $Q$.
	
	This concludes the proof that $p^{\alpha+2}_{\alpha+1}(\mathcal{R}_{\alpha+1}) = \bar{\mathcal{R}}_\alpha$. Since $Q$ is solved from an affine system of equation, it is immediate that the fiber $\big(p^{\alpha+2}_{\alpha+1}\big)^{-1}\big(x,y,P_i\big)$ is affine in nature. In fact, we see that the projection is an affine fiber bundle. Furthermore, since $\lambda=(\lambda^s)$ has full rank at each point, we can get lifts of sections over a fixed contractible chart $O\subset \Sigma$, where we may choose some coordinate vector fields as the basis for $T\Sigma|_O$.
\end{proof}

\begin{proof}[Proof of \autoref{lemma:jetLiftingIsoCont:OmegaRegularity}]
	We have the following commutative diagram,
	\[\begin{tikzcd}
		\bar{\mathcal{R}}_{\alpha+1}\arrow[hookrightarrow]{r}\arrow[dashed]{d} & J^{\alpha+2}(\Sigma,M) \arrow{rr}{\Delta_\lambda^{(\alpha+1)},\;\Delta_{d\lambda}^{(\alpha+1)}} \arrow{d}[swap]{p^{\alpha+2}_{\alpha+1}} && \Omega^1(\Sigma, \mathbb{R}^p)^{(\alpha+1)} \oplus \Omega^2(\Sigma,\mathbb{R}^p)^{(\alpha+1)} \arrow{d}{p^{\alpha+1}_\alpha}[swap] {p^{\alpha+1}_\alpha} \\
		\bar{\mathcal{R}}_{\alpha} \arrow[hookrightarrow]{r} & J^{\alpha+1}(\Sigma,M) \arrow{rr}[swap]{\Delta_\lambda^{(\alpha)}, \; \Delta_{d\lambda}^{(\alpha)}} && \Omega^1(\Sigma,\mathbb{R}^p)^{(\alpha)} \oplus \Omega^2(\Sigma, \mathbb{R}^p)^{(\alpha)}
	\end{tikzcd} \label{cd:jetLifting:omegaRegularity} \tag{$**$} \]
	We have already proved that $p^{\alpha+2}_{\alpha+1}$ maps $\mathcal{R}_{\alpha+1}$ surjectively onto $\bar{\mathcal{R}}_\alpha$; since $\bar{\mathcal{R}}_{\alpha+1}\subset \mathcal{R}_{\alpha+1}$ we have that $p^{\alpha+2}_{\alpha+1}$ maps $\bar{\mathcal{R}}_{\alpha+1}$ into $\bar{\mathcal{R}}_\alpha$. We show the surjectivity.
	
	Suppose $\sigma = \big(x,y, P_i : \Sym^i T_x\Sigma\to T_y M,\;i=1,\ldots,\alpha+1\big)\in\bar{\mathcal{R}}_\alpha|_{(x,y)}$ is a given jet. We need to find out $Q:\Sym^{\alpha+2}T_x\Sigma \to T_y M$ such that, $(x,y,P_i,Q)\in\bar{\mathcal{R}}_{\alpha+1}|_{(x,y)}$. We have seen that in order to find $Q$ so that $(x,y,P_i,Q)\in \mathcal{R}_{\alpha + 1}|_{(x,y)}$, we must solve the affine system $$\lambda\circ Q = \text{terms with $P_i$,}$$
	which is indeed solvable since $\lambda$ has full rank. Now in order to find $(x,y,P_i,Q)\in \bar{\mathcal{R}}_{\alpha+1} = \bar{\mathcal{R}}_\alpha \cap \ker\Delta_{d\lambda}^{(\alpha+1)}$, we need to figure out the equations involved in $\Delta_{d\lambda}^{(\alpha+1)}$. Let us write $$\Delta_{d\lambda}^{(\alpha+1)}(x,y,P_i,Q) = \big(x, P_1^*d\lambda, R_i : \Sym^i T_x\Sigma \to \hom(\Lambda^2 T_x\Sigma, \mathbb{R}^p), i=1,\ldots,\alpha+1\big).$$
	Then the \emph{pure} $\alpha+1$-jet $R_{\alpha+1}:\Sym^{\alpha+1}T_x\Sigma\to \hom(\Lambda^2 T_x\Sigma, \mathbb{R}^p)$ is the only expression that involves $Q$. In fact, we have that $R_{\alpha+1}$ is given as,
	\begin{align*}
		R_{\alpha+1}(X_1,\ldots,X_{\alpha+1})(Y\wedge Z) &= d\lambda\big(Q(X_1,\ldots,X_{\alpha+1},Y), P_1(Z)\big)\\
		&\qquad\qquad + d\lambda\big(P_1(Y), Q(X_1,\ldots,X_{\alpha+1},Z)\big)\\
		&\qquad\qquad + \text{terms involving $P_i$ with $i\ge 2$.}
	\end{align*}
	Now, looking at commutative diagram \hyperref[cd:jetLifting:omegaRegularity]{$(**)$}, we have
	\begin{align*}
		(x,y, P_1^*d\lambda, R_i, i=1,\ldots,\alpha) &= p^{(\alpha+1)}_{\alpha}\circ \Delta_{d\lambda}^{(\alpha+1)}(x,y,P_i,Q) \\
		&= \Delta_{d\lambda}^{(\alpha)} \circ p^{\alpha+2}_{\alpha+1}(x,y,P_i,Q) \\
		&= \Delta_{d\lambda}^{(\alpha)}(x,y,P_i) \\
		&= 0.
	\end{align*}
	That is, we have $R_i=0$ for $i=1,\ldots,\alpha$. In order to find $Q$ such that $R_{\alpha+1}=0$, let us fix some basis $\{\partial_1,\ldots,\partial_{k+1}\}$ of $T_x\Sigma$, where $\dim\Sigma = k+1$. Then we have the standard basis for the symmetric space $\Sym^{\alpha+2} T_x\Sigma$, so that, $$\Sym^{\alpha+2} T_x\Sigma := Span \Big\langle \partial_J = \partial_{j_1}\odot \cdots\odot \partial_{j_{\alpha+2}} \;\Big|\; J=(1\le j_1 \le \cdots \le j_{\alpha+2} \le k+1)\Big\rangle.$$
	Now for any tuple $J$ and for any $1\le a < b \le k+1$, we have the equation involving the tensor $Q$ given as,
	\[0 = R_{\alpha+1}(\partial_J) (\partial_a\wedge\partial_b) = d\lambda\big(Q(\partial_{J+a}), P_1(\partial_b\big)\big) + d\lambda\big(P_1(\partial_a), Q(\partial_{J+b}\big) + \text{terms with $P_i$ for $i \ge 2$,}\]
	where $J+a$ is the tuple obtained by ordering $(j_1,\ldots,j_{\alpha+2},a)$. Now observe that $$a < b \Rightarrow J+a \prec J+b,$$
	where $\prec$ is the lexicographic ordering on the set of all ordered $\alpha+2$ tuples. We then treat the above equation as $$\Big(\iota_{P_1(\partial_a)} d\lambda \Big) \circ Q(\partial_{J+b}) = \Big(\iota_{P_1(\partial_b)} d\lambda\Big) \circ Q(\partial_{J+a}) + \text{terms with $P_1$.}$$
	
	Thus, we have identified the defining system of equations for the tensor $Q$ given as follows:
	\[\left\{
	\begin{aligned}
		\lambda\circ Q(\partial_I) &= \text{terms with $P_i$}, \quad \text{for each $\alpha+2$ tuple $I$}\\
		\iota_{P_1(\partial_a)} d\lambda \circ Q(\partial_{J+b}) &= \iota_{P_1(\partial_b)} d\lambda \circ Q(\partial_{J+a}) + \text{terms with $P_i$},\\
		&\qquad\qquad \text{for each $\alpha+1$-tuple $J$ and $1\le a < b\le k+1$}
	\end{aligned}\tag{$\dagger$} \label{eqn:omegaRegularSystem}
	\right.\]
	We claim that this system can be solved for each $Q(\partial_I) \in T_y M$ in a \emph{triangular} fashion, using the ordering $\prec$ on the tuples. Indeed, first observe that for the $\alpha+2$-tuple $\hat I = (1,\ldots,1)$, which is the \emph{least} element in the order $\prec$, the only subsystem involving $Q(\partial_{\hat I})$ in the system (\ref{eqn:omegaRegularSystem}) is $$\lambda \circ Q(\partial_{\hat I}) = \text{terms with $P_i$,}$$
	which is solvable for $Q(\partial_{\hat I})$ as $\lambda$ has full rank. Next, for some $\alpha+2$-tuple $I$ with $\hat I \preceq I$, inductively assume that $Q(\partial_{I^\prime})$ is solved from (\ref{eqn:omegaRegularSystem}) for each $\alpha+2$-tuple $I^\prime \prec I$. Then, the subsystem involving $Q(\partial_I)$ in (\ref{eqn:omegaRegularSystem}) is given as
	\[\left\{
		\begin{aligned}
			\lambda\circ Q(\partial_I) &= \text{terms with $P_i$}, \quad \text{for each $\alpha+2$ tuple $I$}\\
			\iota_{P_1(\partial_a)} d\lambda \circ Q(\partial_I) &= \text{terms with $P_i$ and $Q(\partial_{I^\prime})$ with $I^\prime \prec I$},\\
			&\qquad\qquad \text{for $1 \le a < b \le k+1$, with $b\in I$.}
		\end{aligned}\tag{$\dagger_I$} \label{eqn:omegaRegularSystem:I}
	\right.\]
	From the induction hypothesis, the right-hand side of this affine system consists of known terms. Now, it follows from the $\Omega$-regularity condition that for any collection of independent vectors $\{v_1,\ldots,v_r\}$ in $T_x\Sigma$, the collection of $1$-forms $$\big\{\iota_{P_1(v_i)} d\lambda^s|_{\mathcal{D}_y}, \quad 1\le i \le r,\; 1\le s\le p\big\}$$
	are independent. As $\mathcal{D}$ is given as the common kernel of $\lambda^1,\ldots,\lambda^p$, we see that this is equivalent to the following non-vanishing condition: 
	$$\Big(\bigwedge_{s=1}^p \lambda^s\Big)\wedge \bigwedge_{i=1}^r\Big(\iota_{P_1(v_1)}d\lambda^1\wedge\ldots\wedge\iota_{P_1 (v_i)}d\lambda^s\Big) \ne 0.$$
	But then clearly, the subsystem (\ref{eqn:omegaRegularSystem:I}) is a \emph{full rank} affine system, allowing us to solve for $Q(\partial_I)$. Proceeding in this triangular fashion, we solve the tensor $Q$ from (\ref{eqn:omegaRegularSystem}). Clearly, the solution space for $Q$ is contractible since at each stage we have solved an affine system.
	
	We have thus proved that $p^{\alpha+2}_{\alpha+1}:\bar{\mathcal{R}}_{\alpha+1}|_{(x,y)}\to\bar{\mathcal{R}}_{\alpha}|_{(x,y)}$ is indeed surjective, with contractible fiber. In fact, the algorithmic nature of the solution shows that, if $O\subset\Sigma$ is a contractible chart, then we can obtain the lift of any section of $\bar{\mathcal{R}}_\alpha|_O$ to $\bar{\mathcal{R}}_{\alpha+1}$, along $p^{\alpha+2}_{\alpha+1}$. This concludes the proof.
\end{proof}

\begin{remark}\label{rmk:jetLiftingIsoContRegularityLowerDim}
	In the above proof of \autoref{lemma:jetLiftingIsoCont:OmegaRegularity}, the full strength of $\Omega$-regularity of $F$ has not been utilized. Note that, with our \emph{choice} of the ordered basis of $T_x\Sigma$, the vector $P_1(\partial_{k+1})$ does not appear in the left-hand side of the above triangular system (\ref{eqn:omegaRegularSystem}). In fact, we can prove \autoref{lemma:jetLiftingIsoCont} under the milder assumption that $\im F$ contains a codimension one $\Omega$-regular subspace, which in our case is the subspace $\langle F(\partial_1),\ldots,F(\partial_k)\rangle \subset T_x\Sigma$. This observation was used in \cite{bhowmickFat46} to prove the existence of germs of horizontal $2$-submanifolds in a certain class of fat distribution of type $(4,6)$.
\end{remark}

\section{Fat Distributions of corank-$2$ and their Degree}\label{sec:fatAndDegree}
In this section, we recall the preliminaries of the fatness of distribution and then introduce a notion called `degree' on the class of corank-$2$ fat distributions.

\subsection{Fat Tuple}
Let us first study fatness from an algebraic viewpoint. Consider a tuple $(D,\omega^1,\omega^2)$, where $D$ is a vector space equipped with a pair of linear symplectic forms $\omega^1,\omega^2$. We shall denote the pair $(\omega^1,\omega^2)$ by $\Omega$. Next, define an isomorphism $A:D\to D$ by
\begin{equation}\label{eqn:connectingAutomorphism}
	\omega^1(u,Av) = \omega^2(u,v), \quad \text{for $u,v\in D$.}
\end{equation}
We shall refer to $A$ as the \emph{connecting automorphism} between $\omega^1$ and $\omega^2$. For any subspace $V\subset D$ denote, $$V^{\perp_i} = \big\{w\in D \; \big|\; \omega_i(v,w) = 0,\forall v\in V\big\},\quad i=1,2, \qquad V^\Omega = V^{\perp_1}\cap V^{\perp_2}.$$

\begin{obs}\label{obs:fatTupleBasic}
	For any subspace $V\subset D$ we have the following.
	\begin{enumerate}
		\item \label{obs:fatTupleBasic:1} $V^{\perp_2} = \big(AV\big)^{\perp_1}, \quad V^{\perp_1} = A\big(V^{\perp_2}\big)$.
		\item \label{obs:fatTupleBasic:2} $V^\Omega = (V+AV)^{\perp_1} = (V+A^{-1}V)^{\perp_2}$.
		\item \label{obs:fatTupleBasic:3} The subspace $V^\Omega$ only depends on the linear span of the $2$-forms $\omega^1,\omega^2$.
	\end{enumerate}
\end{obs}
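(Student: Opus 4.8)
The plan is to reduce everything to a single ``transfer identity'' linking $\omega^1$, $\omega^2$ and $A$, together with the elementary fact that, for a fixed bilinear form $\beta$ on $D$ and subspaces $W_1,W_2\subset D$, one has $(W_1+W_2)^{\perp_\beta}=W_1^{\perp_\beta}\cap W_2^{\perp_\beta}$. First I would record the identities obtained from the defining relation $\omega^1(u,Av)=\omega^2(u,v)$: substituting $v\mapsto A^{-1}v$ (legitimate since $A$ is an isomorphism) gives $\omega^1(u,w)=\omega^2(u,A^{-1}w)$ for all $u,w$, while combining the defining relation with skew-symmetry of both forms gives $\omega^1(Av,w)=\omega^2(v,w)$ and $\omega^2(A^{-1}v,w)=\omega^1(v,w)$ for all $v,w$. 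These three identities are the only input needed.

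For part (1): a vector $w$ lies in $V^{\perp_2}$ iff $\omega^2(v,w)=0$ for all $v\in V$, and by $\omega^1(Av,w)=\omega^2(v,w)$ this holds iff $\omega^1(Av,w)=0$ for all $v\in V$, i.e. iff $w\in(AV)^{\perp_1}$; hence $V^{\perp_2}=(AV)^{\perp_1}$. For the second equality, $w\in A(V^{\perp_2})$ iff $A^{-1}w\in V^{\perp_2}$ iff $\omega^2(v,A^{-1}w)=0$ for all $v\in V$, and since $\omega^2(v,A^{-1}w)=\omega^1(v,w)$ this is equivalent to $w\in V^{\perp_1}$; hence $V^{\perp_1}=A(V^{\perp_2})$. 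I would also note that applying the first equality with $V$ replaced by $A^{-1}V$ yields $(A^{-1}V)^{\perp_2}=V^{\perp_1}$, which is convenient for the next part.

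For part (2): using $V^{\perp_2}=(AV)^{\perp_1}$ from part (1) together with the sum-to-intersection rule for $\perp_1$,
\[
V^\Omega=V^{\perp_1}\cap V^{\perp_2}=V^{\perp_1}\cap(AV)^{\perp_1}=(V+AV)^{\perp_1}.
\]
Symmetrically, from $(A^{-1}V)^{\perp_2}=V^{\perp_1}$ we get $V^\Omega=(A^{-1}V)^{\perp_2}\cap V^{\perp_2}=(V+A^{-1}V)^{\perp_2}$. Finally, for part (3), if $(\mu^1,\mu^2)$ spans the same $2$-plane in $\Lambda^2 D^*$ as $(\omega^1,\omega^2)$, then each $\mu^i$ is a linear combination of $\omega^1,\omega^2$ and, by invertibility of the change-of-basis matrix, each $\omega^i$ is a linear combination of $\mu^1,\mu^2$; hence a vector $w$ annihilates $V$ with respect to both $\mu^1,\mu^2$ iff it annihilates $V$ with respect to both $\omega^1,\omega^2$, which is precisely the assertion that $V^\Omega$ is unchanged. (This last point does not even use nondegeneracy of the $\mu^i$, only that they span the same plane.)

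There is essentially no substantive obstacle: the whole statement is elementary linear algebra. The only thing requiring care is bookkeeping — keeping straight which annihilator ($\perp_1$ versus $\perp_2$) is in play and whether $A$ or $A^{-1}$ appears — and invoking skew-symmetry of the two forms at the right moments when deriving the transfer identities; once those identities are in hand, each of the three parts is a one-line verification.
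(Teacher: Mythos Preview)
Your proof is correct. The paper states this as an observation without proof, leaving it as a routine verification; your argument supplies exactly the natural one-line checks from the defining relation $\omega^1(u,Av)=\omega^2(u,v)$ together with skew-symmetry, which is precisely what the authors intend the reader to do.
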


\begin{defn}
	A subspace $V\subset D$ is called $\Omega$-\emph{isotropic} if $V\subset V^\Omega$.
\end{defn}
	
\begin{defn}
	A subspace $V\subset D$ is called $\Omega$-\emph{regular} if the linear map,
	\begin{equation}\label{eqn:omegaRegularLinear}
		\begin{aligned}
			D &\to \hom(V,\mathbb{R}^2)\\
			\xi &\mapsto \big(\iota_\xi \omega^1|_V, \iota_\xi \omega^2|_V\big)
		\end{aligned}
	\end{equation}
	is surjective (compare \autoref{defn:contOmegaRegular}).
\end{defn}

We have the following characterization of regularity.
\begin{prop}\label{prop:fatTupleOmegaRegular}
	For a subspace $V\subset D$, the following statements are equivalent.
	\begin{enumerate}
		\item \label{prop:fatTupleOmegaRegular:1} $V$ is $\Omega$-regular.
		\item \label{prop:fatTupleOmegaRegular:2} $V\cap AV = \{0\}$, i.e, $V+AV$ is a direct sum.
		\item \label{prop:fatTupleOmegaRegular:3} $\codim V^\Omega = 2\dim V$.
	\end{enumerate}
\end{prop}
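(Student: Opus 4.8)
The plan is to route everything through a single kernel computation for the map defining $\Omega$-regularity together with the dimension bookkeeping furnished by \autoref{obs:fatTupleBasic}. Write $\Phi\colon D\to\hom(V,\bbR^2)$ for the map $\xi\mapsto\big(\iota_\xi\omega^1|_V,\iota_\xi\omega^2|_V\big)$ whose surjectivity is condition (1). First I would identify $\ker\Phi$: a vector $\xi$ lies in $\ker\Phi$ exactly when $\omega^1(\xi,v)=\omega^2(\xi,v)=0$ for all $v\in V$, and since $\omega^1,\omega^2$ are skew-symmetric this is the same as $\xi\in V^{\perp_1}\cap V^{\perp_2}=V^\Omega$. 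Hence $\dim\im\Phi=\dim D-\dim V^\Omega=\codim V^\Omega$. As $\dim\hom(V,\bbR^2)=2\dim V$, the map $\Phi$ is surjective precisely when $\codim V^\Omega=2\dim V$. This already gives the equivalence (1)$\Leftrightarrow$(3) with no further work.

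For (2)$\Leftrightarrow$(3) I would invoke \autoref{obs:fatTupleBasic}, namely the identity $V^\Omega=(V+AV)^{\perp_1}$. Because $\omega^1$ is nondegenerate, $w\mapsto\iota_w\omega^1$ identifies $D$ with $D^*$ and carries $W^{\perp_1}$ onto the annihilator of $W$ for every subspace $W\subset D$; consequently $\codim W^{\perp_1}=\dim W$. Applying this with $W=V+AV$ yields $\codim V^\Omega=\dim(V+AV)$. Since $A$ is an isomorphism, $\dim AV=\dim V$, so $\dim(V+AV)=2\dim V-\dim(V\cap AV)$. Combining, $\codim V^\Omega=2\dim V-\dim(V\cap AV)$, whence $\codim V^\Omega=2\dim V$ if and only if $V\cap AV=\{0\}$, i.e. the sum $V+AV$ is direct. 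This is (2)$\Leftrightarrow$(3), and the cycle of equivalences follows.

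There is no genuine obstacle here; the only points that need a moment of care are (i) using skew-symmetry so that the two defining conditions for $\ker\Phi$ coincide with the perp-spaces appearing in the definition of $V^\Omega$, and (ii) citing the correct item of \autoref{obs:fatTupleBasic} to rewrite $V^\Omega$ purely in terms of $\omega^1$, so that nondegeneracy of a single symplectic form can be applied. With those in hand the argument is just rank–nullity plus the observation that $A$ preserves dimension.
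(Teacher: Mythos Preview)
Your proof is correct and follows essentially the same approach as the paper: rank--nullity for the defining map $\Phi$ gives (1)$\Leftrightarrow$(3), and the identity $V^\Omega=(V+AV)^{\perp_1}$ from \autoref{obs:fatTupleBasic} together with nondegeneracy of $\omega^1$ gives (2)$\Leftrightarrow$(3). The paper merely asserts ``from the definition, it is clear'' for (1)$\Leftrightarrow$(3), whereas you spell out the kernel identification and the role of skew-symmetry, but the underlying argument is identical.
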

\begin{proof}
	Since $V^\Omega$ is the kernel of the map in \autoref{eqn:omegaRegularLinear}, it is clear from the definition that $V$ is $\Omega$-regular if and only if $\codim V^\Omega = 2\dim V$. This proves (\ref{prop:fatTupleOmegaRegular:1}) $\Leftrightarrow$ (\ref{prop:fatTupleOmegaRegular:3}). To prove (\ref{prop:fatTupleOmegaRegular:2}) $\Leftrightarrow$ (\ref{prop:fatTupleOmegaRegular:3}), note that $$\codim V^\Omega = \codim (V+AV)^{\perp_1} = \dim (V+AV), \quad\text{}$$ as $\omega^1$ is nondegenerate. Hence, $\codim V^\Omega = 2\dim V$ if and only if $V+AV$ is a direct sum.
\end{proof}

It is clear from the above proposition that $\Omega$-regularity of a subspace only depends on the span of the $2$-forms $\omega^1,\omega^2$.

\begin{defn}
	A tuple $(D,\omega^1,\omega^2)$ is called \emph{fat} if every one dimensional subspace of $D$ is $\Omega=(\omega^1,\omega^2)$-regular. 
\end{defn}

\begin{prop}\label{prop:fatTupleNoRealEigenvalue}
	$(D, \omega^1, \omega^2)$ is a fat tuple if and only if the connecting automorphism $A : D \to D$ has no real eigenvalue.
\end{prop}
\begin{proof}
	It follows from \autoref{prop:fatTupleOmegaRegular} (\ref{prop:fatTupleOmegaRegular:2}), that the tuple $(D, \omega^1,\omega^2)$ is fat if and only if for every $0 \ne \tau \in D$, the vectors $\{ \tau, A\tau \}$ are linearly independent. Equivalently, the tuple is fat if and only if every $0 \ne \tau \in D$ is not an eigenvector of $A$, i.e, $A$ has no real eigenvalue.
\end{proof}

\begin{defn}
	A fat tuple $(D,\omega^1,\omega^2)$ is said to have \emph{degree $d$} if the minimal polynomial of $A$ has degree $d$.
\end{defn}

\begin{prop}\label{prop:fatTupleVAV}
	Let $(D,\omega^1,\omega^2)$ be a degree $2$ fat tuple. Then, for any subspace $V$ of $D$,
	\begin{enumerate}
		\item \label{prop:fatTupleVAV:1}  $V+AV=V+A^{-1}V$.
		\item \label{prop:fatTupleVAV:2} $V^\Omega = (V+AV)^{\perp_1} = (V+AV)^{\perp_2} = (V+AV)^\Omega$.
		\item \label{prop:fatTupleVAV:3} $(V^\Omega)^\Omega = V+AV$.
		\item \label{prop:fatTupleVAV:4} If $V$ is $\Omega$-isotropic then $(V^\Omega)^\Omega$ is $\Omega$-isotropic. 
	\end{enumerate}
\end{prop}
\begin{proof}
	Since the minimal polynomial of $A$ is of degree 2, it follows that $A^{-1}=\lambda I-\mu A$ for some real numbers $\lambda, \mu$, with $\mu \ne 0$. Hence, $V+AV=V+A^{-1}V$ for any subspace $V$ of $D$, proving (\ref{prop:fatTupleVAV:1}). The proof of (\ref{prop:fatTupleVAV:2}) now follows directly from \autoref{obs:fatTupleBasic} (\ref{obs:fatTupleBasic:2}). Furthermore, (\ref{prop:fatTupleVAV:2}) implies (\ref{prop:fatTupleVAV:3}):
	$$(V^\Omega)^\Omega = (V^\Omega)^{\perp_1}\cap (V^\Omega)^{\perp_2} = V+AV.$$
	To prove (\ref{prop:fatTupleVAV:4}), let $V$ be $\Omega$-isotropic, i.e, $V\subset V^\Omega$, which implies $({V^\Omega})^\Omega \subset V^\Omega$. On the other hand, by (\ref{prop:fatTupleVAV:2}) and (\ref{prop:fatTupleVAV:3}) we have 
	$V^\Omega=(V+AV)^\Omega$ and $V+AV=(V^\Omega)^\Omega$. This proves that $(V^\Omega)^\Omega$ is $\Omega$-isotropic.
\end{proof}

\begin{remark}\label{rmk:horizontalNecessaryDimension}
	If $V \subset D$ is $\Omega$-isotropic, it follows from \autoref{prop:fatTupleVAV} that $V+AV$ is $\Omega$-isotropic and hence, $V + AV \subset V^\Omega$. If $V$ is furthermore $\Omega$-regular, then from \autoref{prop:fatTupleOmegaRegular} we have $\codim V^\Omega = 2\dim V$ and $\dim V^\Omega \ge \dim(V+AV) = 2\dim V$. Thus, if $V$ is $\Omega$-regular as well as $\Omega$-isotropic, then $\dim V \le \frac{1}{4} \dim D$.
\end{remark}

The following results will be useful later in \autoref{sec:application} when we shall discuss $h$-principle results for $K$-contact immersions in degree $2$ fat distributions.
\begin{prop}\label{prop:fatTupleExtension}
	Let $(D,\omega^1,\omega^2)$ be a degree $2$ fat tuple. Then, for any $(\omega^1,\omega^2)$-regular subspace $V\subset D$ and for any $\tau \not \in (V^\Omega)^\Omega$, the subspace $V^\prime = V+\langle \tau\rangle$ is again $(\omega^1,\omega^2)$-regular.
\end{prop}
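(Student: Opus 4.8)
The plan is to reduce everything to the dimension-counting characterization of regularity in \autoref{prop:fatTupleOmegaRegular}: a subspace $W\subset D$ is $\Omega$-regular iff $W\cap AW=\{0\}$, equivalently $\dim(W+AW)=2\dim W$. By \autoref{prop:fatTupleVAV}\,(\ref{prop:fatTupleVAV:3}) the hypothesis $\tau\notin(V^\Omega)^\Omega$ means precisely $\tau\notin V+AV$; in particular $\tau\notin V$, so $\dim V'=\dim V+1$. Since $V$ is $\Omega$-regular we have $\dim(V+AV)=2\dim V$, and it suffices to show that $\dim\big(V+AV+\langle\tau,A\tau\rangle\big)=2\dim V+2$, i.e. that adjoining $\tau$ and then $A\tau$ raises the dimension by two. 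The first of these is immediate from $\tau\notin V+AV$; the content is that $A\tau\notin V+AV+\langle\tau\rangle$.

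\textbf{Step 1: $V+AV$ is $A$-invariant.} Because $(D,\omega^1,\omega^2)$ has degree $2$, the minimal polynomial of the connecting automorphism $A$ has the form $x^2+px+q$ with $p,q\in\mathbb R$, so $A^2=-pA-qI$ and, by fatness, $x^2+px+q$ has no real root (see \autoref{prop:fatEigenValue}). Then $A^2V\subset AV+V$, hence $A(V+AV)=AV+A^2V\subset V+AV$; as $A$ restricts to an isomorphism of the finite-dimensional space $V+AV$, in fact $A(V+AV)=V+AV$.

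\textbf{Step 2 (main step): $A\tau\notin V+AV+\langle\tau\rangle$.} Suppose to the contrary that $A\tau=w+c\tau$ with $w\in V+AV$ and $c\in\mathbb R$. Applying $A$ and using $A^2\tau=-pA\tau-q\tau$, we get on one hand $A^2\tau=Aw+cA\tau=Aw+cw+c^2\tau$, and on the other hand $A^2\tau=-p(w+c\tau)-q\tau$. Subtracting and using Step 1 (so $Aw\in V+AV$), $$\big(c^2+pc+q\big)\tau=-(p+c)w-Aw\in V+AV.$$ Since $\tau\notin V+AV$, this forces $c^2+pc+q=0$, contradicting that $x^2+px+q$ has no real root. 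Hence $A\tau\notin V+AV+\langle\tau\rangle$.

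\textbf{Step 3: conclusion.} Combining, $\dim(V'+AV')=\dim\big(V+AV+\langle\tau,A\tau\rangle\big)=2\dim V+2=2\dim V'$, so $V'+AV'$ is a direct sum and $V'$ is $\Omega$-regular by \autoref{prop:fatTupleOmegaRegular}. The only nontrivial point is Step 2, where the degree-$2$ hypothesis is essential: it is exactly what makes $V+AV$ invariant under $A$ and, together with the absence of real eigenvalues, prevents $A\tau$ from being absorbed into the span of $V+AV$ and $\tau$. Everything else is routine bookkeeping via the regularity criterion.
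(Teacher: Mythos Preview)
Your proof is correct and follows essentially the same approach as the paper: both arguments use the degree-$2$ hypothesis to get $A$-invariance of $V+AV$ and then the absence of real eigenvalues to rule out the remaining case. The paper packages your Step~2 slightly more abstractly---observing that $(V+AV)\cap\langle\tau,A\tau\rangle$ is $A$-invariant of dimension $<2$, hence zero---whereas you unpack this by directly showing that $A\tau\in V+AV+\langle\tau\rangle$ would force $c$ to be a real root of the minimal polynomial; these are the same idea.
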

\begin{proof}
	Let $V$ be $\Omega$-regular and $\tau \not \in (V^\Omega)^\Omega = V+AV$. Clearly, $\dim (V+AV) \cap \langle \tau, A\tau \rangle < 2$. Since the minimal polynomial of $A$ has degree $2$, both the subspaces $V+AV$ and $\langle \tau, A\tau\rangle$ are invariant under $A$. Consequently, their intersection is also invariant under $A$. Since $A$ has no real eigenvalue, this intersection \emph{cannot} be $1$-dimensional. Thus, $(V+AV)\cap  \langle \tau, A\tau \rangle = 0$; equivalently, $(V + \langle\tau\rangle) \cap (AV + \langle A \tau\rangle) = 0$. The proof then follows from \autoref{prop:fatTupleOmegaRegular} (\ref{prop:fatTupleOmegaRegular:2}).
\end{proof}

\begin{prop}\label{prop:fatTupleIsoContact}
	Let $(D,\omega^1,\omega^2)$ be a fat tuple. Suppose $V\subset D$ is symplectic with respect to $\omega^1$ and isotropic with respect to $\omega^2$. Then,
	\begin{enumerate}
		\item \label{prop:fatTupleIsoContact:1} $V$ is $\Omega$-regular.
	\end{enumerate} 
	If $(D,\omega^1,\omega^2)$ is of degree $2$, then
	\begin{enumerate}
		\item[(2)] \label{prop:fatTupleIsoContact:2}  $D = V^\Omega \oplus (V^\Omega)^\Omega$.
		\item[(3)] \label{prop:fatTupleIsoContact:3} $V^\Omega$ and $(V^\Omega)^\Omega$ are symplectic with respect to both $\omega^1,\omega^2$.
	\end{enumerate} 
\end{prop}
\begin{proof}
	To prove (\ref{prop:fatTupleIsoContact:1}), we need to show that $V\cap AV=0$, where $A$ is the automorphism defined by $\omega^1(u,Av) = \omega^2(u,v)$. Let $z \in V\cap AV$. Then there exists a $v\in V$ such that $z = Av$. Now, for any $u \in V$ we have $\omega^1(u,z) = \omega^1(u,Av) = \omega^2(u,v) = 0$, as $V$ is $\omega^2$-isotropic. Since $V$ is $\omega^1$-symplectic, we conclude that $z = 0$. Hence, $V\cap AV = 0$ and thus, $V$ is $(\omega^1,\omega^2)$-regular.

	Since $V$ is $\omega^2$-isotropic, $V$ and $AV$ are $\omega^1$-orthogonal, that is $\omega^1(V, AV)=0$ (\autoref{eqn:connectingAutomorphism}). As $(D,\omega^1,\omega^2)$ has degree $2$, we have real numbers $\lambda,\mu$ with $\mu \ne 0$ so that $A^2 = \lambda A + \mu I$. Then, for $u, v \in V$ we have
	\[\omega^1(Au, Av) = \omega^2(Au,v) = \omega^1(A^2 u, v) = \lambda \omega^1(Au,v) + \mu \omega^1(u,v) = \mu \omega^1(u,v).\]
	Hence, $V$ is $\omega^1$-symplectic implies that $AV$ is also an $\omega^1$-symplectic subspace. The above two observations imply that $V+AV$ is $\omega_1$-symplectic so that $(V+AV) \cap (V+AV)^{\perp_1} = 0$. Since $(V+AV)^{\perp_1} = (V+AV)^{\perp_2}$ by \autoref{prop:fatTupleVAV}(\ref{prop:fatTupleVAV:2}), it follows that $(V+AV)$ is $\omega^2$-symplectic as well. This proves (\hyperref[prop:fatTupleIsoContact:3]{2}) and (\hyperref[prop:fatTupleIsoContact:3]{3}) by recalling the \autoref{prop:fatTupleVAV} again.
\end{proof}

\subsection{Fat Distribution}

\begin{defn}
	A distribution $\mathcal{D}\subset TM$ is called \emph{fat} (or \emph{strongly bracket generating}) at $x\in M$ if for every nonzero local section $X$ of $\mathcal{D}$ near $x$, the set \[\big\{[X,Y]_x \; \big|\; \text{$Y$ is a local section near $x$}\big\}\] equals $T_x M$. The distribution is fat if it is fat at every point $x\in M$.
\end{defn}

In \cite{gromovCCMetric}, Gromov defines this as $1$-fatness. An important consequence of fatness is that for \emph{every} non-vanishing $\alpha$ annihilating $\mathcal{D}$, the $2$-form $d\alpha|_\mathcal{D}$ is nondegenerate. There are many equivalent ways to describe fat distributions. 
\begin{prop}\cite{montTour}\label{prop:fatDistDefinition}
	The following are equivalent.
	\begin{itemize}
		\item $\mathcal{D}$ is fat at $x \in M$.
		\item $\omega(\alpha)$ is a nondegenerate $2$-form on $\mathcal{D}_x$ for every $\alpha$ in the annihilator bundle $\Ann(\mathcal{D})$, where $\omega : \Ann(\mathcal{D}) \to \Lambda^2\mathcal{D}^*$ is the dual curvature map.
		\item Every $1$-dimensional subspace of $\mathcal{D}_x$ is $\Omega$-regular.
	\end{itemize}
\end{prop}

Fat distributions are interesting in themselves, and they have been studied in generality \cite{geHorizontalCC, raynerFat}. Fatness puts strict numerical constraints on the rank and corank of the distribution.
\begin{theorem}\cite{raynerFat, montTour}
	Suppose $\mathcal{D}$ is a rank $k$ distribution on $M$ with $\dim M=n$. If $\mathcal{D}$ is fat then the following numerical constraints hold,
	\begin{itemize}
		\item $k$ is divisible by $2$; and if $k < n-1$ then $k$ is divisible by $4$
		\item $k\ge (n-k)+1$
		\item The sphere $S^{k-1}$ admits $n-k$-many linearly independent vector fields
	\end{itemize}
	Conversely, given any pair $(k,n)$ satisfying the above, there is a fat distribution germ of type $(k,n)$.
\end{theorem}

When $\cork\mathcal{D}=1$, a fat distribution must be of the type $(2n,2n+1)$. In fact, corank-$1$ fat distributions are exactly the contact ones and hence are generic. In general, fatness is not a generic property \cite{zanetGenericRank4,montTour}. We now describe two important classes of fat distributions in corank-$2$ and $3$. These are holomorphic and quaternionic counterparts of contact structures.

\begin{example}\label{exmp:holomorphicContact}
	A \emph{holomorphic contact structure} on a complex manifold $M$ with $\dim_\mathbb{C} M = 2n + 1$ is a corank-$1$ holomorphic subbundle of the holomorphic tangent bundle $T^{(1,0)}M$, which is locally given as the kernel of a holomorphic $1$-form $\Theta$ satisfying $\Theta \wedge d\Theta^n \ne 0$. By the holomorphic contact Darboux theorem \cite{forstnericHoloLegendrianCurves}, $\Theta$ can be locally expressed as $\Theta = dz - \sum_{j=1}^n y_j dx_j$, where $(z,x_1,\ldots,x_n,y_1,\ldots,y_n)$ is a holomorphic coordinate. 
	Writing $z=z_1 + i z_2, x_j = x_{j1} + i x_{j2}, y_j = y_{j1} + i  y_{j2}$, we get $\Theta = \lambda^1 + i \lambda^2$, where 
	$$\lambda^1 = dz_1 - \sum_{j=1}^n \big(y_{j1} dx_{j1} - y_{j2}dx_{j2}\big),\quad 
	\lambda^2 = dz_2 - \sum_{j=1}^n \big(y_{j2} dx_{j1} + y_{j1}dx_{j2}\big).$$
	The distribution $\mathcal{D} \underset{loc.}{=} \ker \lambda^1 \cap \ker\lambda^2$ is a corank-$2$ fat distribution. We can explicitly define a frame $\{X_{j1},X_{j2},Y_{j1},Y_{j2}\}$ for $\mathcal{D}$ by
	$$X_{j1} = \partial_{x_{j1}} + y_{j1}\partial_{z_1} + y_{j2}\partial_{z_2}, \quad X_{j2} = \partial_{x_{j2}} - y_{j2}\partial_{z_1} + y_{j1}\partial_{z_2}, \quad Y_{j1} = \partial_{y_{j1}},\quad Y_{j2} = \partial_{y_{j2}}.$$
	They generate a finite dimensional Lie algebra, known as the complex Heisenberg algebra.
\end{example}

\begin{example} \label{exmp:quaternionicContact}
	A \emph{quaternionic contact structure}, as introduced by Biquard in \cite{biquardQuaternionContact}, on a manifold $M$ of dimension $4n+3$ is a corank-$3$ distribution $\mathcal{D}\subset TM$, given locally as the common kernel of $1$-forms $(\lambda^1,\lambda^2,\lambda^3)\in \Omega^1(M,\mathbb{R}^3)$ such that there exists a Riemannian metric $g$ on $\mathcal{D}$ and a Quaternionic structure $(J_i, i=1,2,3)$ on $\mathcal{D}$ satisfying, $d\lambda^i|_\mathcal{D}= g(J_i\_,\_)$. By a Quaternionic structure, we mean that  $J_i$ are (local) endomorphisms of $\mathcal{D}$ which satisfy the quaternionic relations: $J_1^2 = J_2^2 = J_3^2 = -1 = J_1J_2J_3$. Equivalently, there exists an $S^2$-bundle $Q\to M$ of triples of almost complex structures $(J_1,J_2,J_3)$ on $\mathcal{D}$. It is easy to see that any linear combination of a (local) quaternionic structure $\{J_i\}$, say $S = \sum a_i J_i$, satisfies $S^2 = -(\sum a_i^2) I$. Hence, for any non-zero $1$-form $\lambda$ annihilating $\mathcal{D}$, the $2$-form $d\lambda|_\mathcal{D}$ is nondegenerate, proving the fatness of the quaternionic contact structure.
\end{example}

\subsection{Corank-$2$ Fat Distribution}
We now focus on corank-$2$ fat distributions, in particular, on a specific class of such (real) distributions that has similar algebraic properties as the underlying real distribution of a holomorphic contact structure.\medskip

Given a corank-$2$ distribution $\mathcal{D}$, let us assume $\mathcal{D}\underset{loc.}{=}\ker\lambda^1\cap \ker\lambda^2$. Further, assume that $\omega_i = d\lambda^i|_\mathcal{D}$ is nondegenerate. Then we can define a (local) automorphism $A:\mathcal{D}\to \mathcal{D}$ by the following property: $$\omega_1(u,Av) = \omega_2(u,v),\qquad \forall u,v\in\mathcal{D}.$$
Explicitly, $A=-I_{\omega_1}^{-1}\circ I_{\omega_2}$, where $I_{\omega_i}:\mathcal{D}\to\mathcal{D}^*$ is defined by $I_{\omega_i}(v) = \iota_v\omega_i$ for all $v\in \mathcal{D}$. 
The following proposition characterizes corank-$2$ fat distributions, which is a direct consequence of \autoref{prop:fatTupleNoRealEigenvalue}.

\begin{prop}\label{prop:fatEigenValue}
	If $\mathcal{D}$ is fat at $x\in M$, then for some (and hence every) local defining form, the induced automorphism $A_x:\mathcal{D}_x\to \mathcal{D}_x$ has no real eigenvalue. Conversely, if $A_x$ has no real eigenvalue, then $\mathcal{D}$ is fat at $x\in M$.
\end{prop}
\begin{proof}
	The distribution $\mathcal{D}$ is fat at $x$ if and only if for any $0\ne v\in \mathcal{D}_x$ the map (see \autoref{defn:contOmegaRegular}) $$\mathcal{D}_x \ni u\longmapsto \Big(\omega^1(u,v), \; \omega^2(u,v)\Big) = \Big(\omega^1(u,v), \; \omega^1(u, Av)\Big) = -\Big(\iota_v \omega^1, \; \iota_{Av}\omega^1\Big)(u)$$
	is surjective, which is equivalent to linear independence of $\{v, Av\}$ for all $0 \ne v \in \mathcal{D}_x$. Hence, the proof follows.
\end{proof}

Now, given a corank-$2$ fat distribution $\mathcal{D}$ on $M$, we would like to assign an integer to each point $x\in M$. 

\begin{defn}\label{defn:degreeOfFatDistribution}
	Let $\mathcal{D}$ be a corank-$2$ fat distribution on $M$. Then, at each point $x\in M$, we associate a positive integer $\deg(x,\mathcal{D})$ by, $$\text{$\deg(x,\mathcal{D})$ := degree of the minimal polynomial of the automorphism $A_x:\mathcal{D}_x\to\mathcal{D}_x$,}$$ where $A$ is the connecting automorphism as above, for a pair of local $1$-forms defining $\mathcal{D}$ about the point $x$.
\end{defn}

We need to check that this notion of degree is indeed well-defined. Suppose, $$\mathcal{D} \underset{loc.}{=} \ker\lambda^1 \cap \ker\lambda^2 = \ker\mu^1\cap \ker\mu^2,$$
where $\lambda^i,\mu^i$ are local $1$-forms around $x\in M$. Then we can write $$\mu^1 = p\lambda^1 + q\lambda^2,\quad \mu^2 = r\lambda^1 + s\lambda^2,$$
for some local $p,q,r,s\in C^\infty(M)$ such that $\begin{pmatrix}
p & q \\ r & s\end{pmatrix}$ is nonsingular. Note that, $$d\mu^1|_\mathcal{D} = pd\lambda^1|_\mathcal{D} + qd\lambda^2|_\mathcal{D}, \quad d\mu^2|_\mathcal{D}= rd\lambda^1|_\mathcal{D} + s d\lambda^2|_\mathcal{D}.$$
Since $\mathcal{D}$ is fat, we get a pair of (local) automorphisms $A,B:\mathcal{D}\to\mathcal{D}$ defined by $$d\lambda^1(u,Av) = d\lambda^2(u,v),\quad\forall u,v\in\mathcal{D} \qquad\text{ and }\qquad d\mu^1(u,Bv) = d\mu^2(u,v),\quad\forall u,v\in\mathcal{D}.$$
It follows from \autoref{prop:fatEigenValue} that $A_x$ and $B_x$ have no real eigenvalue.
\begin{prop}\label{prop:fatDegMinPoly}
	The minimal polynomials of $A_x$ and $B_x$ have the same degree.
\end{prop}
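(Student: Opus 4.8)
\textit{Proof strategy.} The plan is to show that $A_x$ and $B_x$ generate the \emph{same} commutative subalgebra of $\hom(\calD_x,\calD_x)$, from which the equality of the degrees of their minimal polynomials is immediate. First I would restrict the relations $\mu^1=p\lambda^1+q\lambda^2$, $\mu^2=r\lambda^1+s\lambda^2$ to $\calD$: since $\lambda^1,\lambda^2$ vanish on $\calD$, all the terms $dp\wedge\lambda^1$, $dq\wedge\lambda^2$, $dr\wedge\lambda^1$, $ds\wedge\lambda^2$ drop out upon restriction, so at every point (in particular at $x$) one has $d\mu^1|_\calD=p\,d\lambda^1|_\calD+q\,d\lambda^2|_\calD$ and $d\mu^2|_\calD=r\,d\lambda^1|_\calD+s\,d\lambda^2|_\calD$. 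Writing $\omega^i=d\lambda^i|_{\calD_x}$, this says the two bilinear forms defining $B_x$ are $p\omega^1+q\omega^2$ and $r\omega^1+s\omega^2$.

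Next I would translate the defining equation of $B_x$ through that of $A_x$. Using $\omega^2(u,w)=\omega^1(u,A_xw)$, the identity $(p\omega^1+q\omega^2)(u,B_xv)=(r\omega^1+s\omega^2)(u,v)$ for all $u,v\in\calD_x$ becomes $\omega^1\!\big(u,(pI+qA_x)B_xv\big)=\omega^1\!\big(u,(rI+sA_x)v\big)$, and nondegeneracy of $\omega^1$ yields the operator identity $(pI+qA_x)\,B_x=rI+sA_x$ on $\calD_x$. The operator $pI+qA_x$ is invertible: if $q=0$ then $p\neq0$ because $\begin{pmatrix}p&q\\r&s\end{pmatrix}$ is nonsingular, while if $q\neq0$ then $-p/q\in\mathbb R$ is not an eigenvalue of $A_x$ by \autoref{prop:fatEigenValue}. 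Hence $B_x=(pI+qA_x)^{-1}(rI+sA_x)$; this exhibits $B_x$ as the value at $A_x$ of the Möbius transformation $t\mapsto (r+st)/(p+qt)$, which is genuinely Möbius since $ps-qr\neq0$.

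Since the inverse of an invertible element of the finite-dimensional commutative algebra $\bbR[A_x]$ again lies in $\bbR[A_x]$ (e.g.\ by Cayley--Hamilton), we get $B_x\in\bbR[A_x]$, so $\bbR[B_x]\subseteq\bbR[A_x]$. For the reverse inclusion, rearrange the same identity: as $A_x$ and $B_x$ commute (both lie in $\bbR[A_x]$), $A_x(qB_x-sI)=rI-pB_x$, and $qB_x-sI$ is invertible by the identical argument applied to $B_x$ — here one uses that $\mu^1,\mu^2$ \emph{also} define $\calD$, so $B_x$ has no real eigenvalue by \autoref{prop:fatEigenValue}. Thus $A_x\in\bbR[B_x]$ and $\bbR[A_x]=\bbR[B_x]$. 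Finally, for any endomorphism $T$ the evaluation map $\bbR[t]\to\bbR[T]$ has kernel generated by the minimal polynomial, so $\dim_{\bbR}\bbR[T]$ equals the degree of the minimal polynomial of $T$; applying this to $A_x$ and $B_x$ and using $\bbR[A_x]=\bbR[B_x]$ gives the claim.

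The only genuinely delicate points — and where \autoref{prop:fatEigenValue} is indispensable — are the invertibility of $pI+qA_x$ and of $qB_x-sI$; the rest is routine linear algebra. (An alternative route is to complexify and observe that $A_x$ has non-real spectrum, that the Möbius map $f$ is biholomorphic on a neighborhood of that spectrum, hence $f(A_x)=B_x$ has the same Jordan block sizes as $A_x$; since $f$ also permutes the eigenvalues bijectively, the minimal polynomial degrees agree.)
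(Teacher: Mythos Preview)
Your proof is correct and follows essentially the same approach as the paper: both derive the identity $B_x=(pI+qA_x)^{-1}(rI+sA_x)$, use that inverses of invertible elements in $\bbR[A_x]$ lie in $\bbR[A_x]$ to conclude $B_x\in\bbR[A_x]$ (and symmetrically $A_x\in\bbR[B_x]$), and then identify the degree of the minimal polynomial with $\dim\bbR[T]$. Your version is more explicit about why $pI+qA_x$ and $qB_x-sI$ are invertible, and you spell out the rearranged identity for the reverse inclusion, but the argument is the same.
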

\begin{proof}
	For simplicity, we drop the suffix $x$ in the proof. Note that $(pI + q A) B = r I + s A$, which gives \[B = (p I + q A)^{-1} (r I + s A),\]
	since $A$ has no real eigenvalue. Now, if $T : D \to D$ is invertible, then $T^{-1}$ can be written as a polynomial in $T$. Consequently, $B$ can be written as a polynomial in $A$. Similarly, $A$ can be written as a polynomial in $B$ as well.
	
	Next, recall that for a linear map $T:D\to D$, the degree of minimal polynomial $\mu_T$ is given by $$\deg \mu_T = \dim \Span \{ T^i, \; i\ge 0\} := \dim \langle T^i, \; i\ge 0\rangle.$$
	Now, suppose $S = \sum_{i=1}^k c_iT^i$ is some polynomial expression in $T$. But then for any $i \ge 0$ we have $S^i \in  \langle T^i, \; i\ge 0 \rangle = \langle I, T,\ldots, T^{d-1} \rangle$, where $d = \deg \mu_T$. Hence, $\deg \mu_S= \dim  \langle S^i,\; i\ge 0\rangle \le d = \deg \mu_T$. The proof then follows.
\end{proof}
In particular, we thus have that the notion of degree at a point is independent of the choice of $\lambda_1, \lambda_2$.
\begin{prop}
	Given a corank $2$ fat distribution $\mathcal{D}$ on $M$, the map $x\mapsto \deg (x,\mathcal{D})$ is lower semi-continuous.
\end{prop}
\begin{proof}
	Without loss of generality, we assume that $\mathcal{D}=\ker\lambda^1\cap\ker\lambda^2$. Suppose $d=\deg(x,\mathcal{D})$. Consider the map,
	\begin{align*}
		\phi:\mathcal{D} &\to \Lambda^d\mathcal{D}\\
		v &\mapsto v\wedge Av \wedge \ldots \wedge A^{d-1} v
	\end{align*}
	where $A:\mathcal{D}\to\mathcal{D}$ is the relating automorphism associated to $\omega^1,\omega^2$, where $\omega^i = d\lambda^i|_\mathcal{D}$. Clearly, $\phi$ is continuous and there exists $v_0\in\mathcal{D}_x$ such that $\phi(v_0)\ne 0$. Hence, $\phi_y$ must be nonzero for all $y$ in some neighborhood $U$ of $x$. Therefore, $\deg(y,\mathcal{D})\ge d$ for all $y\in U$. This proves the lower semi-continuity.
\end{proof}

\begin{obs} \label{obs:fatDegree}
	We make a few observations about the degree.
	\begin{enumerate}
		\item \label{obs:fatDegree:1} Since $A_x$ has no real eigenvalue, it follows that $\deg(x,\mathcal{D})$ is even for all $x$.

		\item \label{obs:fatDegree:2} Furthermore, $\deg(x,\mathcal{D}) \le \frac{1}{2}\rk\mathcal{D}$. Indeed, we note that the operators $A$ under consideration are \emph{skew-Hamiltonian}. Recall that an operator $T:D\to D$ on a symplectic vector space $(D,\omega)$, is skew-Hamiltonian if $(u,v) \mapsto \omega(u, Tv)$ is a skew-symmetric tensor on $D$. The observation then follows from \cite{skewHamitonianStructure}.
		
		\item \label{obs:fatDegree:3} In particular, it then follows that a fat distribution of type $(4,6)$ is always of degree $2$.
	\end{enumerate}
\end{obs}

\begin{defn}
	A corank-$2$ fat distribution $\mathcal{D}$ on $M$ is said to have degree $d$, if $d = \deg(x,\mathcal{D})$ for every $x\in M$.
\end{defn}

\begin{example} \label{exmp:holomorphicContactDeg2}
	In the example of holomorphic contact structure \autoref{exmp:holomorphicContact}, the $2$-forms $d\lambda^1|_\mathcal{D}$ and $d\lambda^2|_\mathcal{D}$ are related by $d\lambda^1(u, Jv) = - d\lambda^2(u,v)$ for $u, v\in \mathcal{D}$, where $J$ is the (integrable) almost complex structure on $TM$. Hence, the underlying real distribution is degree $2$ fat.
\end{example}

\begin{remark}
	We would like to remark here that there exist degree $2$ fat distribution germs which are not equivalent to holomorphic contact structures. The minimum dimension of a manifold admitting a corank-$2$ fat distribution is $6$, in which case all fat distributions are of degree $2$. Hence, degree $2$ fat distribution germs form an open set. By a result of Montgomery \cite{montGeneric}, generic distribution germs in this dimension do not admit a local frame that generates a finite dimensional Lie algebra. Hence, there are germs of degree $2$ fat distributions which are not equivalent to germs of holomorphic contact distributions. We also refer to a result of Cap and Eastwood \cite{capEastwoodSpecialDim6}.
\end{remark}

\section{$h$-Principle and Existence of $K$-Isocontact Immersions}
\label{sec:application}
We shall now obtain the $h$-principle for $\Omega$-regular, $K$-isocontact immersions $(\Sigma, K)\to (M,\mathcal{D})$, where $\mathcal{D}$ will be a degree $2$ fat distribution or a quaternionic contact structure.

\subsection{Isocontact Immersions into Degree $2$ Fat Distribution}\label{sec:hPrinIntoFat:isocontact}
Throughout this section, $\mathcal{D}$ is a degree $2$ fat distribution on $M$ and $K$ is a contact structure on $\Sigma$. Let us first note the following.
\begin{prop}\label{prop:isocontactIsOmegaRegular}
	Any formal isocontact immersion $F:(T\Sigma,K)\to (TM,\mathcal{D})$ satisfying the curvature condition (\autoref{defn:relCont}) is $\Omega$-regular.
\end{prop}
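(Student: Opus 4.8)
The plan is to reduce the statement, fiber by fiber over $\Sigma$, to the purely linear-algebraic \autoref{prop:fatTupleIsoContact} (\hyperref[prop:fatTupleIsoContact:1]{1}). Fix $x\in\Sigma$ and put $y=\bs F(x)$, $W=K_x$ and $V=F(W)\subset\calD_y$; we must show that $V$ is an $\Omega$-regular subspace of $\calD_y$. The only hypotheses to be used are that $K$ is a contact structure — so that $\Omega_K$ is a nondegenerate, line-bundle-valued $2$-form on $K$ — that $\tilde F$ is injective, and that $F$ obeys the curvature condition $F^*\Omega|_K=\tilde F\circ\Omega_K$.

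The key step is to choose trivializations of the line bundle $T\Sigma/K$ and of the rank-$2$ bundle $TM/\calD$ adapted to $\tilde F$. Fix a generator $\theta$ of $T\Sigma/K|_x$; since $K$ is contact, $\Omega_K|_x=\omega_K\otimes\theta$ for a genuine symplectic form $\omega_K$ on $W$. As $\tilde F$ is injective and $T\Sigma/K|_x$ is $1$-dimensional, $e_1:=\tilde F(\theta)\in TM/\calD|_y$ is nonzero; extend it to a basis $\{e_1,e_2\}$ of $TM/\calD|_y$ and trivialize $TM/\calD$ near $y$ by this frame, obtaining scalar defining forms $\lambda^1,\lambda^2$ with $\calD=\ker\lambda^1\cap\ker\lambda^2$ and $\Omega$ identified with $(\omega^1,\omega^2)=(d\lambda^1|_\calD,d\lambda^2|_\calD)$. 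By fatness of $\calD$ both $\omega^1$ and $\omega^2$ are nondegenerate on $\calD_y$, so $(\calD_y,\omega^1,\omega^2)$ is a fat tuple.

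Next I would unwind the curvature condition in these coordinates. For $a,b\in W$ it reads
\[\Omega(Fa,Fb)=\tilde F\big(\omega_K(a,b)\,\theta\big)=\omega_K(a,b)\,e_1,\]
so comparing the $e_1$- and $e_2$-components gives $\omega^1(Fa,Fb)=\omega_K(a,b)$ and $\omega^2(Fa,Fb)=0$ for all $a,b\in W$. Since $F|_W$ is injective and $\omega_K$ is nondegenerate, the first identity shows $\omega^1|_V$ is nondegenerate, i.e.\ $V$ is $\omega^1$-symplectic, while the second shows $V$ is $\omega^2$-isotropic. Applying \autoref{prop:fatTupleIsoContact} (\hyperref[prop:fatTupleIsoContact:1]{1}) to the fat tuple $(\calD_y,\omega^1,\omega^2)$ and the subspace $V$ yields that $V$ is $(\omega^1,\omega^2)$-regular, which is precisely the assertion that $V$ is $\Omega$-regular; as $x$ was arbitrary, $F$ is $\Omega$-regular.

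There is no real obstacle here — the argument is essentially bookkeeping — and the one point deserving care is precisely the adapted choice of trivialization, i.e.\ arranging $\tilde F(\theta)=e_1$. It is this choice (possible exactly because $\tilde F$ is injective, hence nonzero on the line $T\Sigma/K|_x$) that forces $\omega^2|_V=0$ and lets the contact nondegeneracy of $\omega_K$ pass to $\omega^1|_V$. I note that the same computation, read in reverse, is what underlies the assertion in the proof of \autoref{thm:hPrinIsocontactDeg2} that the image of a formal isocontact immersion is $\omega^1$-symplectic and $\omega^2$-isotropic.
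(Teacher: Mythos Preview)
Your proof is correct and follows essentially the same approach as the paper: work pointwise, choose trivializations of $T\Sigma/K$ and $TM/\calD$ so that $\tilde F_x$ becomes the inclusion $\bbR\hookrightarrow\bbR\times\{0\}\subset\bbR^2$, read the curvature condition as $F^*\omega^1|_K=\eta$ (nondegenerate since $K$ is contact) and $F^*\omega^2|_K=0$, and then invoke \autoref{prop:fatTupleIsoContact}~(\ref{prop:fatTupleIsoContact:1}). Your version is a bit more explicit about how the adapted trivialization is built, but the argument is the same.
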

\begin{proof}
	Let $x\in \Sigma$ and $F_x : T_x\Sigma \to T_y M$ be the restriction of $F$ to $T_x \Sigma$. We choose some trivializations of $T\Sigma/K$ and $TM/\mathcal{D}$ near $x$ and $y$, respectively, such that $\tilde F_x$ is the canonical injection $\mathbb{R} \to \mathbb{R}\times \{0\} \subset \mathbb{R}^2$. Hence, there exist local $2$-forms $\eta, \omega^1,\omega^2$ such that $\Omega_K \underset{loc.}{=} \eta$ and $\Omega \underset{loc.}{=}(\omega^1,\omega^2)$ with respect to the trivializations, and the curvature condition $F^*\Omega|_K = \tilde F\circ\Omega_K$ translates into $$F^*\omega^1|_K = \eta, \quad F^*\omega^2|_K = 0.$$
	Since $K$ is contact, $\eta$ is nondegenerate. Hence, $V = F(K)$ is $\omega^1$-symplectic and $\omega^2$-isotropic. By \autoref{prop:fatTupleIsoContact} (\ref{prop:fatTupleIsoContact:1}) $V$ is $(\omega^1,\omega^2)$-regular and hence, $F$ is $\Omega$-regular.
\end{proof}

\begin{theorem}\label{thm:hPrinIsocontactDeg2}
	$\relICont$ satisfies the $C^0$-dense $h$-principle, provided $\rk\mathcal{D} \ge 2\rk K + 4$.
\end{theorem}
\begin{proof}
	We embed $(\Sigma,K)$ in $(\tilde\Sigma, \tilde K)$ where $\tilde \Sigma = \Sigma \times \mathbb{R}$, $\tilde K = d\pi^{-1} K = K \times \mathbb{R}$, and $\pi : \tilde \Sigma\to \Sigma$ is the  projection. We have the associated relation $\relIContTilde\subset J^1(\tilde\Sigma,M)$. In view of \autoref{thm:hPrinGenExtnHoriz}, it is enough to show that the map $ev : \relIContTilde|_O\to \relICont|_O$ induces a surjective map on sections over any contractible $O\subset \Sigma$.

	We first show that $ev$ is fiberwise surjective. Suppose $(x,y,F)$ is a jet in $\relICont$ and let $V = F(K_x) \subset \mathcal{D}_y$. Proceeding as in the proof of \autoref{prop:isocontactIsOmegaRegular}, we can show $V$ is $\omega^1$-symplectic and $\omega^2$-isotropic, with respect to a suitable choice of trivializations. Hence, by \autoref{prop:fatTupleIsoContact} (\hyperref[prop:fatTupleIsoContact:2]{2}) $V^\Omega \cap {V^\Omega}^\Omega = 0$. Now, $V$ being an $\Omega$-regular subspace, the codimension of $V^\Omega$ in $\mathcal{D}_y$ is $2\dim V = 2\rk K$ (\autoref{prop:fatTupleOmegaRegular}). Hence, it follows from the dimension condition that $\dim V^\Omega \ge 4$. So we can choose $0 \ne \tau \in V^\Omega$. Since $\tau \not \in {V^\Omega}^\Omega$, it follows from \autoref{prop:fatTupleExtension} that $V^\prime = V+\langle\tau\rangle$ is an $\Omega$-regular subspace of $\mathcal{D}_y$. Define an extension $\hat{F} : T_x\Sigma \times \mathbb{R}\to T_y M$ of $F$ by $$\hat{F}(v, t) = F(v) + t\tau \; \text{for $t\in \mathbb{R}$ and $v\in T_x\Sigma$.}$$ It is then immediate that $\hat{F}^{-1}(\mathcal{D}_y) = \tilde{K}_x$ and $\hat{F}$ is $\Omega$-regular. Furthermore, for $(v_i, t_i) \in \tilde{K}_x = K_x \oplus\mathbb{R}, i=1,2$, we have
	\begin{align*}
		\Omega\big( \hat F(v_1, t_1), \hat F(v_2,t_2)\big) 
		&= \Omega(F(v_1), F(v_2)), \; \text{as $\tau \in V^{\Omega} = \big(F(K_x)\big)^{\Omega}$}\\
		&= \tilde F \circ \Omega_{K_x}\big(v_1, v_2\big), \; \text{as $F^*\Omega|_K = \tilde F \circ \Omega_K$}\\
		&= \tilde{\hat F} \circ \Omega_{\tilde K_x} \big((v_1, t_1), (v_2, t_2)\big),
	\end{align*}
	where $\tilde{\hat F} : T\tilde \Sigma/\tilde K|_{(x,t)} \to TM/\mathcal{D}|_y$ is the map induced by $\hat F$ and $\Omega_{\tilde{K}}$ is the curvature form of $\tilde{K}$. Note that after identifying $T\tilde{\Sigma}/\tilde{K} = \pi^*(T\Sigma/K)$, we get $\Omega_{\tilde{K}} = \pi^*\Omega_K$. In other words, $\hat{F}$ satisfies the curvature condition relative to $\Omega_{\tilde K}$ and $\Omega$.

	Now suppose $(F,u) : T\Sigma \to TM$ is a bundle map representing a section of $\relICont$, with $u = \bs F : \Sigma \to M$ being the base map of $F$. It follows from the above discussion that we have two vector subbundles of $u^*TM$ defined as follows: 
	\[T\Sigma^\Omega := \bigcup_{\sigma \in \Sigma} \big(F(K_\sigma)\big)^\Omega \quad \text{and} \quad {T\Sigma^\Omega}^\Omega := \bigcup_{\sigma \in \Sigma} {\big(F(K_\sigma)\big)^\Omega}^\Omega,\]
	such that $T\Sigma^\Omega \cap {T\Sigma^\Omega}^\Omega = 0$ and $\rk T\Sigma^\Omega \ge 4$, as discussed in the previous paragraph. Then, using a local vector field $\tau$ in $T\Sigma^\Omega$, we can extend $F$ to a bundle monomorphism $\hat F: T(O\times \mathbb{R}) \to TM$, over an arbitrary contractible open set $O\subset \Sigma$. Clearly $\hat F$ is a section of $\relIContTilde|_O$. Thus, $ev : \Gamma \relIContTilde \to \Gamma \relICont$ is surjective on such $O$. The proof then follows by a direct application of \autoref{thm:hPrinGenExtnHoriz}.
\end{proof}

\subsubsection{Existence of Isocontact Immersions}
In view of \autoref{prop:isocontactIsOmegaRegular} we have the simpler description $$\relICont = \Big\{ (x,y, F) \; \Big|\; \text{$F$ is injective, \; $F^{-1}\mathcal{D}_y = K_x$, \; $F^*\Omega|_{K_x} = \tilde F\circ \Omega_K|_x$} \Big\}.$$
In order to prove the existence of a $K$-isocontact immersion, we thus need to produce a monomorphism $F:T\Sigma \to TM$ such that $F^{-1}\mathcal{D} = K$ and $F^*\Omega|_K = \tilde F \circ \Omega_K$. The existence of $F$ implies the existence of a monomorphism $G : T\Sigma/K \to TM/\mathcal{D}$. Conversely, given such a $G$ we can produce an $F$ as above, with $\tilde{F} = G$, under the condition $\rk \mathcal{D} \ge 3\rk K - 2$. Suppose $G$ covers the map $u : \Sigma \to M$. We construct a subbundle $\mathcal{F}\subset \hom( K, u^*\mathcal{D})$, where the fibers are given by $$\mathcal{F}_x = \Big\{F: K_x \to \mathcal{D}_{u(x)} \;\Big|\; \text{$F$ is injective and $F^*\Omega|_{ K_x} = G_x\circ \Omega_K$}\Big\}, \quad\text{for $x\in \Sigma$.}$$
We wish to get a global section of the bundle $\mathcal{F}$. Towards this end, we need to determine the connectivity of the fibers $\mathcal{F}_x$.

We consider the following linear algebraic setup. Let $(D,\omega^1,\omega^2)$ be a degree $2$ fat tuple with $\dim D = d$ and $A:D\to D$ be the connecting automorphism for the pair $(\omega^1,\omega^2)$. Define the subspace $R(k)\subset V_{2k}(D)$ as follows: $$R(k) = \Big\{b=(u_1,v_1,\ldots,u_k,v_k) \in V_{2k}(D) \;\Big|\; \substack{\text{$b$ is a symplectic basis for $\omega^1|_V$ and $V$ is $\omega^2$-isotropic},\\ \text{where $V = \big \langle u_i,v_i, \; i=1,\ldots,k \big\rangle$}}\Big\}.$$
We can identify the fiber $\mathcal{F}_x$ with $R(k)$, by fixing a symplectic basis of $K_x$.

\begin{lemma}\label{lemma:connectivityOfR(k)IsocontactDeg2Fat}
	The space $R(k)$ is $d-4k + 2$-connected.
\end{lemma}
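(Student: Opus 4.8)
The plan is to prove this by induction on $k$, peeling off the last symplectic pair via the forgetful map $\pi_k\colon R(k)\to R(k-1)$, $(u_1,v_1,\dots,u_k,v_k)\mapsto(u_1,v_1,\dots,u_{k-1},v_{k-1})$. For $b'=(u_1,v_1,\dots,u_{k-1},v_{k-1})\in R(k-1)$ with span $W$, the form $\omega^1|_W$ is the standard symplectic form (hence nondegenerate) and $W$ is $\omega^2$-isotropic, so by \autoref{prop:fatTupleIsoContact} the space $W$ is $(\omega^1,\omega^2)$-regular and $W^\Omega$ is symplectic for both $\omega^1$ and $\omega^2$; moreover $\codim W^\Omega=2\dim W=4(k-1)$ is constant on $R(k-1)$ by \autoref{prop:fatTupleOmegaRegular}. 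Thus the subspaces $W^\Omega$ assemble into a vector bundle over $R(k-1)$, which is what will let me treat $\pi_k$ as a locally trivial fibration.

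First I would identify the fibre $\pi_k^{-1}(b')$. A pair $(u_k,v_k)$ completes $b'$ to a point of $R(k)$ precisely when $u_k,v_k\in W^\Omega$, $\omega^1(u_k,v_k)=1$ and $\omega^2(u_k,v_k)=0$; all the remaining symplectic-basis relations and $\omega^2$-isotropy conditions are automatic because $W^\Omega\subset W^{\perp_1}\cap W^{\perp_2}$. Forgetting $v_k$ then displays $\pi_k^{-1}(b')$ as a bundle over $W^\Omega\setminus\{0\}\simeq S^{\dim W^\Omega-1}=S^{d-4k+3}$, whose fibre over a fixed $u_k$ is $\{v_k\in W^\Omega : \omega^1(v_k,u_k)=1,\ \omega^2(v_k,u_k)=0\}$. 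Using $\omega^2(\cdot,u_k)=\omega^1(\cdot,Au_k)$, this fibre is the intersection, inside the symplectic space $(W^\Omega,\omega^1)$, of an affine hyperplane determined by $u_k$ and a linear hyperplane determined by $Au_k$.

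The crux of the argument — and the step I expect to take the most care — is that $W^\Omega$ is $A$-invariant. This holds because $(W^\Omega)^{\perp_1}=W+AW$ by \autoref{prop:fatTupleVAV}, the subspace $W+AW$ is $A$-invariant (the minimal polynomial of $A$ has degree $2$, so $A^2\in\langle I,A\rangle$), and the $\omega^1$-orthogonal complement of any $A$-invariant subspace is again $A$-invariant, which follows in two lines from the identity $\omega^1(Av,u)=\omega^1(v,Au)$. Granting $A$-invariance, $Au_k\in W^\Omega$ and $Au_k\neq0$ (as $A$ is an isomorphism); since the tuple is fat, $u_k$ and $Au_k$ are linearly independent, and as $\omega^1|_{W^\Omega}$ is nondegenerate the functionals $\omega^1(\cdot,u_k)$ and $\omega^1(\cdot,Au_k)$ on $W^\Omega$ are independent. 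Hence the affine and linear hyperplanes meet transversally in a nonempty affine subspace of dimension $\dim W^\Omega-2=d-4k+2$, which is contractible. Therefore $\pi_k^{-1}(b')$ is an affine bundle over $S^{d-4k+3}$, so $\pi_k^{-1}(b')\simeq S^{d-4k+3}$, which is $(d-4k+2)$-connected.

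Finally I would run the induction. The base case $R(1)$ is exactly the case $W=0$ of the above: it fibres over $D\setminus\{0\}\simeq S^{d-1}$ with contractible fibres, hence is $(d-2)$-connected. For the inductive step, $\pi_k$ is a fibration whose base $R(k-1)$ is $(d-4(k-1)+2)=(d-4k+6)$-connected by the inductive hypothesis and whose fibre is $(d-4k+2)$-connected, so the homotopy long exact sequence gives that $R(k)$ is $\min\{d-4k+6,\,d-4k+2\}=(d-4k+2)$-connected. When $d-4k+2<0$ the statement is vacuous, and in the relevant range $d\ge 4k-2$ one has $\dim W^\Omega=d-4k+4\ge 2$ at every stage of the induction, so all the spheres appearing above are genuine and nonempty and the argument goes through unchanged.
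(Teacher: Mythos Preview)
Your proof is correct and follows essentially the same approach as the paper: induction on $k$ via the forgetful map $R(k)\to R(k-1)$, identifying the fibre over a frame with span $W$ as the space of pairs $(u,v)$ in $W^\Omega$ with $\omega^1(u,v)=1$ and $\omega^2(u,v)=0$, and showing this fibre is $(d-4k+2)$-connected by displaying it as an affine bundle over $W^\Omega\setminus\{0\}$. The paper packages the inductive step slightly differently---it observes that $(W^\Omega,\omega^1|_{W^\Omega},\omega^2|_{W^\Omega})$ is again a degree~$2$ fat tuple and invokes the base case---but your direct analysis of the fibre and your argument for the $A$-invariance of $W^\Omega$ are equivalent to what the paper does.
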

\begin{proof}
	We proceed by induction on $k$. For $k=1$, $$R(1) = \Big\{(u,v) \in V_2(D) \;\Big|\; \omega^1(u,v) = 1\text{ and }\omega^2(u,v) = 0\Big\}.$$
	For fixed $u \in D$, consider the linear map
	\begin{align*}
		S_u : \langle u\rangle^{\perp_2} &\to \mathbb{R} \\
		v &\mapsto \omega^1(u,v)
	\end{align*}
	so that we have $R(1) = \bigcup_{u\in D\setminus 0} \{u\}\times S_u^{-1}(1)$. As $(D,\omega^1,\omega^2)$ is a fat tuple, every non-zero $u$ is $(\omega^1,\omega^2)$-regular and hence $\ker S_u = \langle u\rangle^{\perp_1} \cap \langle u\rangle^{\perp_2} = \langle u\rangle^{\Omega}$ is a codimension $1$ hyperplane in $\langle u\rangle^{\perp_2}$. Therefore, $S_u^{-1}(1)$ is an affine hyperplane. Thus, $R(1)$ is homotopically equivalent to the space of nonzero vectors $u$ in $D$ and so $R(1)$ is $(d-2)$-connected. Note that $d - 2 = d - 4.1 + 2$.
	
	Let us now assume that $R(k-1)$ is $d - 4(k-1) + 2 = d - 4k + 6$-connected for some $k\ge 2$. Observe that the projection map $p : V_{2k}(D)\to V_{2k-2}(D)$ maps $R(k)$ into $R(k-1)$. For a fixed tuple $b=(u_1,v_1,\ldots,u_{k-1},v_{k-1})\in R(k-1)$, the span $V = \langle u_1,\ldots,v_{k-1}\rangle$ is $\omega^1$-symplectic and $\omega^2$-isotropic. By an application of \autoref{prop:fatTupleIsoContact} (\hyperref[prop:fatTupleIsoContact:3]{3}) we have $V^\Omega$ is symplectic with respect to both $\omega^1$ and $\omega^2$. Moreover, since $A$ has degree $2$ minimal polynomial, we have $A(V+AV) = V+AV$. Consequently, it follows from \autoref{obs:fatTupleBasic} that, $$A(V^\Omega) = A\big((V+AV)^{\perp_2}\big) = \big(A(V+AV)\big)^{\perp_1} = (V+AV)^{\perp_1} = V^\Omega.$$
	Hence, $(\hat D, \omega^1|_{\hat D}, \omega^2|_{\hat D})$ is again a degree $2$ fat tuple, where $\hat D = V^\Omega$. Now, if we choose any $(u,v)\in V_2(\hat D)$, satisfying $\omega^1(u,v) =1$ and $\omega^2(u,v)=0$, it follows that $(u_1,\ldots,v_{k-1},u,v)\in R(k)$. In fact, we may identify the fiber $p^{-1}(b)$ with the space $$\big\{(u,v)\in V_2(\hat D) \;\big|\; \text{$\omega^1(u,v)=1,\omega^2(u,v)=0$}\big\},$$
	which is $(\dim V^\Omega -2)$-connected as it has been already noted above. Since $V$ is $\Omega$-regular, we get that $$\dim V^\Omega = \dim (V+AV)^{\perp_1} = \dim D - 2\dim V = d - 4(k-1) = d - 4k + 4.$$
	Thus, $p^{-1}(b)$ is $\dim V^\Omega - 2 = (d - 4k + 4) - 2 = d - 4k + 2$-connected.
	
	An application of the homotopy long exact sequence to the bundle $p : R(k)\to R(k-1)$ then gives us that $$\pi_i\big(R(k)\big) = \pi_i \big(R(k-1)\big),\quad\text{for $i\le d - 4k + 2$.}$$
	By induction hypothesis we have $$\pi_i\big(R(k)\big) = \pi_i \big(R(k-1)\big) = 0, \quad\text{for $i\le d - 4k + 2$.}$$
	Hence, $R(k)$ is $d-4k + 2$-connected. This concludes the proof.
\end{proof}

\begin{remark}\label{rmk:existenceGermIsoCont}
	It follows from the proof of the above theorem that $R(k)$ is non-empty for $\dim D \ge 4k$. This implies, from the local h-principle for $\relICont$, the existence of germs of $K$-isocontact immersions in a degree $2$ fat distribution $\mathcal{D}$, provided $K$ is contact and $\rk \mathcal{D} \ge 2 \rk K$.
\end{remark}

\begin{theorem}\label{thm:existenceIsocontact}
	Any map $u:\Sigma\to M$ can be homotoped to an isocontact immersion $(\Sigma, K)\to (M,\mathcal{D})$ provided $\rk\mathcal{D} \ge \max\{2\rk K + 4, \; 3\rk K - 2\}$, and one of the following two conditions holds true:
	\begin{itemize}
		\item both $ K$ and $\mathcal{D}$ are cotrivializable.
		\item $H^1(\Sigma,\mathbb{Z}_2) = 0 = H^2(\Sigma, \mathbb{Z})$.
	\end{itemize}
	Furthermore, the base level homotopy can be made arbitrary $C^0$-close to $u$.
\end{theorem}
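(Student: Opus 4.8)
The plan is to reduce the statement to the construction of a formal solution and then invoke \autoref{thm:hPrinIsocontactDeg2}. Since $\rk\calD\ge 2\rk K+4$, that theorem gives the $C^0$-dense $h$-principle for $\relICont$, so every global section of $\relICont$ can be homotoped to a holonomic one through sections whose base maps remain $C^0$-close to the original map. Thus it is enough to produce a \emph{formal} $K$-isocontact immersion with base map $u$; by \autoref{prop:isocontactIsOmegaRegular} and the simplified description of $\relICont$ following it, this means a bundle monomorphism $F\colon T\Sigma\to u^*TM$ with $F^{-1}\calD=K$ fibrewise and $F^*\Omega|_K=\tilde F\circ\Omega_K$. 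The genuine isocontact immersion so obtained is $C^0$-close to $u$, hence homotopic to $u$, which is all that is required.

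I would build $F$ in two stages. First, construct the induced monomorphism of quotient bundles $G=\tilde F\colon T\Sigma/K\to u^*TM/\calD$. As $K$ is a contact structure, $T\Sigma/K$ is a line bundle while $u^*TM/\calD$ has rank $2$, so a monomorphism is the same as a nowhere-vanishing section of the rank-$2$ bundle $\hom(T\Sigma/K,u^*TM/\calD)$. Since $\pi_q(S^1)=0$ for $q\ge 2$, the only obstruction to such a section is the Euler class of this bundle, lying in $H^2(\Sigma)$ with the appropriate (possibly twisted) coefficients; it vanishes when $H^2(\Sigma)=0$. When $K$ and $\calD$ are cotrivializable, $T\Sigma/K$ and $TM/\calD$ are trivial, so $\hom(T\Sigma/K,u^*TM/\calD)$ is a trivial rank-$2$ bundle and again carries a nowhere-vanishing section. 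In particular $u$ need not be modified at this stage.

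Second, fix such a $G$ and a splitting $T\Sigma=K\oplus Q$. Extending a monomorphism $F|_K\colon K\to u^*\calD$ to a monomorphism $F\colon T\Sigma\to u^*TM$ with $\tilde F=G$ amounts to lifting $G|_Q$ along the projection $u^*TM\to u^*TM/\calD$; the lifts form an affine space modelled on $\Gamma\hom(Q,u^*\calD)$, so this extension is unobstructed, and $F^{-1}\calD=K$ together with injectivity of $F$ follow automatically from injectivity of $G$. Since the curvature condition $F^*\Omega|_K=\tilde F\circ\Omega_K$ and the injectivity of $F|_K$ depend only on $F|_K$, everything comes down to finding a section of the subbundle $\calF\subset\hom(K,u^*\calD)$ with fibres $\calF_x=\{F\colon K_x\to\calD_{u(x)}\mid F\text{ injective},\ F^*\Omega|_{K_x}=G_x\circ\Omega_K\}$, the bundle introduced just before \autoref{lemma:connectivityOfR(k)IsocontactDeg2Fat}.

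Finally, after choosing trivializations of $T\Sigma/K$ and $u^*TM/\calD$ adapted to $G$ and a symplectic frame of $(K_x,\Omega_K)$, the curvature condition reads $F^*\omega^1|_{K_x}$ nondegenerate and $F^*\omega^2|_{K_x}=0$, which identifies $\calF_x$ with the space $R(k)$ of \autoref{lemma:connectivityOfR(k)IsocontactDeg2Fat} for $2k=\rk K$; by that lemma $R(k)$ is $(\rk\calD-2\rk K+2)$-connected. Because $K$ is contact, $\dim\Sigma=\rk K+1$, so the hypothesis $\rk\calD\ge 3\rk K-2$ yields $\rk\calD-2\rk K+2\ge\dim\Sigma-1$; thus the fibre $R(k)$ is $(\dim\Sigma-1)$-connected, and standard obstruction theory (all obstruction classes live in $H^q(\Sigma;\pi_{q-1}R(k))=0$ for $q\le\dim\Sigma$) produces a section of $\calF$ over $\Sigma$. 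Assembling this with the two previous stages gives the required formal solution, and the $h$-principle finishes the proof. The main obstacle is the quotient-bundle step: pinning down the correct twisted Euler-class obstruction and verifying that each of the two hypotheses on $\Sigma$ kills it; the rest is a chain of affine (hence unobstructed) extensions and a dimension count feeding into \autoref{lemma:connectivityOfR(k)IsocontactDeg2Fat}.
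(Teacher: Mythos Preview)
Your proposal is correct and follows essentially the same route as the paper: construct the quotient monomorphism $G$ using the cotrivializability or $H^2(\Sigma)=0$ hypothesis, build the fibre bundle $\calF$, invoke \autoref{lemma:connectivityOfR(k)IsocontactDeg2Fat} together with the dimension count $\rk\calD\ge 3\rk K-2$ to obtain a section, and finish with \autoref{thm:hPrinIsocontactDeg2}. Your explicit description of the extension step (splitting $T\Sigma=K\oplus Q$ and lifting $G|_Q$) is a helpful elaboration of what the paper leaves implicit, but otherwise the arguments coincide.
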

\begin{proof}
	Suppose $u:\Sigma\to M$ is any given map. We first observe the implication of the second part of the hypothesis. If both $ K$ and $\mathcal{D}$ are given to be cotrivializable, then there exists an injective bundle morphism $G: T\Sigma/ K \to u^*TM/\mathcal{D}$. In general, the obstruction to the existence of a non-vanishing section of the rank $2$ bundle $E = \hom(T\Sigma/K, u^*TM/\mathcal{D})$ is the Euler class $e(E) \in H^2(\Sigma,\mathbb{Z})$, provided $E$ is orientable \cite[Remarks 6.2, pg. 301]{husemollerFiberBundle}. The obstruction to the orientability of the bundle $E$ is determined by the first Stiefel-Whitney class $w_1(E) \in H^1(\Sigma, \mathbb{Z}_2)$. Hence, with $H^1(\Sigma,\mathbb{Z}_2) = 0 = H^2(\Sigma, \mathbb{Z} )$, we have the required bundle map.
	
	Now, for a fixed monomorphism $G$, we construct the fiber bundle $\mathcal{F}=\mathcal{F}(u,G)\subset \hom( K, u^*TM)$ as discussed above. By \autoref{lemma:connectivityOfR(k)IsocontactDeg2Fat}, the fibers of $\mathcal{F}$ are $d-4k+2$ connected, where $\rk\mathcal{D}= d$ and $\rk K = 2k$. From the hypothesis we have, $$\rk\mathcal{D} \ge 3\rk K - 2 = 6k - 2 \;\Leftrightarrow\; d - 4k + 2 \ge 2k = \dim \Sigma -1.$$
	Hence, we have a global section $\hat F\in\Gamma\mathcal{F}$, which defines a formal, $ K$-isocontact immersion $F : T\Sigma\to u^*TM$ covering $u$, satisfying $F|_\mathcal{D}= \hat F$ and $\tilde{F} = G$. The proof now follows from a direct application of \autoref{thm:hPrinIsocontactDeg2}, since $\rk \mathcal{D}\ge 2 \rk K + 4$ by the hypothesis.
\end{proof}

\autoref{thm:mainTheoremIsoContDeg2}, stated in the introduction, is a restatement of \autoref{thm:hPrinIsocontactDeg2} and \autoref{thm:existenceIsocontact}.\medskip

\subsection{Horizontal Immersions into Degree $2$ Fat Distribution}\label{sec:hPrinIntoFat:horiz}
\begin{theorem}\label{thm:hPrinHorizImmFatDeg2}
	Suppose $\mathcal{D}\subset TM$ is a degree $2$ fat distribution on a manifold $M$ and $\Sigma$ is an arbitrary manifold. Then $\relHor$ satisfies the $C^0$-dense $h$-principle provided, $\rk\mathcal{D} \ge 4\dim\Sigma$.
\end{theorem}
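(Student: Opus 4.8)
The plan is to realise $\relHor$ as the restriction (via $ev$) of a contact relation on the one-jet space of $\Sigma$, and then invoke \autoref{thm:hPrinGenExtnContact}. First I would embed $\Sigma$ into $\tilde\Sigma := J^1(\Sigma,\bbR)$ as the zero section, which is Legendrian for the standard contact structure $\xi_{std}$; thus $\xi_{std}|_\Sigma\cap T\Sigma = T\Sigma$, and $T\Sigma$ is a Lagrangian subbundle of $(\xi_{std}|_\Sigma,\Omega_{\xi_{std}})$. By \autoref{thm:hPrinGenExtnContact} it then suffices to show that $\relContTilde$ -- the relation of $\Omega$-regular $\xi_{std}$-contact immersions $\tilde\Sigma\to M$ -- is a microextension of $\relHor$ (\autoref{defn:microextension}). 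Microflexibility of $\solContTilde$ and the local parametric $h$-principle for $\relContTilde$ are already in hand (\autoref{obs:relContMicroflexibleLocalWHE} and the surrounding discussion), exactly as for \autoref{thm:hPrinIsocontactDeg2}; so the one new ingredient is that $ev:\relContTilde|_O\to\relHor|_O$ be surjective on sections over contractible charts $O\subset\Sigma$, and, as in \autoref{thm:hPrinIsocontactDeg2}, this reduces to a fibrewise extension statement of pure linear algebra.

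The linear-algebraic heart I would isolate as a lemma: \emph{if $(D,\omega^1,\omega^2)$ is a degree $2$ fat tuple with $\dim D\ge 4n$ and $V\subset D$ is an $\Omega$-regular, $\Omega$-isotropic subspace of dimension $n$, then $V$ is contained in a subspace $W$, $\dim W=2n$, such that for a suitable basis $\{\theta^1,\theta^2\}$ of $\langle\omega^1,\omega^2\rangle$ one has $\theta^1|_W$ nondegenerate and $\theta^2|_W=0$} (whence $V$, being $\theta^1$-isotropic of half dimension in $W$, is automatically Lagrangian in $(W,\theta^1|_W)$). Granting this, the fibrewise surjectivity of $ev$ follows as in \autoref{thm:hPrinIsocontactDeg2}: given $(x,y,F)\in\relHor$, set $V=F(T_x\Sigma)\subset\calD_y$, choose a local trivialisation of $TM/\calD$ near $y$ for which the curvature $\Omega$ is represented by the normalised pair $(\theta^1,\theta^2)$, extend the Lagrangian isomorphism $F:T_x\Sigma\to V$ to a symplectic isomorphism $(\xi_{std}|_{(x,0)},\Omega_{\xi_{std}})\to(W,\theta^1|_W)$, and send a complement of $\xi_{std}$ in $T_{(x,0)}\tilde\Sigma$ to a vector of $T_yM$ whose class in $TM/\calD|_y$ is the first basis vector of that trivialisation. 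The resulting monomorphism $\hat F$ carries $\xi_{std}$ onto $W\subset\calD_y$, is $\Omega$-regular by \autoref{prop:fatTupleIsoContact}(1), and satisfies the curvature condition $\hat F^*\Omega|_{\xi_{std}}=(\Omega_{\xi_{std}},0)=\tilde{\hat F}\circ\Omega_{\xi_{std}}$; hence $\hat F\in\relIContTilde\subset\relContTilde$ with $ev(\hat F)=F$. Performing this construction continuously over a contractible $O$ (the spaces of auxiliary choices -- trivialisations, complements, symplectic framings -- being contractible) and patching over a cover of $M$ by charts on which $TM/\calD$ is trivial then yields surjectivity of $ev$ on sections, completing the microextension hypothesis and hence the proof modulo the lemma.

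To prove the lemma I would exploit the degree $2$ hypothesis to make $D$ complex symplectic. By \autoref{prop:fatEigenValue} the connecting automorphism $A$ has no real eigenvalue, and degree $2$ means $A^2=\mu A+\lambda I$ with $\mu^2+4\lambda<0$; replacing $\omega^2$ by a suitable element of $\langle\omega^1,\omega^2\rangle$ one normalises to $A=J$ with $J^2=-I$, and then $\omega:=\omega^1+i\omega^2$ becomes a $\bbC$-bilinear (holomorphic) symplectic form on the complex vector space $(D,J)$ -- precisely the structure underlying a holomorphic contact distribution, cf.\ \autoref{exmp:holomorphicContactDeg2}. In this language $V$ is $\Omega$-isotropic iff $\omega|_V=0$ and $\Omega$-regular iff $V\cap JV=0$, i.e.\ $V$ is totally real; hence $V_\bbC:=V\oplus JV$ is a complex-isotropic subspace of complex dimension $n$. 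Since $\dim_\bbC D\ge 2n$, I would pick a complex-isotropic complement $V_\bbC'$ with $V_\bbC\oplus V_\bbC'=:D_0$ complex symplectic of complex dimension $2n$, so that $\omega$ identifies $V_\bbC'$ with $V_\bbC^*$; transporting the real structure of $V_\bbC$ (with real points $V$) to $V_\bbC^*=V_\bbC'$ and letting $V'$ be its real points, I set $W:=V\oplus V'$. Then $\omega$ vanishes on $V\times V$ and on $V'\times V'$, while on $V\times V'$ it is the complex duality pairing, which is real-valued there; so $\omega^2|_W=\operatorname{Im}\omega|_W=0$, and $\omega^1|_W=\operatorname{Re}\omega|_W$ is the hyperbolic form associated with the real duality pairing of $V$ and $V'$, hence nondegenerate. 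This proves the lemma (with $\theta^i$ the normalised forms), the bound $\dim D\ge 4n$ being sharp here, consistently with \autoref{rmk:horizontalNecessaryDimension}; moreover the construction depends only on contractible choices, as required for the bundle version above.

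The main obstacle is exactly this lemma, i.e.\ the passage through the complex structure. The naive routes do not reach the optimal bound: extending along $(\Sigma\times\bbR,T\Sigma\times\bbR)$ would force $V$ to be enlarged inside $(V^\Omega)^\Omega=V+AV$, which in a degree $2$ tuple is $\Omega$-isotropic rather than $\omega^1$-symplectic, leaving no room unless $\rk\calD>4\dim\Sigma$ (that route gives only $\rk\calD\ge 6\dim\Sigma$), and a blunt appeal to \autoref{thm:hPrinIsocontactDeg2} on $J^1(\Sigma,\bbR)$ yields only $\rk\calD\ge 4\dim\Sigma+4$. It is precisely the enlargement of a totally real complex-isotropic $n$-plane to a holomorphic Lagrangian direct summand, followed by the reduction to its real points, that makes the extension possible and pins down the sharp hypothesis $\rk\calD\ge 4\dim\Sigma$; apart from this input, the argument parallels the one already carried out for \autoref{thm:hPrinIsocontactDeg2}.
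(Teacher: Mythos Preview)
Your proposal is correct and follows the same global architecture as the paper: embed $\Sigma$ as the zero section of $(\tilde\Sigma,\tilde K)=(J^1(\Sigma,\bbR),\xi_{std})$, invoke \autoref{thm:hPrinGenExtnContact}, and reduce the microextension hypothesis to a pointwise linear-algebraic statement, namely that an $\Omega$-regular, $\Omega$-isotropic $n$-plane $V\subset\calD_y$ can be enlarged to a $2n$-plane $W$ which is isotropic for one curvature component and symplectic for the other, with $V$ Lagrangian in $W$.

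The genuine difference is in how you produce $W$. The paper does not normalise the connecting automorphism; instead it fixes the given pair $(\omega^1,\omega^2)$, chooses an auxiliary almost complex structure $J$ compatible with $\omega^2$, and sets $V'=V^{\perp_2}\cap J\big((V^\Omega)^\Omega\big)$. Using \autoref{prop:fatTupleVAV} (which already encodes the degree~$2$ hypothesis) one checks that $V^{\perp_2}=V^\Omega\oplus V'$, that $S=V\oplus V'$ is $\omega^2$-isotropic (because $(V^\Omega)^\Omega$ is $\Omega$-isotropic and $\omega^2$ is $J$-invariant), and that $\omega^1|_S$ is nondegenerate with $V$ Lagrangian. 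Your route instead exploits degree~$2$ to rescale $A$ to an honest complex structure, views $(D,\omega^1+i\omega^2)$ as complex symplectic, extends the totally real complex-isotropic $V_\bbC=V\oplus JV$ to a complex-symplectic $V_\bbC\oplus V'_\bbC$, and takes the real points of $V'_\bbC$ under the transported conjugation. Both arguments hit the optimal bound $\dim D\ge 4n$ and depend only on contractible choices. Your approach is more conceptual---it makes transparent why degree~$2$ fat tuples behave like holomorphic symplectic spaces and why the bound is exactly the complex-Lagrangian dimension count---at the cost of an extra normalisation step (which you must, and do, perform smoothly in $y$ using that the coefficients of the minimal polynomial vary smoothly). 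The paper's approach is shorter and keeps the trivialisation of $TM/\calD$ fixed, with the degree~$2$ input hidden inside \autoref{prop:fatTupleVAV}.
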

\begin{proof}
	Given $\Sigma$, we consider the manifold $\tilde \Sigma = J^1(\Sigma, \mathbb{R})$, endowed with its canonical contact structure $K$. Note that $\dim\tilde\Sigma = 2\dim\Sigma +1$ and $\Sigma$ is canonically embedded as a Legendrian submanifold of $\tilde \Sigma$. Hence, any $K$-isocontact immersion $\tilde \Sigma \to M$ restricts to a horizontal immersion $\Sigma \to M$. We consider the relation $\relIContTilde \subset J^1(\tilde \Sigma, M)$ consisting of formal maps $T\tilde \Sigma \to TM$ inducing $K$ and satisfying the curvature condition, which are $\Omega$-regular by \autoref{prop:isocontactIsOmegaRegular}. Given \autoref{thm:hPrinGenExtnContact}, it is enough to show that the map $ev : \relIContTilde|_O \to \relHor|_O$ is surjective for contractible open sets $O \subset \Sigma$.
	
	Fix some contractible chart $O\subset \Sigma$ along with coordinates $\{x^i\}$. Then, we have a canonical choice of coordinates $\{x^i, p_i, z\}$ on $\tilde O = J^1(O,\mathbb{R}) \subset \tilde \Sigma$ so that the contact structure is given as \[K|_{\tilde O} = \ker\big(\theta := dz - p_i dx^i\big) = \Span \big\langle \partial_{p_i}, \; \partial_{x^i} + p_i \partial_z \big\rangle.\]
	Next, fix a coordinate chart $U\subset M$ and suppose $(F,u):TO \to TU$ is a bundle map representing a section of $\relHor$, with $u = \bs F$. Choose some trivialization $TM/\mathcal{D}|_U = \Span\langle e_1, e_2\rangle$ and write $\lambda : TM \to TM/\mathcal{D}$ as $\lambda = \lambda^1 \otimes e_1 + \lambda^2 \otimes e_2$. Denote $V = \im F\subset \mathcal{D}$. Since $F$ is $\Omega$-regular, the codimension of $V^\Omega = V^{\perp_1} \cap V^{\perp_2}$ in $V^{\perp_2}$ equals $\dim V$. Hence, for any subspace $V^\prime\subset V^{\perp_2}$ which is a complement to $V^\Omega$, we see that $\big(S := V\oplus V^\prime, d\lambda^1|_{S}\big)$ is a symplectic bundle. Our goal is to get a complement $V^\prime$ such that $S = V\oplus V^\prime$ is $\omega^2 = d\lambda^2|_\mathcal{D}$-isotropic.
	
	First, we get an almost complex structure $J : \mathcal{D} \to \mathcal{D}$ so that \[g : (u,v) \mapsto \omega^2(u,Jv),\quad u,v\in\mathcal{D}\] is a nondegenerate symmetric tensor. Such a compatible $J$ always exists and then $\omega^2$ is $J$-invariant. Since $\omega^2(V^\Omega, {V^\Omega}^\Omega) = 0$, we have $\mathcal{D} = V^\Omega \oplus_g J\big({V^\Omega}^\Omega\big)$  by \autoref{prop:fatTupleVAV}. Take $V^{\prime} = V^{\perp_2} \cap J\big({V^\Omega}^\Omega\big)$. A dimension counting argument then gives us $V^{\perp_2} = V^\Omega \oplus V^{\prime}$. Since both $V$ and $V^\prime$ are $\omega^2$-isotropic and also $\omega^2(V, V^\prime) = 0$, we have $S := V \oplus V^\prime$ is $\omega^2$-isotropic.
	
	Now, $V\subset S$ is $d\lambda^1$-Lagrangian. Consider the frame $V = \Span \langle X_i := F(\partial_{x^i}) \rangle$ and extend it to a symplectic frame $\langle X_i, Y_i \rangle$ of $(S, d\lambda^1|_S)$ so that the following holds: \[d\lambda^1(X_i, X_j) = 0 = d\lambda^1(Y_i, Y_j), \quad d\lambda^1(X_i, Y_j) = \delta_{ij}.\]
	Define the extension map $\tilde F : T\tilde O \to TM$ as follows: \[\tilde F(\partial_{x^i} + p_i \partial_z) = F(\partial_{x^i}) = X_i, \quad \tilde F(\partial_{p_i}) = Y_i, \quad F(\partial_z) = e_1.\]
	Clearly, $\tilde F$ induces $K$ from $\mathcal{D}$ and satisfies the curvature condition \[\tilde F^*d\lambda^1|_K = d\theta|_K, \quad \tilde F^*d\lambda^2|_K = 0.\]
	But then by \autoref{prop:isocontactIsOmegaRegular}, $\tilde F$ defines a section of $\relIContTilde$ over $\tilde O$. Thus, $ev : \Gamma \relIContTilde|_\Sigma \to \Gamma \relHor$ satisfies the local extension property. The $h$-principle now follows from \autoref{thm:hPrinGenExtnHoriz}.
\end{proof}

\begin{remark}\label{rmk:horizontalOptimalRange}
	As noted in \autoref{rmk:horizontalNecessaryDimension}, we necessarily need $\rk \mathcal{D} \ge 4 \dim \Sigma$ for the existence of $\Omega$-regular $\mathcal{D}$-horizontal immersions $\Sigma \to M$. Thus, the above $h$-principle is in the optimal range.
\end{remark}

\subsubsection{Existence of Regular Horizontal Immersions}
\begin{theorem}\label{thm:existenceHorizImm}
	Suppose $\mathcal{D}\subset TM$ is a degree $2$ fat distribution. Then any $u:\Sigma\to M$ can be $C^0$-approximated by an $\Omega$-regular, $\mathcal{D}$-horizontal map provided $\rk\mathcal{D} \ge \max\big\{ 4\dim\Sigma,\; 5\dim\Sigma - 3 \big\}$.
\end{theorem}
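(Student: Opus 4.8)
The plan is to reduce the statement to the construction of a formal solution and then invoke the $h$-principle of \autoref{thm:hPrinHorizImmFatDeg2}; the argument runs parallel to the proof of \autoref{thm:existenceIsocontact}. Since $\rk\calD\ge 4\dim\Sigma$, the relation $\relHor$ satisfies the $C^0$-dense $h$-principle, so it suffices to exhibit, for the given map $u:\Sigma\to M$, a section of $\relHor$ covering $u$, equivalently, a bundle monomorphism $F:T\Sigma\to u^*\calD$ that is fiberwise $\Omega$-regular and $\Omega$-isotropic. (For $K=T\Sigma$ the curvature condition in \autoref{defn:relCont} reduces to $F^*\Omega=0$, because $\Omega_{T\Sigma}=0$.) In contrast with the contact case, $\Omega$-regularity is not automatic here and must be built into $F$ from the start. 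Once such an $F$ is produced, the $C^0$-dense $h$-principle homotopes it to a genuine $\Omega$-regular $\calD$-horizontal immersion whose base map is $C^0$-close to $u$.

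First I would introduce the fiber bundle $\calF\subset\hom(T\Sigma,u^*\calD)$ with fiber
\[
\calF_x=\big\{\,F:T_x\Sigma\to\calD_{u(x)}\;\big|\;F\text{ is injective and }\im F\text{ is }\Omega\text{-regular and }\Omega\text{-isotropic}\,\big\}
\]
over $x\in\Sigma$. Writing $n=\dim\Sigma$ and $d=\rk\calD$ and fixing a basis of $T_x\Sigma$, the fiber $\calF_x$ is identified with the model space
\[
P(n)=\big\{\,(v_1,\dots,v_n)\in V_n(D)\;\big|\;\langle v_1,\dots,v_n\rangle\text{ is }\Omega\text{-regular and }\Omega\text{-isotropic}\,\big\},
\]
where $(D,\omega^1,\omega^2)$ is a degree $2$ fat tuple with $\dim D=d$; local triviality of $\calF$ follows from the continuous dependence of $(\omega^1,\omega^2)$ on the base point, just as in \autoref{thm:existenceIsocontact}. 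A global section of $\calF$ is exactly the formal solution we seek, and since $\Sigma$ has the homotopy type of an $n$-dimensional CW complex, such a section exists provided $P(n)$ is $(n-1)$-connected. The heart of the proof is therefore the claim that $P(n)$ is $(d-4n+2)$-connected; granting it, the hypothesis $\rk\calD\ge 5\dim\Sigma-3$ gives $d-4n+2\ge n-1$ and the section exists.

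To prove the connectivity claim I would induct on $n$, following \autoref{lemma:connectivityOfR(k)IsocontactDeg2Fat}. For $n=1$, fatness makes $\langle v\rangle$ automatically $\Omega$-regular and $\Omega$-isotropy is vacuous, so $P(1)=D\setminus\{0\}\simeq S^{d-1}$ is $(d-2)$-connected. For the inductive step, let $p:P(m)\to P(m-1)$ forget the last vector; over $b=(v_1,\dots,v_{m-1})\in P(m-1)$ with span $V$ the fiber is
\[
p^{-1}(b)=\big\{\,v_m\in D\;\big|\;(v_1,\dots,v_m)\in P(m)\,\big\}.
\]
Using \autoref{prop:fatTupleExtension}, \autoref{prop:fatTupleVAV} and the $A$-invariance of $V+AV$ in degree $2$, one verifies that $V+\langle v_m\rangle$ is $\Omega$-isotropic iff $v_m\in V^\Omega$, and $\Omega$-regular iff $v_m\notin(V^\Omega)^\Omega=V+AV$. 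Since $V$ is $\Omega$-isotropic, a short computation with the relation $A^2\in\langle I,A\rangle$ and the defining property of $A$ shows $V+AV$ is $\omega^1$-isotropic, hence $V+AV\subset(V+AV)^{\perp_1}=V^\Omega$; moreover $\dim V^\Omega=d-2(m-1)$ and $\dim(V+AV)=2(m-1)$ because $V$ is $\Omega$-regular (\autoref{prop:fatTupleOmegaRegular}). Thus $p^{-1}(b)\cong V^\Omega\setminus(V+AV)$ deformation retracts onto a sphere of dimension $d-4(m-1)-1$ and is $(d-4m+2)$-connected. The homotopy long exact sequence of $p$, together with the inductive hypothesis, then yields that $P(m)$ is $(d-4m+2)$-connected, completing the induction.

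The hard part, as I anticipate, is the linear algebra at the inductive step: identifying $p^{-1}(b)$ with $V^\Omega\setminus(V+AV)$ and checking the nesting $V+AV\subset V^\Omega$; this is precisely where the degree $2$ hypothesis is indispensable, through \autoref{prop:fatTupleVAV} and \autoref{prop:fatTupleExtension}. Everything downstream is routine bookkeeping with connectivity estimates. No cohomological hypothesis on $\Sigma$ is required here (unlike in \autoref{thm:existenceIsocontact}), since the quotient bundle $T\Sigma/T\Sigma$ is trivial so there is no analogue of the auxiliary monomorphism $G$; the two bounds $4\dim\Sigma$ and $5\dim\Sigma-3$ are exactly the thresholds needed for the $h$-principle and for the existence of the formal solution respectively, while the $C^0$-approximation statement comes for free from the $C^0$-density of the $h$-principle.
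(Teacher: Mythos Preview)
Your proposal is correct and follows essentially the same approach as the paper: both set up the fiber bundle $\calF$ of $\Omega$-regular, $\Omega$-isotropic monomorphisms, identify the fiber with the frame space you call $P(n)$ (the paper's $R(k)$), prove it is $(d-4n+2)$-connected by the same induction on the forgetful map, and then invoke \autoref{thm:hPrinHorizImmFatDeg2}. Your identification of the fiber of $p$ as $V^\Omega\setminus(V+AV)$ is the paper's $V^\Omega\setminus(V^\Omega)^\Omega$ via \autoref{prop:fatTupleVAV}, and your direct verification that $V+AV$ is $\omega^1$-isotropic is equivalent to the paper's appeal to \autoref{prop:fatTupleVAV}(\ref{prop:fatTupleVAV:4}).
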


To prove the above existence theorem, it is enough to obtain a formal $\Omega$-regular, $\mathcal{D}$-horizontal immersion, covering a given smooth map $u:\Sigma\to M$. Consider the subbundle $\mathcal{F}\subset\hom(T\Sigma, u^*TM)$, where the fibers are given by
$$\mathcal{F}_x =\Big\{ F:T_x\Sigma\to \mathcal{D}_{u(x)} \quad \Big|\quad \text{$F$ is injective, $\Omega$-regular and $\Omega$-isotropic} \Big\}, \quad x\in\Sigma.$$
We need to show that $\mathcal{F}$ has a global section. Suppose $(D,\omega^1,\omega^2)$ is a degree $2$ fat tuple with $\dim D = d$ and let $V_k(D)$ denote the space of $k$-frames in $D$. Note that the fibers $\mathcal{F}_x$ can be identified with the subset $R(k)$ of $V_k(D)$ defined by
$$R(k) = \Big\{(v_1,\ldots,v_k) \in V_k(D)\;\Big|\; \text{the span $\langle v_1,\ldots,v_k\rangle$ is $\Omega$-regular and $\Omega$-isotropic}\Big\}.$$

\begin{lemma} \label{lemma:connectivityOfHorizR(k)}
	The space $R(k)$ is $d-4k+2$-connected.
\end{lemma}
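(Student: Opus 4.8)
The plan is to run the same induction on $k$ as in the proof of \autoref{lemma:connectivityOfR(k)IsocontactDeg2Fat}, using the map $p\colon R(k)\to R(k-1)$ that forgets the last vector of a frame. For the base case $k=1$ one observes that $V_1(D)=D\setminus\{0\}$, that every nonzero vector spans an $\Omega$-regular line by fatness, and that every line is $\Omega$-isotropic since $\omega^1,\omega^2$ are skew-symmetric; hence $R(1)=D\setminus\{0\}\simeq S^{d-1}$ is $(d-2)$-connected, and $d-2=d-4\cdot 1+2$.

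For the inductive step I would first check that $p$ maps $R(k)$ into $R(k-1)$: if $V=\langle v_1,\dots,v_k\rangle$ is $\Omega$-regular and $\Omega$-isotropic, then $V'=\langle v_1,\dots,v_{k-1}\rangle$ is clearly $\Omega$-isotropic and satisfies $V'\cap AV'\subset V\cap AV=0$, hence is $\Omega$-regular by \autoref{prop:fatTupleOmegaRegular}. The heart of the argument is the identification of the fibre over a fixed $b=(v_1,\dots,v_{k-1})\in R(k-1)$, with $V'=\langle b\rangle$; I claim $p^{-1}(b)\cap R(k)=(V')^\Omega\setminus(V'+AV')$. For one inclusion, note that $V'+\langle w\rangle$ is $\Omega$-isotropic precisely when $w\in(V')^\Omega$, and that when moreover $w\notin(V'^\Omega)^\Omega=V'+AV'$ the span $V'+\langle w\rangle$ is $\Omega$-regular by \autoref{prop:fatTupleExtension} (here $w\notin V'+AV'\supset V'$ also guarantees linear independence of $v_1,\dots,v_{k-1},w$). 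For the reverse inclusion, if $w\in V'+AV'$ then, since $A$ has a degree $2$ minimal polynomial, the subspace $V'+AV'$ is $A$-invariant, so $\langle w,Aw\rangle\subset V'+AV'$ and therefore $(V'+\langle w\rangle)+A(V'+\langle w\rangle)$ has dimension at most $\dim(V'+AV')=2\dim V'<2(\dim V'+1)$, which by \autoref{prop:fatTupleOmegaRegular} rules out $\Omega$-regularity.

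With the fibre identified, the connectivity count is routine. Since $V'$ is $\Omega$-isotropic, $V'+AV'=(V'^\Omega)^\Omega$ is again $\Omega$-isotropic by \autoref{prop:fatTupleVAV}, hence it sits inside $(V'+AV')^\Omega=(V')^\Omega$; and \autoref{prop:fatTupleOmegaRegular} gives $\dim(V')^\Omega=d-2(k-1)$ and $\dim(V'+AV')=2(k-1)$. Thus the fibre is the complement of a linear subspace of codimension $d-4k+4$ in a vector space, so it is homotopy equivalent to $S^{d-4k+3}$ and in particular $(d-4k+2)$-connected. Because these fibres vary continuously — the subspaces $(V'_b)^\Omega$ form a vector bundle over $R(k-1)$ with the $V'_b+AV'_b$ forming a subbundle — the map $p$ is a fibre bundle, and feeding the connectivity of the fibre and the induction hypothesis ($R(k-1)$ is $(d-4k+6)$-connected, so $\pi_i(R(k-1))=0$ for $i\le d-4k+2$) into the long exact homotopy sequence of $p$ yields $\pi_i(R(k))=0$ for $i\le d-4k+2$, closing the induction. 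The step I expect to require the most care is the precise identification of the fibre of $p$ — in particular, making sure the degree-$2$ hypothesis is invoked correctly to obtain both the $A$-invariance of $V'+AV'$ and the inclusion $V'+AV'\subset(V')^\Omega$ — after which the homotopy-theoretic bookkeeping is entirely standard.
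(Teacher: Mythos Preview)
Your proposal is correct and follows essentially the same approach as the paper: induction on $k$ via the forgetful map $p\colon R(k)\to R(k-1)$, identification of the fibre over $b$ with span $V'$ as $(V')^\Omega\setminus(V'^\Omega)^\Omega$ (which equals your $(V')^\Omega\setminus(V'+AV')$ by \autoref{prop:fatTupleVAV}), and the long exact sequence. If anything, your treatment of the reverse inclusion in the fibre identification is more explicit than the paper's, which simply asserts the ``if and only if'' by reference to \autoref{prop:fatTupleExtension}.
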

\begin{proof}
	The proof is by induction over $k$. For $k=1$, we have $$R(1) = \big\{v \in D \;\big|\; \text{$v\ne 0$ and $\langle v\rangle$ is $\Omega$-regular, $\Omega$-isotropic}\big\}.$$
	Since $(D,\omega^1,\omega^2)$ is a fat tuple, \emph{every} $1$-dimensional subspace of $D$ is $\Omega$-regular as well as $\Omega$-isotropic. Thus, $R(1) \equiv D\setminus \{0\} \simeq S^{d - 1}$ and hence, $R(1)$ is $d-2$-connected. Note that, $d - 2 = d - 4.1 + 2$.
	
	Let $k\ge 2$ and assume that $R(k-1)$ is $d - 4(k-1) + 2 = d - 4k + 6$-connected. Observe that the projection map $p:V_{k}(D)\to V_{k-1}(D)$ given by $p(v_1,\ldots,v_{k})=(v_1,\ldots,v_{k-1})$ maps $R(k)$ into $R(k-1)$. To identify the fibers of $p:R(k)\to R(k-1)$,  let $b=(v_1,\ldots,v_{k-1})\in R(k-1)$ so that $V=\langle v_1,\ldots,v_{k-1}\rangle$ is $\Omega$-regular and $\Omega$-isotropic. Clearly, ${V^\Omega}^\Omega \subset V^\Omega$ since $V$ is $\Omega$-isotropic. Also, it follows from \autoref{prop:fatTupleExtension} that a tuples $(v_1,\ldots,v_{k-1},\tau) \in R(k)$ if and only if $\tau \in V^\Omega \setminus {V^\Omega}^{\Omega}$. Note that, $\dim {V^\Omega}^\Omega = 2\dim V = 2(k-1)$ and $$\codim V^\Omega = 2\dim V \;\Rightarrow\; \dim V^\Omega = d - 2(k-1).$$
	We have thus identified the fiber of $p$ over $b$: $$F(k) := p^{-1}(b) \equiv V^\Omega \setminus {V^\Omega}^\Omega \equiv \mathbb{R}^{d-2k + 2} \setminus \mathbb{R}^{2(k-1)},$$
	which is $d - 4k+ 2$-connected.	Next, consider the fibration long exact sequence associated to $p : R(k)\to R(k-1)$,
	$$\cdots\rightarrow\pi_i(F(k)) \rightarrow \pi_i(R(k)) \rightarrow \pi_i (R(k-1)) \rightarrow \pi_{i-1}(F(k))\rightarrow\cdots$$
	Since $\pi_i(F(k)) = 0$ for $i\le d - 4k + 2$, we get the following isomorphisms: $$\pi_i(R(k)) \cong \pi_i(R(k-1)), \quad\text{for $i\le d-4k+2$.}$$
	But from the induction hypothesis, $\pi_i(R(k-1)) = 0$ for $i\le d - 4k + 6$. Hence, $\pi_i(R(k)) = 0$ for $i\le d - 4k + 2$. This concludes the induction step and hence the lemma is proved.
\end{proof}

\begin{remark}\label{rmk:existenceGermHorizDeg2}
	It is clear from the above proof that $R(k) \ne \emptyset$ 	if $d \ge 4k$. Consequently, from the local $h$-principle for $\relHor$, we can conclude the existence of \emph{germs} of $k$-submanifolds horizontal to a degree $2$ fat distribution $\mathcal{D}$ provided $\rk \mathcal{D} \ge 4 \dim\Sigma$.
\end{remark}

\begin{proof}[Proof of \autoref{thm:existenceHorizImm}]
	Since $\rk \mathcal{D} \ge 5\dim\Sigma - 3$, we have a global section $F$ of $\mathcal{F}$ by \autoref{lemma:connectivityOfHorizR(k)}. Since $\rk \mathcal{D} \ge 4\dim \Sigma$ as well, \autoref{thm:hPrinHorizImmFatDeg2} implies the existence of a horizontal immersion $\Sigma \to M$.
\end{proof}

\autoref{thm:mainTheoremHorizDeg2} (stated in the introduction) now follows from \autoref{thm:hPrinHorizImmFatDeg2} and \autoref{thm:existenceHorizImm}.

\begin{corr}\label{corr:hPrinFat46}
	Given a corank-$2$ fat distribution $\mathcal{D}$ on a $6$-dimensional manifold $M$, any map $S^1 \to M$ can be homotoped to a $\mathcal{D}$-horizontal immersion.
\end{corr}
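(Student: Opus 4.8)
The plan is to obtain this as an immediate specialization of \autoref{thm:mainTheoremHorizDeg2}. First I would record the structural fact that $\calD$ is automatically a \emph{degree $2$} fat distribution: since $\dim M = 6$ and $\cork \calD = 2$ we have $\rk \calD = 4$, so $\calD$ is of type $(4,6)$, and by \autoref{obs:fatDegree}\,(\ref{obs:fatDegree:3}) every fat distribution of this type has degree $2$. This is the one point that makes the degree-$2$ theory of \autoref{sec:application} applicable with no extra assumptions.

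Next I would verify the dimension inequality of \autoref{thm:existenceHorizImm} for $\Sigma = S^1$. Here $\dim\Sigma = 1$, so
\[
\max\{4\dim\Sigma,\; 5\dim\Sigma - 3\} \;=\; \max\{4,\,2\} \;=\; 4 \;=\; \rk\calD,
\]
and the hypothesis $\rk\calD \ge \max\{4\dim\Sigma,\,5\dim\Sigma-3\}$ holds, with equality. On the formal side this amounts to the existence of a section of the bundle $\calF\subset\hom(TS^1,u^*TM)$ of \autoref{sec:hPrinIntoFat:horiz} whose fibers are the spaces $R(1)\equiv\calD_y\setminus\{0\}\simeq S^{3}$; these are $2$-connected by \autoref{lemma:connectivityOfHorizR(k)}, hence sections over the one-dimensional complex $S^1$ exist, and \autoref{thm:hPrinHorizImmFatDeg2} then turns such a formal solution into a genuine $\Omega$-regular $\calD$-horizontal immersion $C^0$-close to the given map. (In fact, for a $(4,6)$ fat distribution \emph{every} one-dimensional subspace of $\calD$ is $\Omega$-regular, so every $\calD$-horizontal immersion of $S^1$ is automatically $\Omega$-regular.)

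Finally I would pass from ``$C^0$-approximation'' to ``homotopy'': any map $S^1\to M$ sufficiently $C^0$-close to a given $u$ is homotopic to $u$, e.g.\ through the geodesic homotopy of an auxiliary Riemannian metric on $M$. Hence $u$ is homotopic to a $\calD$-horizontal immersion. I do not expect a genuine obstacle here, since the corollary is a direct specialization; the only thing worth noting is that $S^1$ is \emph{closed}, so one cannot appeal to the open-manifold $h$-principle, but \autoref{thm:hPrinHorizImmFatDeg2} and \autoref{thm:existenceHorizImm} are already established for an arbitrary domain, so no additional argument is needed.
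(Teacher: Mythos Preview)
Your proof is correct and follows essentially the same route as the paper: invoke \autoref{obs:fatDegree}\,(\ref{obs:fatDegree:3}) to see that a type $(4,6)$ fat distribution is automatically of degree~$2$, then apply \autoref{thm:existenceHorizImm} (equivalently the second clause of \autoref{thm:mainTheoremHorizDeg2}) with $\dim\Sigma=1$. The only superfluous step is your final paragraph: the $C^0$-dense $h$-principle underlying \autoref{thm:hPrinHorizImmFatDeg2} already furnishes a homotopy of base maps from $u$ to the horizontal immersion, so no auxiliary geodesic-homotopy argument is needed.
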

\begin{proof}
	As noted in \autoref{obs:fatDegree} (\ref{obs:fatDegree:3}), every corank-$2$ fat distribution is of degree $2$ and a formal horizontal map $S^1 \to M$ is $\Omega$-regular. The proof then follows directly from \autoref{thm:existenceHorizImm}.
\end{proof}

\begin{remark}\label{rmk:fat46}
	In \cite{bhowmickFat46}, the local $h$-principle for immersions $\mathbb{R}^2\to (M,\mathcal{D})$ horizontal to a special class of fat distribution of type $(4,6)$ was obtained. But the underlying relation for such maps need not be microflexible, and hence, it was not considered as a possible candidate for a local extension for horizontal immersions of $S^1$. The above corollary circumvents this issue by considering an extension to isocontact immersions, which is microflexible.
\end{remark}

If $\mathcal{D}$ is the underlying real distribution of a holomorphic contact structure $\Xi$ on a complex manifold $(M, J)$, where $J$ is the (integrable) almost complex structure, then because of \autoref{prop:fatTupleOmegaRegular} (\ref{prop:fatTupleOmegaRegular:2}), $\Omega$-regular immersions $\mathcal{D}$ are the same as totally real immersions. Hence, we get the following corollary to \autoref{thm:hPrinHorizImmFatDeg2}.

\begin{corr}
	Given a holomorphic contact structure $\Xi$ on $M$, there exists a totally real $\Xi$-horizontal immersion $\Sigma \to M$ provided $\rk_\mathbb{R} \Xi \ge \max\{4\dim\Sigma, 5\dim \Sigma - 3\}$.
\end{corr}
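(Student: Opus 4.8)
The plan is to reduce the statement to \autoref{thm:existenceHorizImm} via the dictionary between $\Omega$-regularity and total reality. First I would recall from \autoref{exmp:holomorphicContactDeg2} that the real distribution $\calD$ underlying $\Xi$ is a degree $2$ fat distribution on $M$ with $\rk\calD = \rk_\bbR\Xi$, so that the entire theory of \autoref{sec:hPrinIntoFat:horiz} applies verbatim with $\calD$ in place of an abstract degree $2$ fat distribution.

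Next I would pin down the connecting automorphism. Working with a local holomorphic Darboux coframe $\Theta = \lambda^1 + i\lambda^2$ as in \autoref{exmp:holomorphicContact}, the curvature forms $\omega^i = d\lambda^i|_\calD$ satisfy $\omega^1(u, Jv) = -\omega^2(u,v)$ for $u,v \in \calD$; comparing with the defining relation $\omega^1(u, Av) = \omega^2(u,v)$ of the connecting automorphism shows $A = -J|_\calD$, and hence $AV = JV$ for every subspace $V\subset\calD$. By \autoref{prop:fatTupleOmegaRegular} (\ref{prop:fatTupleOmegaRegular:2}), a subspace $V\subset\calD_y$ is $\Omega$-regular precisely when $V\cap AV = V\cap JV = \{0\}$; since $\calD$ is a $J$-complex subbundle of $TM$, this is exactly the condition that $V$ be a totally real subspace of $(T_yM, J)$. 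Therefore a $\calD$-horizontal immersion $u:\Sigma\to M$ is $\Omega$-regular if and only if it is a totally real immersion into the complex manifold $(M,J)$.

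Finally, since $\rk_\bbR\Xi = \rk\calD \ge \max\{4\dim\Sigma,\, 5\dim\Sigma - 3\}$, \autoref{thm:existenceHorizImm} produces an $\Omega$-regular $\calD$-horizontal immersion $\Sigma\to M$ (indeed one $C^0$-approximating any prescribed smooth map), and by the equivalence established in the previous step this immersion is at once totally real and $\Xi$-horizontal, which is the assertion of the corollary.

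There is essentially no real obstacle here: the analytic and sheaf-theoretic content is already carried in \autoref{thm:hPrinHorizImmFatDeg2}, together with the connectivity estimate \autoref{lemma:connectivityOfHorizR(k)} that furnishes the formal solution. The only point needing a moment's care is verifying that the connecting automorphism $A$ is, up to sign, the ambient complex structure $J$ — equivalently that $J$ preserves $\calD$ and relates $\omega^1$ to $\omega^2$ — which is immediate from the holomorphicity of $\Theta$ and is already recorded in \autoref{exmp:holomorphicContactDeg2}.
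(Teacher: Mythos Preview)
Your proposal is correct and follows essentially the same approach as the paper: the paper's proof is the single sentence preceding the corollary, which invokes \autoref{exmp:holomorphicContactDeg2} and \autoref{prop:fatTupleOmegaRegular}~(\ref{prop:fatTupleOmegaRegular:2}) to identify $\Omega$-regular horizontal immersions with totally real ones, and then appeals to the existence result \autoref{thm:existenceHorizImm} (the paper's reference to \autoref{thm:hPrinHorizImmFatDeg2} is a slight imprecision, since the dimension bound and existence conclusion are those of \autoref{thm:existenceHorizImm}, as you correctly note). Your explicit identification $A = -J|_\calD$ simply spells out what the paper leaves implicit.
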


\subsection{Horizontal Immersions into Quaternionic Contact Manifolds}\label{sec:hPrinIntoFat:quaternion}
We recall the following observation by Pansu.
\begin{prop}\cite{pansuCarnotManifold} \label{prop:qContOmegaRegular}
	If $\mathcal{D}$ is a quaternionic contact structure, then any $\Omega$-isotropic subspace of $\mathcal{D}_x$ is $\Omega$-regular. Hence, every horizontal immersion is $\Omega$-regular.
\end{prop}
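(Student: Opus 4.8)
The plan is to reduce the statement to a pointwise linear-algebraic fact about quaternionic structures and then read off the consequence for horizontal immersions. Fix $y\in M$ and work on a neighbourhood where $\calD=\bigcap_{i=1}^3\ker\lambda^i$ carries a metric $g$ and a quaternionic structure $(J_1,J_2,J_3)$ with $\omega^i:=d\lambda^i|_\calD=g(J_i\,\cdot\,,\cdot)$. After a local trivialisation $TM/\calD|_y\cong\bbR^3$, a subspace $V\subset\calD_y$ is $\Omega$-regular exactly when the map $\Phi:\calD_y\to\hom(V,\bbR^3)$, $\xi\mapsto(\iota_\xi\omega^i|_V)_{i=1}^3$, is onto (\autoref{defn:contOmegaRegular}), and $\Omega$-isotropic exactly when $g(J_iv,v')=0$ for all $v,v'\in V$ and $i=1,2,3$. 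Since $g$ is nondegenerate, I would identify $\iota_\xi\omega^i|_V$ with the vector $P_V(J_i\xi)\in V$, where $P_V:\calD_y\to V$ is the $g$-orthogonal projection; thus $\Phi$ is surjective iff every triple $(w_1,w_2,w_3)\in V^3$ arises as $(P_VJ_1\xi,\,P_VJ_2\xi,\,P_VJ_3\xi)$ for some $\xi\in\calD_y$.

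The key structural observation, which I would establish first, is that $\Omega$-isotropy of $V$ forces the four subspaces $V,J_1V,J_2V,J_3V$ to be mutually $g$-orthogonal. Here one uses that each $J_i$ is $g$-skew-adjoint (equivalent to the skew-symmetry of $\omega^i$) and hence, together with $J_i^2=-1$, a $g$-isometry. Isotropy gives $V\perp J_iV$ directly; for $i\ne j$, writing $J_iJ_j=\pm J_k$ with $k$ the remaining index, skew-adjointness gives $g(J_iv,J_jv')=-g(v,J_iJ_jv')=\mp g(v,J_kv')=0$, again by isotropy (and skew-adjointness of $J_k$).

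Granting this, surjectivity of $\Phi$ is immediate. Given $(w_1,w_2,w_3)\in V^3$, put $\xi=-\sum_{j=1}^3 J_jw_j$, which lies in $J_1V\oplus J_2V\oplus J_3V\subset\calD_y$. For each $i$ we have $J_i\xi=-\sum_j J_iJ_jw_j$; the $j=i$ term equals $-J_i^2w_i=w_i\in V$, while for $j\ne i$ the term $-J_iJ_jw_j=\mp J_kw_j$ lies in $J_kV\subset V^{\perp_g}$. Hence $P_VJ_i\xi=w_i$ for all $i$, so $\Phi$ is onto and $V$ is $\Omega$-regular. Finally, for a $\calD$-horizontal immersion $u:\Sigma\to M$ the isotropy identity $u^*\Omega=0$ (the $K=T\Sigma$ case of \autoref{prop:isocontactCurvatureCondition}) says precisely that $V=du_x(T_x\Sigma)\subset\calD_{u(x)}$ is $\Omega$-isotropic for every $x$; by the above it is $\Omega$-regular, so $u$ is an $\Omega$-regular immersion.

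The only point demanding care is the bookkeeping: keeping the signs in $J_iJ_j=\pm J_k$ consistent with the $g$-dualisation $\iota_\xi\omega^i|_V\leftrightarrow P_V(J_i\xi)$ throughout. No genuine obstacle arises beyond this; in fact the same computation shows $\dim V\le\tfrac14\rk\calD$ for any $\Omega$-isotropic $V$, since $V,J_1V,J_2V,J_3V$ are mutually orthogonal of equal dimension.
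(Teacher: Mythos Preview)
Your argument is correct. The paper does not supply its own proof of this proposition---it simply attributes the result to Pansu and moves on---so there is nothing to compare at the level of the proposition itself. That said, the quaternionic linear algebra you carry out is exactly what the paper later unpacks in the proof of \autoref{thm:hPrinIsocontactQCont}: there one finds the orthogonal decomposition $\calD = W^\Omega \oplus_g \big(\sum_i J_i W\big)$ and the criterion that $W$ is $\Omega$-regular iff $\sum_i J_i W$ is a direct sum. Your observation that $\Omega$-isotropy of $V$ forces $V, J_1V, J_2V, J_3V$ to be mutually $g$-orthogonal immediately gives that direct sum, and your explicit preimage $\xi = -\sum_j J_j w_j$ is a clean way to exhibit surjectivity of $\Phi$ without invoking the decomposition abstractly. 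The sign bookkeeping you flag is indeed the only delicate point, and you have handled it correctly (skew-adjointness of each $J_i$ plus $J_iJ_j=\pm J_k$ is all that is needed).
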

Because of the above result, $\relHor$ has the following simpler description
$$\relHor = \Big\{(x,y,F)\;\Big|\; \text{$F$ is injective, $F(T_x\Sigma)\subset \mathcal{D}_y$, \; $F^*\Omega = 0$} \Big\}.$$
\begin{theorem}\label{thm:hPrinHorizImmQCont}
	Suppose $\mathcal{D}\subset TM$ is a quaternionic contact structure and $\Sigma$ is an arbitrary manifold. Then $\relHor\subset J^1(\Sigma,M)$ satisfies the $C^0$-dense $h$-principle, provided $\rk\mathcal{D} \ge 4\dim \Sigma + 4$. 
\end{theorem}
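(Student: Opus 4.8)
The plan is to follow the scheme of \autoref{thm:hPrinHorizImmFatDeg2} — embed $\Sigma$ in a manifold with a distribution and verify a microextension property — but with the coarser extension $\tilde\Sigma=\Sigma\times\bbR$, $\tilde K=T\tilde\Sigma$ in place of $J^1(\Sigma,\bbR)$ (i.e. the pair $(\Sigma\times\bbR,K\times\bbR)$ for $K=T\Sigma$, cf. \autoref{exmp:extensions} and \autoref{thm:hPrinGenExtn}). The $J^1$-extension is not useful here: a $\xi_{std}$-contact extension of a horizontal immersion of $\Sigma$ would need to be $\Omega$-regular with image in $\calD$ of dimension $2\dim\Sigma$, and the obvious candidate (spanned by $\im F$ and $J_1\im F$) fails to be $\Omega$-regular, while routing through $\Omega$-regular $\xi_{std}$-isocontact immersions of $J^1(\Sigma,\bbR)$ (\autoref{thm:mainTheoremIsocontactQCont}) would only give the much weaker bound $\rk\calD\ge 8\dim\Sigma+4$. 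So let $\relHorTilde\subset J^1(\tilde\Sigma,M)$ be the relation associated with $(\tilde\Sigma,\tilde K)$; by \autoref{prop:qContOmegaRegular} it is simply the relation of horizontal immersions $\tilde\Sigma\to M$, and a solution restricts under $ev$ to a $\calD$-horizontal immersion of $\Sigma$. By \autoref{thm:hPrinGenExtn} it is enough to show that $\relHorTilde$ is a microextension of $\relHor$, i.e. that $ev\colon\Gamma\relHorTilde|_O\to\Gamma\relHor|_O$ is surjective over every contractible open $O\subset\Sigma$.

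Fix such an $O$ and $F\in\Gamma\relHor|_O$ covering $u\colon O\to M$, so $F\colon TO\to u^*\calD$ is fibrewise injective with $F^*\Omega=0$; thus $V_\sigma:=F(T_\sigma\Sigma)$ is $\Omega$-isotropic in $\calD_{u(\sigma)}$ of dimension $n:=\dim\Sigma$, and $V_\sigma\subset V_\sigma^\Omega$, where $V_\sigma^\Omega=\{w\in\calD_{u(\sigma)}\mid \iota_w\Omega|_{V_\sigma}=0\}$. The only real input is linear-algebraic: by \autoref{prop:qContOmegaRegular} an $\Omega$-isotropic subspace is automatically $\Omega$-regular, so $\codim_{\calD_{u(\sigma)}}V_\sigma^\Omega=3n$. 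Since the rank of a quaternionic contact distribution is divisible by $4$, the hypothesis $\rk\calD\ge 4n+4$ is equivalent to $\rk\calD>4n$, whence $\dim V_\sigma^\Omega=\rk\calD-3n>n=\dim V_\sigma$. The union $\bigcup_{\sigma\in O}V_\sigma^\Omega$ is a smooth subbundle of $u^*\calD$ of rank $\rk\calD-3n\ge n+4$ (being the kernel of a constant-rank bundle map $u^*\calD\to\hom(T\Sigma,TM/\calD)$), and it properly contains $\im F$. Over the contractible $O$ pick a complement $C$ of $\im F$ inside it, of rank $\ge 4$, and a nowhere-vanishing section $\tau$ of $C$, and define $\hat F\colon T(O\times\bbR)\to TM$ covering $u\circ\pi$ (with $\pi\colon O\times\bbR\to O$ the projection) by $\hat F(v,t)=F(v)+t\,\tau$ for $v\in T\Sigma$, $t\in\bbR$. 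Then $\hat F$ is fibrewise injective, since $\tau$ is nowhere in $\im F$, and its image $\im F+\langle\tau\rangle$ is $\Omega$-isotropic, since $\tau$ pointwise lies in $(\im F)^\Omega$; hence $\hat F^*\Omega=0$ and $\hat F$ represents a section of $\relHorTilde|_O$ with $ev(\hat F)=F$. This gives the required surjectivity, and \autoref{thm:hPrinGenExtn} then yields the $C^0$-dense $h$-principle.

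The only point needing care is the dimension count $\dim V^\Omega>\dim V$ for $\Omega$-isotropic $V$, which rests entirely on Pansu's observation (\autoref{prop:qContOmegaRegular}) that such a $V$ is $\Omega$-regular — the underlying computation being the mutual $g$-orthogonality of $V,J_1V,J_2V,J_3V$ for an isotropic $V$. Everything else (smoothness of the subbundle $\bigcup V_\sigma^\Omega$, the choice of a nowhere-vanishing section over a contractible chart, fibrewise injectivity and isotropy of $\hat F$) is routine, and the $C^0$-density together with the parametric and relative refinements come for free from \autoref{thm:hPrinGenExtn} and the local $h$-principle for $\relHor$. Note that the bound $\rk\calD\ge 4\dim\Sigma+4$ is slightly larger than the condition $\rk\calD\ge 4\dim\Sigma$ that is necessary just for the existence of a horizontal immersion, the excess being the minimal slack (forced by divisibility of $\rk\calD$ by $4$) required to carry out the $\Sigma\times\bbR$ extension.
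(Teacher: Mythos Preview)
Your proof is correct and follows essentially the same approach as the paper: both use the extension $(\tilde\Sigma,\tilde K)=(\Sigma\times\bbR,T(\Sigma\times\bbR))$, invoke \autoref{prop:qContOmegaRegular} to obtain $\codim V^\Omega=3\dim V$, choose $\tau\in V^\Omega\setminus V$ to build the extended monomorphism $\hat F(v,t)=F(v)+t\tau$, and conclude via \autoref{thm:hPrinGenExtn}. Your additional remarks (the comparison with the $J^1(\Sigma,\bbR)$ extension and the divisibility-by-$4$ observation) are helpful commentary but not part of the paper's argument.
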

\begin{proof}
	It is enough to show that under the hypothesis $\rk \mathcal{D} \ge 4\dim\Sigma + 4$, the map $ev : \relHorTilde|_O\to \relHor|_O$ is surjective on sections over any contractible open chart $O\subset \Sigma$.

	Let $(x,y,F)$ represent a jet in $\relQHor$. Then $V = \im F$ is an $\Omega$-isotropic subspace of $\mathcal{D}_y$ and so $V\subset V^\Omega$. As $V$ is $\Omega$-regular, we have $$\codim V^\Omega = \cork\mathcal{D}\times \dim V = 3\dim V.$$
	Now, from the dimension condition, we conclude that the codimension of $V$ in $V^\Omega$ is $\ge 4$. Then, for any $\tau \in V^\Omega \setminus V$ we have that $V^\prime = V+ \langle\tau\rangle$ is again isotropic. By \autoref{prop:qContOmegaRegular}, $V^\prime$ is then $\Omega$-regular as well. We can now define an extension $\tilde{F} : T_x\Sigma\oplus\mathbb{R} \to T_y M$ by $\tilde{F}(v,t) = F(v) + t\tau$ for all $v\in T_x\Sigma$ and $t\in \mathbb{R}$. Clearly $(x,y,\tilde{F})$ is then a jet in $\relQHorTilde$. Proceeding just as in \autoref{thm:hPrinIsocontactDeg2}, we can now complete the proof.
\end{proof}

\subsubsection{Existence of Horizontal Immersions}
\begin{theorem}\label{thm:existenceHorizImmQCont}
	Let $\mathcal{D}$ be a quaternionic contact structure on $M$. Then any map $u:\Sigma\to M$ can be homotoped to a $\mathcal{D}$-horizontal immersion provided, $\rk\mathcal{D} \ge \max \{4\dim\Sigma + 4, \;5\dim\Sigma - 3\}$. Furthermore, the homotopy can be made arbitrarily $C^0$-small.
\end{theorem}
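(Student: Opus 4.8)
The plan is to follow the template of \autoref{thm:existenceHorizImm}: first reduce to the existence of a \emph{formal} $\calD$-horizontal immersion covering $u$, construct such a formal solution by an obstruction-theoretic argument once a suitable connectivity estimate is in place, and then invoke the $h$-principle \autoref{thm:hPrinHorizImmQCont} to deform it to an honest horizontal immersion, the $C^0$-density providing the $C^0$-small homotopy. By Pansu's observation (\autoref{prop:qContOmegaRegular}), every $\Omega$-isotropic subspace of a quaternionic contact distribution is automatically $\Omega$-regular; hence a formal solution over $u$ is simply a bundle monomorphism $F:T\Sigma\to u^*\calD$ with $F^*\Omega=0$, with no regularity to impose separately. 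Accordingly I would introduce the bundle $\calF\subset\hom(T\Sigma,u^*TM)$ whose fiber over $x$ is the set of injective maps $T_x\Sigma\to\calD_{u(x)}$ with $\Omega$-isotropic image, and show it admits a global section.

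Fixing a frame of $T_x\Sigma$ identifies the fiber of $\calF$ with
\[
R(k)=\bigl\{(v_1,\dots,v_k)\in V_k(D)\ \bigm|\ \langle v_1,\dots,v_k\rangle\text{ is }\Omega\text{-isotropic}\bigr\},
\]
where $D=\calD_{u(x)}$ carries the three $2$-forms $\omega^i=d\lambda^i|_\calD$ in a suitable local trivialization, $\dim D=d=\rk\calD$ and $k=\dim\Sigma$, and $V_k(D)$ is the space of $k$-frames in $D$. The crux is the claim that $R(k)$ is $(d-4k+2)$-connected, proved by induction on $k$ exactly as in \autoref{lemma:connectivityOfHorizR(k)}. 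For $k=1$ every line is $\Omega$-isotropic, so $R(1)=D\setminus\{0\}\simeq S^{d-1}$. For the inductive step the forgetful projection $p:R(k)\to R(k-1)$ is a fiber bundle; over a frame spanning an isotropic (hence $\Omega$-regular) $V$ with $\dim V=k-1$, the fiber is $\{\tau\in V^\Omega:\tau\notin V\}$, since any such $\tau$ produces the $\Omega$-isotropic (hence, by \autoref{prop:qContOmegaRegular}, $\Omega$-regular) extension $V+\langle\tau\rangle$. As $\calD$ has corank $3$ and $V$ is $\Omega$-regular, $\codim V^\Omega=3\dim V=3(k-1)$, so this fiber is $\bbR^{\,d-3(k-1)}\setminus\bbR^{\,k-1}\simeq S^{\,d-4k+3}$, which is $(d-4k+2)$-connected; the long exact sequence of $p$ combined with the inductive hypothesis for $R(k-1)$ then yields the estimate for $R(k)$. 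Consequently, when $\rk\calD\ge 5\dim\Sigma-3$ the fibers of $\calF$ are $(\dim\Sigma-1)$-connected, the primary obstructions in $H^{i+1}(\Sigma;\pi_i(R(k)))$ all vanish, and $\calF$ has a global section, i.e.\ a formal $\calD$-horizontal immersion covering $u$.

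Finally, since $\rk\calD\ge 4\dim\Sigma+4$ as well, \autoref{thm:hPrinHorizImmQCont} connects this formal solution through sections of $\relHor$ to a holonomic one whose base map is the desired $\calD$-horizontal immersion; the $C^0$-density assertion of that theorem keeps the underlying homotopy of maps arbitrarily $C^0$-close to $u$, so the path of base maps gives the required $C^0$-small homotopy. The only genuine work beyond routine bookkeeping is verifying the fiber bundle structure of $p:R(k)\to R(k-1)$ and carrying out the connectivity induction; in contrast to the degree $2$ case there is no quotient bundle $T\Sigma/K$ to control, so no cohomological hypothesis on $\Sigma$ is needed, and the principle ``isotropic implies regular'' makes the frame-extension step immediate.
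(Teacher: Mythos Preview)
Your proposal is correct and follows essentially the same route as the paper's proof: reduce to finding a section of the bundle $\calF$ of injective $\Omega$-isotropic maps (regularity being automatic by \autoref{prop:qContOmegaRegular}), establish the $(d-4k+2)$-connectivity of the fiber $R(k)$ by the same inductive argument as in \autoref{lemma:connectivityOfHorizR(k)}, and then apply \autoref{thm:hPrinHorizImmQCont}. Your identification of the fiber of $p:R(k)\to R(k-1)$ as $V^\Omega\setminus V\cong\bbR^{d-3(k-1)}\setminus\bbR^{k-1}$ is exactly the simplification the paper alludes to when it calls this case ``simpler'' than the degree~$2$ situation.
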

\begin{proof}
	The proof is similar to that of \autoref{thm:existenceHorizImm}; in fact it is simpler since $\Omega$-regularity is automatic by \autoref{prop:qContOmegaRegular}. Given a map $u:\Sigma \to M$, we consider the subbundle $\mathcal{F} \subset \hom(T\Sigma, u^*TM)$ with the fibers given as $$\mathcal{F}_x = \Big\{F:T_x \Sigma \to \mathcal{D}_{u(x)} \;\Big|\; \text{$F$ is injective and $\Omega$-isotropic}\Big\}, \quad x\in\Sigma.$$
	Clearly, a global section of $\mathcal{F}$ is precisely a formal $\mathcal{D}$-horizontal immersion covering $u$. A choice of a frame of $T_x\Sigma$ lets us identify $\mathcal{F}_x$ with the space
	$$R(k) = \Big\{(v_1,\ldots,v_k) \in V_k(\mathcal{D}_x) \;\Big|\; \text{the span $\langle v_1,\ldots,v_k \rangle \subset \mathcal{D}_x $ is $\Omega_x$- isotropic}\Big\},$$
	where $V_k(D)$ is the space of $k$-frames in a vector space $D$. A very similar argument as in \autoref{lemma:connectivityOfHorizR(k)} gives us that the space $R(k)$, and consequently the fiber $\mathcal{F}_x$, is $\rk \mathcal{D} - 4k + 2$-connected. The proof \autoref{thm:existenceHorizImmQCont} then follows exactly as in \autoref{thm:existenceHorizImm}.
\end{proof}

We can now prove \autoref{thm:mainTheoremHorizQuat} from \autoref{thm:hPrinHorizImmQCont} and \autoref{thm:existenceHorizImmQCont}.

\begin{remark}\label{rmk:existenceGermHorizQuatContact}
	As in the previous two cases, we can deduce the existence of \emph{germs} of horizontal $k$-submanifolds to a given quaternionic contact structure $\mathcal{D}$ provided $\rk \mathcal{D} \ge 4\dim\Sigma$.
\end{remark}

\subsection{Isocontact Immersions into Quaternionic Contact Manifolds}
\begin{theorem}\label{thm:hPrinIsocontactQCont}
	Suppose $\mathcal{D}$ is a quaternionic contact structure on a manifold $M$ and $K$ is a contact structure on $\Sigma$. Then, $\relICont$ satisfies the $C^0$-dense $h$-principle provided $\rk \mathcal{D} \ge 4\rk K + 4$.
\end{theorem}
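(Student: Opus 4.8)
The plan is to follow the scheme of \autoref{thm:hPrinIsocontactDeg2} verbatim, substituting the quaternionic analogue of the degree $2$ linear algebra. First I would embed $(\Sigma,K)$ into $(\tilde\Sigma,\tilde K)=(\Sigma\times\bbR,\,K\times\bbR)$, with $\pi:\tilde\Sigma\to\Sigma$ the projection, and form the relation $\relIContTilde\subset J^1(\tilde\Sigma,M)$. By \autoref{thm:hPrinGenExtn} it suffices to show that for every contractible open $O\subseteq\Sigma$ the map $ev:\Gamma\relIContTilde|_O\to\Gamma\relICont|_O$ is surjective on sections. Given a section $F:(TO,K)\to(TM,\calD)$ of $\relICont$ over $u=\bs F$, I would seek an extension of the form $\hat F(v,t)=F(v)+t\tau$ for a section $\tau$ of $u^*\calD|_O$. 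Since $\tilde K=K\times\bbR$ and $\Omega_{\tilde K}=\pi^*\Omega_K$, the requirements that $\hat F$ be an injective, $\Omega$-regular $1$-jet with $\hat F^{-1}\calD=\tilde K$ obeying the curvature equation reduce, pointwise over $\sigma\in O$, to the conditions that $0\ne\tau_\sigma\in V_\sigma^\Omega$ and that $V_\sigma+\langle\tau_\sigma\rangle$ be an $\Omega$-regular subspace of $\calD_{u(\sigma)}$, where $V_\sigma:=F(K_\sigma)$.

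Next comes the fiberwise linear algebra. Choosing trivializations of $T\Sigma/K$ and $TM/\calD$ adapted to $F$, the curvature condition forces $V=V_\sigma$ to be symplectic for $\omega^1=d\lambda^1|_\calD$ and isotropic for $\omega^2,\omega^3$; in particular $V\cap V^\Omega=0$, so any nonzero $\tau\in V^\Omega$ automatically lies outside $\im F$ and makes $\hat F$ injective. Since $F$ is $\Omega$-regular, $\codim_\calD V^\Omega=3\rk K$, hence $\dim V^\Omega=\rk\calD-3\rk K$. Writing $\omega^i=g(J_i\cdot,\cdot)$ for the quaternionic structure $(J_1,J_2,J_3)$ and using $\tau\notin V$, one checks that $V+\langle\tau\rangle$ is $\Omega$-regular precisely when the map $\eta\mapsto(\omega^i(\eta,\tau))_{i=1,2,3}$ sends $V^\Omega$ onto $\bbR^3$, i.e.\ precisely when $\tau$ avoids the subspace $(SV^\Omega)^{\perp_g}\cap V^\Omega$ for every nonzero $S$ in the span of $J_1,J_2,J_3$. \textbf{This is the step I expect to be the main obstacle:} one must prove that the resulting bad locus $\mathcal B_\sigma\subseteq V^\Omega_\sigma$ — a union over $[S]\in\bbR P^2$ of such subspaces — has dimension at most $2$. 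I would attack this using that $V$ is $\omega^1$-symplectic and $\omega^{2,3}$-isotropic (which rigidly constrains the mutual position of $V,J_1V,J_2V,J_3V$) together with the fatness of $\calD$, so that every $S=\sum a_iJ_i$ with $(a_1,a_2,a_3)\ne0$ satisfies $S^2=-\|a\|^2I$ and the associated connecting operators have no real eigenvalue; this plays here the role of \autoref{prop:fatTupleIsoContact} and \autoref{prop:fatTupleExtension} in the degree $2$ case.

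Granting $\dim\mathcal B_\sigma\le 2$, the fiber $V^\Omega_\sigma\setminus\big(\mathcal B_\sigma\cup\{0\}\big)$ is $(\rk\calD-3\rk K-4)$-connected. Over a contractible $O$ the sub-bundle $\bigcup_\sigma V^\Omega_\sigma\subset u^*\calD|_O$ is trivial, and extending $F$ to a section of $\relIContTilde|_O$ amounts to finding a nowhere-zero section $\tau$ of this bundle avoiding $\mathcal B$; by obstruction theory such a $\tau$ exists, and can be chosen continuously in any parameter polyhedron, as soon as the fiber is $(\dim O-1)=\rk K$-connected, i.e.\ as soon as $\rk\calD-3\rk K-4\ge\rk K$, which is exactly the hypothesis $\rk\calD\ge4\rk K+4$. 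This gives surjectivity of $ev$ on sections over contractible sets, and \autoref{thm:hPrinGenExtn} then delivers the $C^0$-dense $h$-principle for $\relICont$. The companion existence statement would follow by the usual obstruction-theoretic construction of a formal solution, exactly as in \autoref{thm:existenceIsocontact}, using the same connectivity estimate.
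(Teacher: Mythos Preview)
Your overall scheme is exactly the paper's: embed in $(\Sigma\times\bbR,K\times\bbR)$, reduce via \autoref{thm:hPrinGenExtn} to the local extension problem, and for a given $F$ seek $\tau\in V^\Omega$ with $V+\langle\tau\rangle$ $\Omega$-regular. Your characterisation of the bad locus as $\mathcal B=\bigcup_{[S]}(SV^\Omega)^{\perp_g}\cap V^\Omega$ is also correct. The gap is precisely where you flag it: the claim $\dim\mathcal B\le 2$. There is no reason for the individual pieces $(SV^\Omega)^{\perp_g}\cap V^\Omega$ to be zero-dimensional, and in fact the bad locus can have dimension as large as $\rk K$. The paper does not attempt to control $\mathcal B$ as a stratified union over $\bbR P^2$; instead it exhibits a single linear subspace containing it. Using the quaternionic identities one has $(SV^\Omega)^{\perp_g}=S^{-1}(\sum J_iV)\subset V+\sum J_iV$ for every nonzero $S=\sum a_iJ_i$, so $\mathcal B\subset V^\Omega\cap\big(V+\sum J_iV\big)$. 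The point is then to compute this intersection: since $V$ is $\omega^2,\omega^3$-isotropic one checks $V+J_1V\subset V^{\perp_2}\cap V^{\perp_3}$, while the $g$-orthogonal splitting $\calD=(V^{\perp_2}\cap V^{\perp_3})\oplus_g(J_2V+J_3V)$ forces $(V^{\perp_2}\cap V^{\perp_3})\cap(V+\sum J_iV)=V+J_1V$. Hence $V^\Omega\cap(V+\sum J_iV)=V^{\perp_1}\cap(V+J_1V)$, which a dimension count shows has dimension exactly $\dim V=\rk K$. The paper then verifies that for $\tau$ outside $V+\sum J_iV$ one has $(\sum J_iV)\cap\langle J_1\tau,J_2\tau,J_3\tau\rangle=0$ (using $S^2=-\|a\|^2I$), giving regularity of $V+\langle\tau\rangle$.

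A second, minor point: once $O$ is contractible the bundle $\bigcup_\sigma V^\Omega_\sigma$ is trivial and the ``bad'' sub-bundle is a genuine vector sub-bundle of rank $\rk K$; you only need the complement to be \emph{nonempty} to pick a section $\tau$, not $(\dim O-1)$-connected. Nonemptiness requires the codimension $\rk\calD-4\rk K$ to be positive, and since $\rk\calD\equiv 0\pmod 4$ with $\rk K$ even this is equivalent to $\rk\calD\ge 4\rk K+4$. So with the correct dimension bound $\rk K$ on the bad locus (rather than $2$), your numerics work without any obstruction theory.
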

\begin{proof}
	The proof is very similar to that of \autoref{thm:hPrinIsocontactDeg2}. Suppose $F:T_x\Sigma \to T_y M$ represents a jet in $\relICont$. Suitably choosing trivializations near $x$ and $y$, we may assume that the induced map $\tilde{F} :T\Sigma/K \hookrightarrow TM/\mathcal{D}$ is the canonical injection $\mathbb{R} \to \mathbb{R}\times \{0\} \subset \mathbb{R}^3$. In particular, there exists local $2$-forms $\eta, \omega^i, i=1,2,3$ so that
	\[\Omega_K \underset{loc.}{=} \eta \quad \text{and} \quad \Omega \underset{loc.}{=} (\omega^1,\omega^2,\omega^3).\]
	And furthermore, we have a quaternionic structure $(J_1, J_2, J_3)$ so that $g(J_i u, v) = \omega^i(u,v)$ for all $u,v\in\mathcal{D}$ and for each $i=1,2,3$. Here $g$ is a Riemannian metric on the quaternionic contact structure $\mathcal{D}$. The curvature condition $F^*\Omega|_{K} = \tilde{F} \circ \Omega_K$ translates into \[F^*\omega^1|_K = \eta, \quad F^*\omega^2|_K = 0 = F^*\omega^3|_K.\]
	Since $K$ is contact, $\eta$ is nondegenerate. Consequently, $V = F(K)$ is $\omega^1$-symplectic and $\omega^2,\omega^3$-isotropic. 

	Now, for any subspace $W\subset \mathcal{D}$, we have $\mathcal{D} = W^\Omega \oplus_g \big(\sum_{i=1}^3 J_i W\big)$, indeed, \[g\left( z, \sum J_i w_i \right) = \sum \omega^i(z,w_i) = 0, \quad \forall z\in W^\Omega, \; J_i w_i \in J_i W.\]
	Consequently, $W\subset \mathcal{D}$ is $\Omega$-regular if and only if $\sum J_i W$ is a direct sum. Also, observe that, 
	\[\omega^2(u, -J_1v) = g(J_2J_1 v,u) = -g(J_3v,u) = \omega^3(u,v), \quad u,v\in\mathcal{D},\]
	and so $(\mathcal{D}_y, \omega^2|_y, \omega^3|_y)$ is a degree $2$ fat tuple with the connecting automorphism $A = -J_1$.

	As $V$ is $(\omega^2,\omega^3)$-isotropic and is $(\omega^2,\omega^3)$-regular, we get from \autoref{prop:fatTupleOmegaRegular} and \autoref{prop:fatTupleVAV} that \[V\oplus J_1V \subset V^{\perp_2}\cap V^{\perp_3} \quad \text{and} \quad \codim\big(V^{\perp_2} \cap V^{\perp_3}\big) = 2\dim V.\]
	Also, $\mathcal{D} = \big(V^{\perp_2} \cap V^{\perp_3}\big) \oplus_g \big(J_2 V + J_3 V\big)$ and hence, $\big(V^{\perp_2}\cap V^{\perp_3}\big) \cap\big(V+ J_1 V+ J_2V + J_3 V\big) = V+ J_1 V$. But then, 
	\[V^\Omega \cap \left( V+\sum J_i V \right) = V^{\perp_1} \cap \left(V^{\perp_2} \cap V^{\perp_3} \cap \left(V + \sum J_i V\right)\right) = V^{\perp_1} \cap (V+ J_1 V).\]
	Since $V$ is $\omega^1$-symplectic, we have $\mathcal{D}= V^{\perp_1} \oplus V = V^{\perp_1} + (V\oplus J_1 V)$. A dimension counting argument then gives us $\dim \big(V^\Omega \cap (V+\sum J_i V)\big) = \dim V$. But then from the hypothesis $\rk \mathcal{D} \ge 4 \dim V + 4$, we get the intersection has codimension $\ge 4$ in $V^\Omega$. Pick $\tau \in V^\Omega \setminus \big(V + \sum J_i V\big)$. We claim that $V^\prime = V+\langle\tau\rangle$ is $\Omega$-regular.

	We only need to show that $\left(\sum J_i V\right) \cap \langle J_1\tau, J_2\tau, J_3 \tau\rangle = 0$. Suppose, $z = \sum a_i J_i \tau$ is in the intersection for some $a_i\in\mathbb{R}$. Note that $ J_s\big(\sum J_i V\big) \subset V + \sum J_i V$ for each $s = 1,2,3$.  If $(a_1, a_2, a_3) \ne 0$, we have 
	\[\tau = \left(\sum a^i J_i\right)^{-1} z = \frac{-\sum a_i J_i}{\sum a_i^2} z \in \frac{-\sum a_i J_i}{\sum a_i^2} \left(\sum J_i V\right) \subset V + \sum J_i V.\]
	This contradicts our choice of $\tau \not\in V+\sum J_i V$. Hence, $z = 0$ and we have $V^\prime$ is indeed $\Omega$-regular. We can now finish the proof just as in \autoref{thm:hPrinIsocontactDeg2}.
\end{proof}

\subsubsection{Existence of Regular Isocontact Immersions}
\begin{theorem}\label{thm:existenceIsocontactQCont}
	Suppose $\mathcal{D}$ is a quaternionic contact structure on a manifold $M$ and $K$ is a contact structure on $\Sigma$. Assume that both $K$ and $\mathcal{D}$ are cotrivializable. Then, any map $u : \Sigma \to M$ can be homotoped to an $\Omega$-regular $K$-isocontact immersion $(\Sigma,K) \to (M,\mathcal{D})$ provided $\rk \mathcal{D} \ge \max \{4 \rk K + 4, 6 \rk K - 2\}$.
\end{theorem}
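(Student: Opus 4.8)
The plan is to follow the strategy of \autoref{thm:existenceIsocontact}: reduce the statement to the construction of a \emph{formal} $\Omega$-regular $K$-isocontact immersion covering $u$, and build the latter by obstruction theory after identifying the relevant fibre with a linear-algebraic model space whose connectivity is controlled as in \autoref{lemma:connectivityOfR(k)IsocontactDeg2Fat}.

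First I would invoke \autoref{thm:hPrinIsocontactQCont}: since $\rk \calD \ge 4\rk K + 4$, the relation $\relICont$ satisfies the $C^0$-dense $h$-principle, so it suffices to produce a section of $\relICont$ over $\Sigma$ with base map $u$ --- equivalently, a bundle monomorphism $F : T\Sigma \to u^*TM$ with $F^{-1}\calD = K$, with $\tilde F$ injective, with $F$ being $\Omega$-regular, and satisfying $F^*\Omega|_K = \tilde F \circ \Omega_K$. Cotriviality of $K$ and $\calD$, together with $\cork K = 1 < 3 = \cork \calD$, guarantees an injective bundle morphism $G : T\Sigma/K \to u^*TM/\calD$; fix one. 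Once $F|_K$ is in hand with $\tilde F = G$, extending $F$ over a complement of $K$ is unobstructed and automatically yields an injective $F$ with injective $\tilde F$, so it is enough to construct a section of the bundle $\calF \subset \hom(K, u^*\calD)$ with fibre
\[
\calF_x = \bigl\{ F : K_x \hookrightarrow \calD_{u(x)} \;:\; F \text{ is } \Omega\text{-regular and } F^*\Omega|_{K_x} = G_x \circ \Omega_K|_x \bigr\}.
\]

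Next I would analyse $\calF_x$ via the linear algebra of \autoref{sec:fatAndDegree}. Choosing trivializations near $x$ and $u(x)$ so that $\tilde F = G$ becomes the standard inclusion $\bbR \hookrightarrow \bbR^3$ and $\Omega = (\omega^1,\omega^2,\omega^3)$ arises from a genuine quaternionic structure $(J_1,J_2,J_3)$ compatible with a metric $g$ (as in the proof of \autoref{thm:hPrinIsocontactQCont}), the curvature condition becomes: $\omega^1|_V$ is the symplectic form and $\omega^2|_V = \omega^3|_V = 0$, where $V = F(K_x)$. Fixing a symplectic basis of $(K_x, \Omega_K|_x)$ identifies $\calF_x$ with
\[
R(k) = \bigl\{ (u_1,v_1,\dots,u_k,v_k) \in V_{2k}(D) \;:\; V \text{ is } \Omega\text{-regular and } \omega^2,\omega^3\text{-isotropic},\ (u_i,v_i)_{i=1}^{k} \text{ a symplectic basis of } \omega^1|_V \bigr\},
\]
where $D = \calD_{u(x)}$, $V = \langle u_1,v_1,\dots,u_k,v_k\rangle$, and $\rk K = 2k$. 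The core step is the analogue of \autoref{lemma:connectivityOfR(k)IsocontactDeg2Fat}: by induction on $k$ I would estimate the connectivity of $R(k)$ using the fibration $R(k) \to R(k-1)$ forgetting $(u_k,v_k)$. Over $b \in R(k-1)$ with span $V$, the computations in the proof of \autoref{thm:hPrinIsocontactQCont} show that the fibre consists of $\omega^1$-symplectic pairs $(u_k,v_k)$ lying in $V^\Omega$ with $\omega^2(u_k,v_k) = \omega^3(u_k,v_k) = 0$ and $V \oplus \langle u_k,v_k\rangle$ still $\Omega$-regular; here one exploits that $(\calD,\omega^2,\omega^3)$ is a degree $2$ fat tuple with connecting automorphism $-J_1$, the $g$-orthogonal splitting $\calD = V^\Omega \oplus_g (J_1V + J_2V + J_3V)$, \autoref{prop:qContOmegaRegular}, \autoref{prop:fatTupleExtension}, and the one-dimensional extension argument from that proof. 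Since $\dim V^\Omega = \rk \calD - 3\dim V$ at each stage, tracking the dimension drop together with the $\Omega$-regularity constraint gives a connectivity bound for $R(k)$ for which the obstructions to a global section of $\calF$ over the $(\rk K + 1)$-dimensional manifold $\Sigma$ vanish precisely when $\rk \calD \ge 6\rk K - 2$. With $\rk \calD \ge \max\{4\rk K + 4,\ 6\rk K - 2\}$ we thus obtain a section of $\calF$, hence a formal $\Omega$-regular $K$-isocontact immersion covering $u$, and \autoref{thm:hPrinIsocontactQCont} upgrades it to a genuine one $C^0$-close to $u$.

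The main obstacle is exactly this connectivity estimate for $R(k)$. In contrast with the horizontal quaternionic case, where $\Omega$-isotropy already forces $\Omega$-regularity (\autoref{prop:qContOmegaRegular}) and the model space is as connected as for degree $2$ fat distributions, here $\Omega$-regularity is an independent open condition that must be carried along together with the $\omega^1$-symplectic and $\omega^2,\omega^3$-isotropic constraints; controlling how much it lowers the connectivity of the fibres of $R(k) \to R(k-1)$ --- and hence extracting the sharp bound $6\rk K - 2$ --- is the delicate part.
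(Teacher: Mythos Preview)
Your proposal is correct and follows essentially the same approach as the paper: reduce to producing a formal section via \autoref{thm:hPrinIsocontactQCont}, use cotriviality to fix $G$, construct the bundle $\calF$ with fibre the model space $R(k)$, and estimate the connectivity of $R(k)$ by induction on $k$ using the forgetful fibration $R(k)\to R(k-1)$. The paper carries out exactly this connectivity computation in \autoref{lemma:connectivityOfIsocontactQContR(k)}, obtaining that $R(k)$ is $(d-8k+2)$-connected (i.e.\ $(\rk\calD - 4\rk K + 2)$-connected), which yields a global section precisely under $\rk\calD \ge 6\rk K - 2$.
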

\begin{proof}
	Given $u : \Sigma \to M$, we can get a monomorphism $G : T\Sigma/ K \hookrightarrow u^*TM/\mathcal{D}$, since $K$ and $\mathcal{D}$ are cotrivializable. Next, we consider the bundle $\mathcal{F} \subset \hom(K, u^*\mathcal{D})$ with fibers
	\[\mathcal{F}_x = \Big\{F:K_x \to \mathcal{D}_{u(y)} \;\Big|\; \text{$F$ is injective, $\Omega$-regular and $F^*\Omega|_{K_x} = G_x \circ \Omega_K$}\Big\}.\]
	Assume $\rk K = 2k$ and $\rk \mathcal{D}=d$. Then, suitably choosing trivializations, we can identify $\mathcal{F}_x$ with the subspace $R(k) \subset V_{2k}(\mathcal{D}_y)$:
	\[R(k) = \Big\{b=(u_1,v_1,\ldots,u_k,v_k) \in V_{2k}(\mathcal{D}_y) \;\Big|\; \substack{\text{$b$ is an $\omega^1$-symplectic basis for $V:=\Span\langle u_i,v_i\rangle$,} \\ \text{$V$ is $\omega^2,\omega^3$-isotropic and $\Omega$-regular.}} \Big\}.\]
	We can check via an inductive argument similar to \autoref{lemma:connectivityOfR(k)IsocontactDeg2Fat} that $R(k)$ is $(\rk\mathcal{D} - 4\rk K + 2)$-connected (\autoref{lemma:connectivityOfIsocontactQContR(k)}). Since $\rk \mathcal{D} \ge 6\rk K - 2$, the fibers of $\mathcal{F}$ is $\dim \Sigma-1$-connected and hence, we get a global section of $\mathcal{F}$. We conclude the proof by an application of \autoref{thm:hPrinIsocontactQCont}, since $\rk \mathcal{D} \ge 4 \rk K + 4$ as well.
\end{proof}
\autoref{thm:hPrinIsocontactQCont} and \autoref{thm:existenceIsocontactQCont} implies \autoref{thm:mainTheoremIsocontactQCont}.

\begin{lemma}\label{lemma:connectivityOfIsocontactQContR(k)}
	$R(k)$ in the above theorem is $d - 8k + 2$ connected, where $\rk \mathcal{D} = d$ and $\rk K = 2k$.
\end{lemma}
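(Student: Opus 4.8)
The plan is to argue by induction on $k$, following the scheme of \autoref{lemma:connectivityOfR(k)IsocontactDeg2Fat}, but inserting an extra layer to handle a feature absent in the degree $2$ fat case (\autoref{prop:fatTupleIsoContact}): an $\omega^1$-symplectic, $\omega^2,\omega^3$-isotropic subspace of $\calD_y$ need \emph{not} be $\Omega$-regular, so the last clause defining $R(k)$ is a genuine constraint. Work in the linear model $D=\calD_y$ with metric $g$, quaternionic structure $(J_1,J_2,J_3)$ and $\omega^i=g(J_i\_,\_)$, $\dim D=d$. For $b=(u_1,v_1,\dots,u_{k-1},v_{k-1})\in R(k-1)$ set $V=\langle u_i,v_i\rangle$, so $\dim V=2(k-1)$ and $V$ is $\omega^1$-symplectic, $\omega^2,\omega^3$-isotropic and $\Omega$-regular. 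Since $R(0)$ is a point, assume inductively that $R(k-1)$ is $(d-8k+10)$-connected; the goal is to understand the fibre $p^{-1}(b)$ of the bundle $p\colon R(k)\to R(k-1)$ forgetting $(u_k,v_k)$, which consists of the pairs $(u,v)$ completing $b$ to a point of $R(k)$.

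The engine of the argument is the following linear algebra: if $S\subset D$ is $\omega^2,\omega^3$-isotropic and $\Omega$-regular, then $W_S:=S+J_1S+J_2S+J_3S$ is a direct sum and is $J_i$-invariant for all $i$; consequently $\dim W_S=4\dim S$, and $D=W_S\oplus_g W_S^{\perp_g}$ with $W_S^{\perp_g}$ again a quaternionic subspace, orthogonal to $W_S$ for every $\omega^i$ and contained in $S^\Omega$. Directness of $S+\bigoplus_iJ_iS$ and $J_iS\cap J_jS=0$ are immediate from $\Omega$-regularity (and force $S\cap J_1S=0$, since otherwise $J_2S\cap J_3S$ would be $2$-dimensional); that $S\cap\sum_iJ_iS=0$ is where positive definiteness of $g$ enters: writing $z=J_1\alpha+J_2\beta+J_3\gamma\in S$ with $\alpha,\beta,\gamma\in S$ and evaluating $g(z,J_2S)=g(z,J_3S)=0$ yields $|\beta|^2=|\gamma|^2=\omega^1(\gamma,\beta)=g(J_1\gamma,\beta)$, which by Cauchy--Schwarz forces $\beta=J_1\gamma$, hence $J_2\beta+J_3\gamma=0$ and $z\in S\cap J_1S=0$. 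The $J_i$-invariance is the identity $J_1(S+\textstyle\sum_iJ_iS)=J_1S-S+J_3S-J_2S=S+\sum_iJ_iS$ together with its cyclic analogues. Applying this to $S=V$ gives $W:=W_V$ of dimension $8(k-1)$ with $\dim(V^\Omega\cap W)=\dim V^\Omega-\dim W^{\perp_g}=2(k-1)$.

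I would then dissect $p^{-1}(b)$ by two projections. For $(u,v)\in p^{-1}(b)$ the defining conditions force $u,v\in V^\Omega$, $\omega^1(u,v)=1$, $\omega^2(u,v)=\omega^3(u,v)=0$, and $V''+\langle v\rangle$ $\Omega$-regular where $V''=V+\langle u\rangle$; in particular $V''$ is $\Omega$-regular, so $u$ lies in $U_b:=\{u\in V^\Omega\setminus\{0\}:V+\langle u\rangle\text{ is }\Omega\text{-regular}\}$. The ``bad'' set in $V^\Omega$ is contained in $W\cap V^\Omega$ — failure of $\Omega$-regularity of $V+\langle u\rangle$ means $(aJ_1+bJ_2+cJ_3)u\in\sum_iJ_iV$ for some $(a,b,c)\neq0$, and $(aJ_1+bJ_2+cJ_3)^{-1}(\sum_iJ_iV)\subset W$ — so it is a closed real-algebraic set of dimension $\le 2(k-1)$; deleting it and $\{0\}$ from $V^\Omega$ (of dimension $d-6(k-1)$) shows $U_b$ is $(d-8k+6)$-connected. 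Over $U_b$ form the smooth affine bundle $\widetilde E=\{(u,v):u\in U_b,\ v\in\calA_u\}$ with $\calA_u=\{v\in V^\Omega:\omega^i(u,v)=\delta_{i1},\ i=1,2,3\}$; since $V''$ is $\Omega$-regular, $\calA_u$ is nonempty with direction $(V'')^\Omega$, whence $\dim\calA_u=d-3(2k-1)$ and $\widetilde E\simeq U_b$ is $(d-8k+6)$-connected. Finally $p^{-1}(b)=\widetilde E\setminus\widetilde Z$, where $\widetilde Z$ is the closed real-algebraic locus where $V''+\langle v\rangle$ fails to be $\Omega$-regular; applying the fact of the second paragraph to $S=V''$ produces the quaternionic span $W''$ of dimension $4(2k-1)$, inside which the whole bad locus of each $\calA_u$ lies, so that locus is contained in $W''\cap\calA_u$, a coset of $W''\cap(V'')^\Omega$ of dimension $\dim(V'')^\Omega-\dim W''^{\perp_g}=2k-1$. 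Hence $\widetilde Z$ has codimension $\ge(d-3(2k-1))-(2k-1)=d-8k+4$ in $\widetilde E$, and $p^{-1}(b)$ — the complement of a tame subset of that codimension in a $(d-8k+6)$-connected manifold — is $(d-8k+2)$-connected. The homotopy exact sequence of $p$ (fibre $(d-8k+2)$-connected, base $(d-8k+10)$-connected) then gives that $R(k)$ is $(d-8k+2)$-connected.

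The main obstacle is the linear-algebraic claim of the second paragraph — above all that $S\cap\sum_iJ_iS=0$ for $\omega^2,\omega^3$-isotropic $\Omega$-regular $S$, which genuinely uses positive definiteness of the quaternionic-contact metric $g$ and is precisely what makes the bad loci land inside the quaternionic spans $W,W''$ and hence be of low dimension. The remaining ingredients are routine: the two ``delete a closed tame subset of codimension $\ge c$, keep $(c-2)$-connectivity'' estimates, and — exactly as tacitly used in \autoref{lemma:connectivityOfR(k)IsocontactDeg2Fat} — the local triviality of $p\colon R(k)\to R(k-1)$ and of $\widetilde E\to U_b$, which follows from homogeneity of these Stiefel-type configuration spaces under the compact quaternionic structure group.
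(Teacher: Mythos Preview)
Your argument is correct and follows the same inductive scheme as the paper's proof: induct on $k$, study the fibration $p\colon R(k)\to R(k-1)$, decompose the fibre into a choice of $u$ (your $U_b$) followed by a choice of $v$ (your $\calA_u$), and compute the connectivity of each layer by deleting a low-dimensional ``bad'' locus. The final numerics agree.

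There are two genuine differences worth noting. First, you isolate a clean structural lemma --- that for an $\omega^2,\omega^3$-isotropic, $\Omega$-regular subspace $S$ the quaternionic span $W_S=S\oplus J_1S\oplus J_2S\oplus J_3S$ is direct and $J_i$-invariant --- and prove it via a Cauchy--Schwarz argument that explicitly uses positive definiteness of $g$. The paper instead argues ad hoc at each step, referring back to the proof of \autoref{thm:hPrinIsocontactQCont} for the dimension count $\dim(V^\Omega\cap W)=\dim V$. Your packaging makes the role of the Riemannian metric transparent and the bad-locus estimates uniform. Second, in the $v$-step you correctly require $v\in V^\Omega$ (so that $(b,u,v)$ is an $\omega^1$-symplectic frame), whereas the paper's identification of $p^{-1}(b)$ takes $\eta\in V_\tau^{\perp_2}\cap V_\tau^{\perp_3}$ only, omitting the condition $\eta\in V^{\perp_1}$; the paper's fibre description is therefore too large, though its connectivity computation happens to produce the same number $d-8k+2$. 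Your treatment is the more careful one here. A minor stylistic difference: you bound the bad locus by $W\cap V^\Omega$ (resp.\ $W''\cap\calA_u$) rather than characterising it exactly, which is all that is needed for the connectivity estimate; the paper aims for an exact characterisation (correct for $k=1$, slightly off in the induction step as noted).
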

\begin{proof}
	We have
	\[R(1) = \left\{(u,v)\in V_2 (\mathcal{D}_y) \;\Big|\; \omega^1(u,v)=1,\; \omega^2(u,v) = 0 = \omega^2(u,v), \; \text{$\langle u,v\rangle$ is $\Omega$-regular.}\right\}.\]
	For each $0 \ne u \in \mathcal{D}_y$ consider the map
	\begin{align*}
		S_u : u^{\perp_2} \cap u^{\perp_3} &\to \mathbb{R} \\
		v &\mapsto \omega^1(u,v)
	\end{align*}
	As argued in \autoref{thm:hPrinIsocontactQCont}, for some $v \in S_u^{-1}(1)$, the subspace $V = \langle u,v\rangle$ is $\Omega$-regular if and only if $V + J_1 V$ is a direct sum, which is equivalent to having $v \in S_u^{-1}(1) \setminus \langle u, J_1 u\rangle$. Thus, we have identified \[R(1) \equiv \bigcup_{u \in \mathcal{D}_x \setminus 0} \{u\} \times \Big(S_u^{-1}(1) \setminus \langle u,J_1 u\rangle\Big).\]
	Now, $S_u^{-1}(1)$ is a codimension $1$ affine plane in $u^{\perp_2} \cap u^{\perp_3}$ and $\langle u, J_1 u\rangle \subset u^{\perp_2} \cap u^{\perp_3}$ is transverse to $S_u^{-1}(1)$. Hence, we find out the codimension of the affine plane $S_u^{-1}(1) \cap \langle u, J_1 u\rangle$ in $u^{\perp_2} \cap u^{\perp_3}$:
	\[\codim \Big(S_u^{-1}(1) \cap \langle u,J_1 u\rangle\Big) = 1 + (\dim u^{\perp_2}\cap u^{\perp_3} - 2) = (d-2) - 1 = d - 3.\]
	But then the connectivity of $S_u^{-1}(1) \setminus \langle u, J_1 u\rangle$ is $(d-3) - (d - 2 - (d-3)) - 2 = d - 6$. Since $\mathcal{D}_x \setminus 0$ is $d - 2$-connected, we get $R(1)$ is $d-6$-connected by an application of the homotopy long exact sequence. Note that $d - 6= d- 8.1 + 2$.
	
	Let us now assume $R(k-1)$ is $d-8(k-1) + 2 = d - 8k + 10$. Now, consider the projection map $p : V_{2k}(\mathcal{D}_x) \to V_{2(k-1)}(\mathcal{D}_x)$ which maps $R(k)$ into $R(k-1)$. Say, $b = (u_1,v_1,\ldots,u_{k-1},v_{k-1}) \in R(k-1)$ and $V = \Span\langle u_i,v_i\rangle$. We show $p^{-1}(b)$ is nonempty and find out its connectivity. As in \autoref{thm:hPrinIsocontactQCont}, we must first pick $\tau \in  V^\Omega \setminus (V + \sum J_i V)$. For any such $\tau$ fixed, we set $V_\tau = V + \langle \tau \rangle$ and then choose $\eta \in (V_\tau^{\perp_2} \cap V_\tau^{\perp_3}) \setminus (V_\tau + J_1 V_\tau)$, satisfying $\omega^1(\tau , \eta) = 1$. We can check that $(u_1,v_1,\ldots,u_{k-1},v_{k-1}, \tau, \eta) \in p^{-1}(b)$. Now, let us consider the map
	\begin{align*}
		S_\tau : V_\tau^{\perp_2} \cap V_\tau^{\perp_3} &\to \mathbb{R} \\
		\eta &\mapsto \omega^1(\tau, \eta)
	\end{align*}
	Then, we have in fact identified
	\[p^{-1}(b) = \bigcup_{\tau \in V^\Omega \setminus (V+\sum J_i V)} \{u\} \times \Big(S_\tau^{-1}(1) \setminus (V_\tau + J_1 V_\tau)\Big).\]
	Since $\dim \big(V^\Omega \cap (V+\sum J_i V)\big) = \dim V$, we get the connectivity of the space of $\tau$ as \[(d - 6(k-1)) - 2(k-1) - 2 = d-8(k-1) - 2 = d - 8k + 6.\]
	On the other hand, the codimension $1$ hyperplane $S_\tau^{-1}(1)$ and $V_\tau + J_1V_\tau \subset V_\tau^{\perp_2} \cap V_\tau^{\perp_3}$ are transverse to each other. Hence, the codimension of $S_\tau^{-1}(1) \cap (V_\tau + J_1 V_\tau)$ in $V_\tau^{\perp_2} \cap V_\tau^{\perp_3}$ is \[1 + \big(\dim(V_\tau^{\perp_2} \cap V_\tau^{\perp_3}) - \dim(V_\tau + J_1 V_\tau)\big) = 1 + \big((d - 2(2k - 1)) - 2(2k - 1)\big) = d - 4(2k-1) + 1.\]
	Consequently, $S_u^{-1}(1) \cap (V_\tau + J_1 V_\tau) \equiv \mathbb{R}^{d-2(2k-1) - (d - 4(2k-1) + 1)} = \mathbb{R}^{2(2k-1) - 1}$. We get the connectivity of $S_\tau^{-1}(1) \setminus (V_\tau + J_1 V_\tau)$:
	\[(d - 2(2k - 1) - 1) - (2(2k-1) - 1) - 2 = d - 4(2k-1) - 2 = d - 8k + 2.\]
	A homotopy long exact sequence argument then gives the connectivity of $p^{-1}(b)$ as $\min\big\{d-8k + 2, d - 8k + 6\big\} = d- 8k + 2$. Then, again appealing to the exact sequence for $p : R(k) \to R(k-1)$, we get the connectivity of $R(k)$ as \[\min\{d - 8k + 2, d-8k + 10\} = d-8k + 2.\]
	This concludes the proof.
\end{proof}

\subsection{Applications in Symplectic Geometry}
A 1-form $\mu$ on a manifold $N$ is said to be a Liouville form if $d\mu$ is symplectic. Any such form defines a contact form $\theta$ on the product manifold $N\times\mathbb{R}$ by $\theta=dz-\pi^*\mu$, where $\pi:N\times \mathbb{R}\to N$ is the projection onto the first factor and $z$ is the coordinate function on $\mathbb{R}$. This construction can be extended to a $p$-tuple of Liouville forms $(\mu^1,\dots,\mu^p)$ on $N$ to obtain a corank-$p$ distribution $\mathcal{D}$ on $N\times\mathbb{R}^p$. If we denote by $(z_1,\dots,z_p)$ the global coordinate system on $\mathbb{R}^p$, then  $\mathcal{D}=\cap_{i=1}^p\ker \lambda^i$, where $\lambda^i = dz_i-\pi^*\mu^i$ and $\pi:M\times\mathbb{R}^p\to M$ is the projection map. We note that the curvature form of $\mathcal{D}$ is given as $$\Omega = \big(d\lambda^i|_\mathcal{D}\big) = \big(\pi^*d\mu^i|_\mathcal{D}\big).$$
The derivative of the projection map $\pi$ restricts to isomorphism $\pi_* : \mathcal{D}_{(x,z)}\to T_x N$ for all $(x,z)\in N\times\mathbb{R}^p$. Thus, it follows that if $(d\mu^1,\dots,d\mu^p)$ is a fat tuple on $T_x N$ for all $x\in N$, then $\mathcal{D}$ is a fat distribution.

Next, recall that given a manifold $N$ with a symplectic form $\omega$, an immersion $f: \Sigma\to N$ is called \emph{Lagrangian} if $f^*\omega = 0$. Now, $\omega = d\mu$ for some Liouville form $\mu$, a Lagrangian immersion $f:\Sigma\to N$ is called \emph{exact} if the closed form $f^*\mu$ is exact. The homotopy type of the space of exact $d\mu$-Lagrangian immersions does not depend on the primitive $\mu$, we refer to \cite{gromovBook,eliashbergBook} for the $h$-principle for exact Lagrangian immersions.

Extend this notion to $p$-tuples $(\mu^1,\ldots,\mu^p)$ of Liouville forms on $N$, if $f:\Sigma\to N$ is exact Lagrangian with respect to each $d\mu^i$, $i=1,\dots,p$, then there exist smooth functions $\phi^i$ such that $f^*\mu^i=d\phi_i$. It is easy to check that $(f,\phi^1,\dots,\phi^p) : \Sigma \to M = N \times \mathbb{R}^p$ is then a $\mathcal{D}$-horizontal immersion. Conversely, every $\mathcal{D}$ horizontal immersion $\Sigma \to M$ projects to an immersion $\Sigma\to N$ which is exact Lagrangian with respect to each $d\mu^i$.

\paragraph{\bfseries Regularity:} For immersions $f:\Sigma\to N$, we have a similar notion of $(d\mu^i)$-regularity. A subspace $V\subset T_x N$ is called \emph{$(d\mu^i)$-regular} if the map,
\begin{align*}
	\psi : T_x N &\to \hom(V,\mathbb{R}^p)\\
	\partial &\to \big(\iota_\partial d\mu^1|_V, \ldots, \iota_\partial d\mu^p|_V\big)
\end{align*}
is surjective (compare \autoref{defn:contOmegaRegular}). An immersion $f:\Sigma\to N$ is called \emph{$(d\mu^i)$-regular} if $V=\im df_\sigma$ is $(d\mu^i)$-regular for each $\sigma\in\Sigma$.
\begin{defn}\label{defn:formalBiLagrangian}
	A monomorphism $F:T\Sigma\to TN$ is said to be a \emph{formal} regular, $(d\mu^i)$-Lagrangian if for each $\sigma\in \Sigma$,
	\begin{itemize}
		\item the subspace $V=\im F_\sigma\subset T_{u(\sigma)}N$ is $(d\mu^i)$-regular subspace, and
		\item $F^*d\mu^i = 0$, that is, $V$ is $d\mu^i$-isotropic, for $i=1,\ldots,p$.
	\end{itemize}
\end{defn}

\begin{prop}\label{prop:regularExactLagrangian}
	Let $\Omega$ be the curvature of the distribution $\mathcal{D}$ on $M = N\times \mathbb{R}^p$. Then, every formal regular, $(d\mu^i)$-Lagrangian immersion lifts to a formal $\Omega$-regular $\mathcal{D}$-horizontal immersion. Conversely, any formal $\Omega$-regular $\mathcal{D}$-horizontal immersion projects to a formal regular, exact $(d\mu^i)$-Lagrangian immersion.
\end{prop}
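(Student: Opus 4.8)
The whole statement will follow by transporting formal data through the fibrewise isomorphism $\pi_*|_{\calD_{(x,z)}}\colon\calD_{(x,z)}\to T_xN$ recalled above, together with the identity $\Omega=(\pi^*d\mu^i|_\calD)_i$. The observation that makes everything work is that for $\xi,\xi'\in\calD_{(x,z)}$ one has $\Omega(\xi,\xi')=\big(d\mu^i(\pi_*\xi,\pi_*\xi')\big)_i$, so $\pi_*|_\calD$ simultaneously intertwines the pair of $2$-forms cutting out $\Omega$-regularity and $\Omega$-isotropy with the pair $(d\mu^1,\dots,d\mu^p)$ on $N$. Everything else is bookkeeping.

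First I would treat the lift. Given a formal regular $(d\mu^i)$-Lagrangian immersion $F\colon T\Sigma\to TN$ covering $u=\bs F$, compose $u$ with the zero section $N\hookrightarrow M=N\times\bbR^p$ to get $\hat u=(u,0)$, and set $\hat F_\sigma=\big(\pi_*|_{\calD_{\hat u(\sigma)}}\big)^{-1}\circ F_\sigma$. Then $\hat F_\sigma$ is injective (a monomorphism composed with an isomorphism) and maps into $\calD$ by construction, so $\hat F$ is a formal $\calD$-horizontal immersion. Using the displayed identity, $\hat F^*\Omega=\big(F^*d\mu^i\big)_i=0$, so $\im\hat F_\sigma$ is $\Omega$-isotropic; and for $\xi\in\calD_{\hat u(\sigma)}$ with $\eta=\pi_*\xi$ the map $\xi\mapsto\iota_\xi\Omega|_{\im\hat F_\sigma}$ equals, read through $\pi_*|_\calD$, the map $\eta\mapsto\big(\iota_\eta d\mu^i|_{\im F_\sigma}\big)_i$, i.e.\ the $(d\mu^i)$-regularity map for $\im F_\sigma$, hence is surjective. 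Thus $\hat F$ is a formal $\Omega$-regular $\calD$-horizontal immersion lifting $F$, and $\pi\circ\bs\hat F=u$.

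For the converse I would simply reverse the construction: starting from a formal $\Omega$-regular $\calD$-horizontal immersion $\hat F\colon T\Sigma\to TM$ over $\hat u$, put $u=\pi\circ\hat u$ and $F_\sigma=\pi_*\circ\hat F_\sigma$. Injectivity of $F_\sigma$ holds since $\hat F_\sigma$ has image in $\calD_{\hat u(\sigma)}$, on which $\pi_*$ is injective, and the same two identities show that $\Omega$-isotropy of $\hat F$ yields $F^*d\mu^i=0$ for all $i$ while $\Omega$-regularity of $\hat F$ yields $(d\mu^i)$-regularity of $F$. Hence $F$ is a formal regular $(d\mu^i)$-Lagrangian immersion with base map $\pi\circ\hat u$.

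I do not expect a genuine obstacle here — the argument is a direct translation once the identity $\Omega(\xi,\xi')=(d\mu^i(\pi_*\xi,\pi_*\xi'))_i$ is in hand. The only points I would flag are: (i) the choice of lift of the base map is irrelevant, since $\calD$ is invariant under translation in the $\bbR^p$-factors and no holonomy is imposed on formal data; and (ii) the adjective ``exact'' in the statement is vacuous at the formal level — it constrains only genuine immersions, as in the discussion preceding \autoref{defn:formalBiLagrangian}, so nothing beyond the formal correspondence above needs to be checked.
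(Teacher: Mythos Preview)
Your proposal is correct and essentially identical to the paper's own proof: both lift the base map via the zero section, define the formal lift as $(\pi_*|_{\calD})^{-1}\circ F$, and transfer $\Omega$-isotropy and $\Omega$-regularity through the identity $\Omega(\xi,\xi')=\big(d\mu^i(\pi_*\xi,\pi_*\xi')\big)_i$, with the converse obtained by reversing the construction. Your remarks on the irrelevance of the choice of lift and on ``exact'' being vacuous at the formal level are accurate side observations that the paper leaves implicit.
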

\begin{proof}
	Suppose $(F,f):T\Sigma\to TN$ is a given formal, regular $(d\mu^i)$-Lagrangian map. Set, $u= (f,\underbrace{0,\ldots,0}_{p}):\Sigma\to M$. Then we can get a canonical lift $H:T\Sigma\to TM$ covering $u$, by using the fact that $d\pi:\mathcal{D}_{u(\sigma)}\to T_{f(\sigma)}N$ is an isomorphism. Therefore, $H$ is injective. We claim that $H$ is $\Omega$-regular and $(d\lambda^i)$-isotropic for $i=1,\ldots,p$ (in other words $\Omega$-isotropic). The isotropy condition follows easily, since, $$H^*d\lambda^i|_\mathcal{D} = H^* \pi^*d\mu^i|_\mathcal{D} = (d\pi|_\mathcal{D}\circ H)^*d\mu^i = F^*d\mu^i = 0,\qquad i=1,\ldots,p.$$
	
	To deduce the $\Omega$-regularity, observe that we have a commutative diagram,
	\[\begin{tikzcd}
		\mathcal{D}_{u(\sigma)} \arrow{r}{\phi} \arrow{d}[swap]{d\pi|_{u(\sigma)}} &\hom(\im H_\sigma,\mathbb{R}^p) \\
		T_{f(\sigma)}N \arrow{r}[swap]{\psi} & \hom(\im F_\sigma,\mathbb{R}^p) \arrow{u}[swap]{\big(d\pi|_{u(\sigma)}\big)^*}
	\end{tikzcd}\]
	where both the vertical maps are isomorphisms and the maps $\phi,\psi$ are given as $$\phi(v) = \big(\iota_v d\lambda^i|_{\im H}\big)_{i=1}^p, \; v\in \mathcal{D}_{u(\sigma)}, \qquad\text{and}\qquad \psi(w) = \big(\iota_w d\mu^i|_{\im F}\big)_{i=1}^p, \; w\in T_{f(\sigma)}N.$$
	Now, $(d\mu^i)$-regularity of $F$ is equivalent to surjectivity of $\psi$, which implies the surjectivity of $\phi$. Thus, the lift $H$ is a formal $\Omega$-regular isotropic $\mathcal{D}$-horizontal map.
	A similar argument proves the converse statement as well.
\end{proof}

In the case $p=2$, the pair $d\mu^1$ and $d\mu^2$ are related by a bundle isomorphism $A:TN\to TN$ as $d\mu^1(v,Aw)=d\mu^2(v,w)$. If for every $x\in N$, the operator $A_x$ has no real eigenvalue and the degree of the minimal polynomial of $A_x$ is $2$, then $\mathcal{D}$ is a degree $2$ fat distribution. In particular, if $N$ is a holomorphic symplectic manifold, then $\mathcal{D}$ is holomorphic contact distribution on $N\times\mathbb{R}^2$. 

\begin{theorem}\label{thm:hPrinExactBiLagrangian}
	Let $(N,d\mu^1, d\mu^2)$ as above. Then the exact Lagrangian immersions satisfy the $C^0$-dense $h$-principle, provided $\dim N \ge 4\dim\Sigma$.
\end{theorem}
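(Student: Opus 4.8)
The plan is to transport the $h$-principle for $\Omega$-regular $\calD$-horizontal immersions (\autoref{thm:hPrinHorizImmFatDeg2}) through the correspondence between exact Lagrangian immersions into $N$ and horizontal immersions into $M = N\times\bbR^2$. First I would check that $\calD = \ker\lambda^1\cap\ker\lambda^2$ on $M$ is a degree $2$ fat distribution. The restriction $d\pi|_\calD : \calD_{(x,z)}\xrightarrow{\ \sim\ } T_x N$ is an isomorphism intertwining $d\lambda^i|_\calD$ with $\pi^*d\mu^i|_\calD$, hence it carries the connecting automorphism of $\calD$ (defined by $d\lambda^1(u,A'v)=d\lambda^2(u,v)$) onto the connecting automorphism $A$ of $(d\mu^1,d\mu^2)$. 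By hypothesis $A_x$ has no real eigenvalue and its minimal polynomial has degree $2$, so \autoref{prop:fatEigenValue} and \autoref{defn:degreeOfFatDistribution} give that $\calD$ is degree $2$ fat, with $\rk\calD = \dim N$.

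Next I would invoke \autoref{prop:regularExactLagrangian}: on the formal level, every formal regular $(d\mu^1,d\mu^2)$-Lagrangian monomorphism $F:T\Sigma\to TN$ lifts canonically to a formal $\Omega$-regular $\calD$-horizontal monomorphism $H:T\Sigma\to TM$ covering $u=(\bs F,0,0)$, and conversely every such $H$ projects along $d\pi$ to a formal regular exact $(d\mu^i)$-Lagrangian. On the holonomic level, given a regular exact $(d\mu^1,d\mu^2)$-Lagrangian immersion $f:\Sigma\to N$ with $f^*\mu^i=d\phi^i$, the map $(f,\phi^1,\phi^2):\Sigma\to M$ is an $\Omega$-regular $\calD$-horizontal immersion, and conversely $\pi$ composed with any $\Omega$-regular $\calD$-horizontal immersion $\Sigma\to M$ is an exact regular Lagrangian immersion into $N$. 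These assignments are compatible with the $1$-jet maps, so the problem reduces to the $h$-principle for $\relHor\subset J^1(\Sigma,M)$.

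Since $\rk\calD = \dim N \ge 4\dim\Sigma$, \autoref{thm:hPrinHorizImmFatDeg2} supplies the $C^0$-dense (parametric) $h$-principle for $\relHor$. Pulling this back along the correspondence above yields the asserted $h$-principle for regular exact $(d\mu^1,d\mu^2)$-Lagrangian immersions $\Sigma\to N$, and $C^0$-density in $M$ descends to $C^0$-density in $N$ because $\pi$ is a projection. I would also record, via \autoref{rmk:horizontalNecessaryDimension}, that $\dim N\ge 4\dim\Sigma$ is precisely the constraint forced by the existence of a regular $\Omega$-isotropic subspace, so the range is optimal.

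The step that requires genuine care rather than bookkeeping is the identification between the space of (formal or holonomic) exact Lagrangian immersions $\Sigma\to N$ and the space of (formal or holonomic) horizontal immersions $\Sigma\to M$: one must verify it is a weak homotopy equivalence compatible with holonomy, i.e.\ that the ambiguity in the choice of primitives $\phi^i$ is contractible, so that not only existence but the full parametric and $C^0$-dense statements descend. This is the standard reduction used for exact Lagrangian immersions in \cite{gromovBook,eliashbergBook}, here carried out for a $2$-tuple of Liouville forms; once it is in place, the theorem follows formally from \autoref{thm:hPrinHorizImmFatDeg2}.
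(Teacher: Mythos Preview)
Your proposal is correct and follows exactly the paper's approach: the paper's proof is the single sentence ``The proof is immediate from \autoref{thm:hPrinHorizImmFatDeg2} and \autoref{prop:regularExactLagrangian},'' and you have simply unpacked this, together with the observation (made in the paragraph preceding the theorem) that $\calD$ on $N\times\bbR^2$ is degree~$2$ fat with $\rk\calD=\dim N$. Your additional remarks on the contractibility of the choice of primitives and on optimality are reasonable elaborations but are not spelled out in the paper.
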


The proof is immediate from \autoref{thm:hPrinHorizImmFatDeg2} and \autoref{prop:regularExactLagrangian}. Furthermore, an obstruction-theoretic argument as in \autoref{thm:existenceHorizImm} gives us the following corollary.
\begin{corr}\label{corr:existenceExactBiLagrangian}
	Suppose $(N,d\mu^1,d\mu^2)$ is as in \autoref{thm:hPrinExactBiLagrangian}. If $\dim N \ge \max \{4\dim\Sigma,\; 5\dim\Sigma-3\}$, then any $f:\Sigma\to N$ can be homotoped to a regular exact $(d\mu^1,d\mu^2)$-Lagrangian, keeping the homotopy arbitrarily $C^0$-small.
\end{corr}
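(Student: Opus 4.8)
The plan is to deduce the statement from the existence result for $\Omega$-regular $\calD$-horizontal immersions into degree $2$ fat distributions, \autoref{thm:existenceHorizImm}, applied to the fat distribution $\calD$ on $M=N\times\bbR^2$ associated to the pair $(\mu^1,\mu^2)$ of Liouville forms, and then to project the resulting horizontal immersion back to $N$ via $\pi:M\to N$, using the dictionary of \autoref{prop:regularExactLagrangian}.

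First I would record that, under the hypotheses of \autoref{thm:hPrinExactBiLagrangian} --- the connecting automorphism $A:TN\to TN$ of $(d\mu^1,d\mu^2)$ has no real eigenvalue and minimal polynomial of degree $2$ at every point --- the distribution $\calD=\ker\lambda^1\cap\ker\lambda^2$ on $M$, with $\lambda^i=dz_i-\pi^*\mu^i$, is a degree $2$ fat distribution with curvature $\Omega=(\pi^*d\mu^i|_\calD)$ and $\rk\calD=\dim N$. Given an arbitrary smooth map $f:\Sigma\to N$, I would consider $u_0=(f,0):\Sigma\to M$. Since $\rk\calD=\dim N\ge\max\{4\dim\Sigma,\,5\dim\Sigma-3\}$, \autoref{thm:existenceHorizImm} together with the $C^0$-dense $h$-principle \autoref{thm:hPrinHorizImmFatDeg2} behind it produces an arbitrarily $C^0$-small homotopy $u_t:\Sigma\to M$ with $u_1$ an $\Omega$-regular $\calD$-horizontal immersion. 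Concretely, the inequality $\dim N\ge 5\dim\Sigma-3$ is what lets one find a formal $\Omega$-regular horizontal immersion covering $u_0$ (the fibre $R(\dim\Sigma)$ is $(\dim N-4\dim\Sigma+2)$-connected by \autoref{lemma:connectivityOfHorizR(k)}, and a section over the $\dim\Sigma$-dimensional base exists once this exceeds $\dim\Sigma-1$), while $\dim N\ge 4\dim\Sigma$ is what makes the $h$-principle applicable.

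Finally I would set $f_t=\pi\circ u_t$. With respect to a product metric on $M$ this is again an arbitrarily $C^0$-small homotopy and $f_0=f$. Since $u_1$ is holonomic, $\Omega$-regular and $\calD$-horizontal, \autoref{prop:regularExactLagrangian} identifies $f_1=\pi\circ u_1$ with a regular $(d\mu^i)$-Lagrangian immersion, which is moreover exact: horizontality of $u_1$ gives $u_1^*\lambda^i=0$, i.e. $d(z_i\circ u_1)=f_1^*\mu^i$, so each $f_1^*\mu^i$ has the explicit primitive $z_i\circ u_1$. This finishes the argument. I do not expect a serious obstacle: all the analytic and obstruction-theoretic work is packaged inside \autoref{thm:existenceHorizImm}, and the only point to verify by hand is that the projection $\pi$ sends $\Omega$-regular $\calD$-horizontal (holonomic) immersions to regular exact $(d\mu^i)$-Lagrangian immersions and does not increase $C^0$-distances, which is exactly the content of \autoref{prop:regularExactLagrangian}.
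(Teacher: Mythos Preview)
Your proposal is correct and follows essentially the same route the paper indicates: lift $f$ to $u_0=(f,0):\Sigma\to M=N\times\bbR^2$, invoke \autoref{thm:existenceHorizImm} (whose obstruction-theoretic input is exactly \autoref{lemma:connectivityOfHorizR(k)} together with the $C^0$-dense $h$-principle of \autoref{thm:hPrinHorizImmFatDeg2}), and then project via $\pi$ using the dictionary around \autoref{prop:regularExactLagrangian}. The paper's one-line justification ``an obstruction-theoretic argument as in \autoref{thm:existenceHorizImm}'' is precisely this, and your explicit check that $z_i\circ u_1$ furnishes the primitive for $f_1^*\mu^i$ is the exactness statement recorded in the paragraph preceding \autoref{prop:regularExactLagrangian}.
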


\begin{remark}
	The above corollary improves upon the result in \cite{dattaSymplecticPair}, where the author proved the existence of regular, exact $(d\mu^1,d\mu^2)$-Lagrangian immersions $\Sigma\to N$, under the condition $\dim N \ge 6\dim \Sigma$.
\end{remark}

In the case $p=3$, let us assume that we have triple of symplectic forms $(d\mu^1,d\mu^2,d\mu^3)$ on a Riemannian manifold $(N,g)$. Then, we have the automorphisms $J_i : TN\to TN$ defined by $g(v, J_i w) = d\mu^i(v,w)$. If we assume that $\{J_1, J_2, J_3\}$ satisfies the quaternionic relation at each point of $N$, then $\mathcal{D}$ is quaternionic contact structure. In particular, if $N$ is hyperk\"ahler then $\mathcal{D}$ is a Quaternionic contact distribution on $N\times \mathbb{R}^3$ \cite{boyerSasakianGeometry}. Given \autoref{prop:regularExactLagrangian}, we have the direct corollary to \autoref{thm:existenceHorizImmQCont}.

\begin{corr}\label{corr:existenceTriLagrangian}
	Let $(N,g, d\mu^i, i=1,2,3)$ as above. Then, there exists an exact $(d\mu^i)$-Lagrangian immersion $\Sigma\to N$, provided $\dim N \ge \max \{4\dim \Sigma + 4, \; 5\dim \Sigma - 3\}$.
\end{corr}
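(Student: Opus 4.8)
The plan is to deduce this directly from \autoref{thm:existenceHorizImmQCont} via the correspondence between exact $(d\mu^i)$-Lagrangian immersions into $N$ and $\calD$-horizontal immersions into $M = N \times \bbR^3$. First I would fix the distribution $\calD = \bigcap_{i=1}^3 \ker \lambda^i$ on $M$, with $\lambda^i = dz_i - \pi^*\mu^i$ and $\pi : M \to N$ the projection, and recall from the discussion preceding \autoref{defn:formalBiLagrangian} that under the quaternionic relations on $\{J_1, J_2, J_3\}$ the distribution $\calD$ is a quaternionic contact structure on $M$. Since $\cork \calD = 3$ and $\dim M = \dim N + 3$, we have $\rk \calD = \dim N$, so the hypothesis $\dim N \ge \max\{4\dim\Sigma + 4,\ 5\dim\Sigma - 3\}$ is precisely the numerical condition $\rk \calD \ge \max\{4\dim\Sigma + 4,\ 5\dim\Sigma - 3\}$ demanded by \autoref{thm:existenceHorizImmQCont}.

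Next I would take any smooth map $\Sigma \to M$ (say $\sigma \mapsto (f(\sigma), 0, 0, 0)$ for a chosen $f : \Sigma \to N$, or simply a constant map) and apply \autoref{thm:existenceHorizImmQCont} to homotope it to a $\calD$-horizontal immersion $g : \Sigma \to M$. By \autoref{prop:qContOmegaRegular}, every horizontal immersion into a quaternionic contact structure is automatically $\Omega$-regular, so $g$ is $\Omega$-regular, and hence by the converse half of \autoref{prop:regularExactLagrangian} the projection $\pi \circ g : \Sigma \to N$ is a regular exact $(d\mu^i)$-Lagrangian immersion. This is the desired immersion. (With a little extra care one also gets the homotopy refinement in the style of \autoref{corr:existenceExactBiLagrangian}: starting from $u = (f, 0, 0, 0)$ and exploiting the $C^0$-smallness of the homotopy in $M$, the projection to $N$ remains $C^0$-close to $f$.)

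I do not expect any real obstacle: all the analytic substance is already absorbed into \autoref{thm:hPrinHorizImmQCont}/\autoref{thm:existenceHorizImmQCont} and into the dictionary \autoref{prop:regularExactLagrangian}, so the argument is a translation of hypotheses. The only point to verify carefully is that $\pi \circ g$ is \emph{exact} Lagrangian (not merely isotropic) for each $d\mu^i$: this is the elementary observation that the coordinate functions $z_i \circ g$ are primitives, $(\pi \circ g)^*\mu^i = d(z_i \circ g)$, which holds because $g^*\lambda^i = 0$ and $\lambda^i = dz_i - \pi^*\mu^i$.
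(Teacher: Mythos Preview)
Your proposal is correct and follows exactly the paper's own argument: the paper simply states that the corollary is a direct consequence of \autoref{thm:existenceHorizImmQCont} via the dictionary in \autoref{prop:regularExactLagrangian}, and your write-up spells out precisely this deduction (including the verification that $\rk\calD=\dim N$ and the exactness via $z_i\circ g$).
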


\bibliographystyle{alphaurl}

\end{document}